\documentclass[10pt, reqno, final]{amsart}
\usepackage{etoolbox}

\makeatletter
\patchcmd{\@settitle}{\uppercasenonmath\@title}{}{}{}
\patchcmd{\@setauthors}{\MakeUppercase}{}{}{}
\makeatother

\title[The method of stochastic characteristics]{The method of stochastic characteristics for linear second-order hypoelliptic equations} 
\author[J. F\"{o}ldes, D. P. Herzog]
     {Juraj F\"{o}ldes, David P. Herzog\\
  \scriptsize{email: foldes@virginia.edu, 
             dherzog@iastate.edu}
}

\usepackage{amsfonts, amssymb, amsmath, amsthm,enumerate,esint,multicol, mathptmx}
\usepackage[colorlinks=true, pdfstartview=FitV, linkcolor=blue,
            citecolor=blue, urlcolor=b?lue]{hyperref}
\usepackage[usenames]{color}
\usepackage{xcolor}
\definecolor{Red}{rgb}{0.7,0,0.1}
\definecolor{Green}{rgb}{0,0.7,0}
\usepackage{accents}
\usepackage{comment}
\usepackage{graphicx}
\usepackage[capitalize,nameinlink,noabbrev]{cleveref}

\usepackage{hyperref}

\colorlet{darkblue}{blue!50!black}

\hypersetup{
    colorlinks,
    citecolor=darkblue,
    filecolor=red,
    linkcolor=darkblue,
    urlcolor=blue,
    pdfnewwindow=true,
    pdfstartview={FitH}
}

\pagestyle{myheadings}
\numberwithin{equation}{section}

\newtheorem{Theorem}{Theorem}[section]
\newtheorem{Proposition}[Theorem]{Proposition}
\newtheorem{Lemma}[Theorem]{Lemma}
\newtheorem{Corollary}[Theorem]{Corollary}

\theoremstyle{remark}
\newtheorem{Remark}[Theorem]{Remark}
\theoremstyle{definition}
\newtheorem{Definition}[Theorem]{Definition}
\theoremstyle{definition}
\newtheorem{Example}[Theorem]{Example}
\theoremstyle{definition}

\newcommand{\PP}{\mathbf{P}}
\newcommand{\LL}{\mathcal{L}}
\newcommand{\B}{\mathcal{B}}
\newcommand{\N}{\mathbf{N}}

\newcommand{\s}{\text{stoc}}

\newcommand{\E}{\mathbf{E}}

\newcommand{\RR}{\mathbf{R}}
\newcommand{\X}{\mathcal{X}}

\begin{document}
\maketitle

\begin{abstract}

We study hypoelliptic stochastic differential equations (SDEs) and their connection to degenerate-elliptic boundary value problems on bounded or unbounded domains.  In particular, we provide probabilistic conditions that guarantee  that the formal stochastic representation of a solution is smooth on the interior of the domain and continuously approaches the prescribed boundary data at a given boundary point.  The main general results are proved using  fine properties of the process stopped at the boundary of the domain combined with hypoellipticity of the operators associated to the SDE. The main general results are then applied to deduce properties of the associated Green's functions and to obtain a generalization of Bony's Harnack inequality.  We moreover revisit the transience and recurrence dichotomy for hypoelliptic diffusions and its relationship to invariant measures.   
\end{abstract}

\section{Introduction}

\subsection{Overview}
Let $\mathfrak{m}\in \N$ and $U\subset \RR^\mathfrak{m}$ be nonempty, open set with nonempty boundary $\partial U$.  Let $\X\subset \RR^\mathfrak{m}$ be an open set containing $\overline{U}=U\cup \partial U$ and let $M_{\mathfrak{m} \times r}$ denote the set of $\mathfrak{m} \times r$ matrices with entries in $\RR$.  Consider a linear second-order differential operator of the form
\begin{align}
\label{eqn:gen}
L= \sum_{i=1}^\mathfrak{m} (b(x))_i \frac{\partial}{\partial x_i} +\frac{1}{2}\sum_{i,j=1}^\mathfrak{m} (\sigma(x) \sigma(x)^T)_{i,j} \frac{\partial^2}{\partial x_i \partial x_j} \,,
\end{align}
where $b\in C^\infty(\X; \RR^\mathfrak{m})$ and $\sigma \in C^\infty(\X; M_{\mathfrak{m} \times r})$.  In this paper, we study the formal stochastic representation $u_\s$ (see~\eqref{eqn:ustoch} below) corresponding to the combined Dirichlet and Poisson problems for $L$ on $U$:
\begin{align}
\label{eqn:pp}
\begin{cases}
L u = -f &\text{ on } \,\,U \,, \\
u(x) \rightarrow g(x_*) & \text{ as } x\rightarrow x_* \in \partial U, \,x\in U \,,
\end{cases}
\end{align} 
where $f\in C^\infty(U)$ and $g$ is continuous in a neighborhood of $\partial U$. Importantly, our assumptions allow for $U$ to be unbounded and for $L$ to be \emph{hypoelliptic}\footnote{The operator $L$ is called \emph{hypoelliptic on} $U$ if for all distributions $v$ on $U$ with $Lv \in C^\infty(V)$ for some open $V\subset U$, we have $v\in C^\infty(V)$.  }  on $U$.

Formally, the method of stochastic characteristics (see~\cref{sec:stchar} below) produces the formula $u_\s$ as a candidate expression for a classical solution of~\eqref{eqn:pp}. 
However,  in this setting, $u_\s$ may not be defined let alone be a classical solution of~\eqref{eqn:pp}.  The goal of this paper is to provide practical, probabilistic conditions under which $u_\s$ 
is well defined and satisfies the properties required by the problem~\eqref{eqn:pp}:
\begin{itemize}
\item $u_\s\in C^2(U)$ and $Lu_\s=f$ on $U$ in the classical sense;
\item $u(x)\rightarrow g(x_*)$ as $x\rightarrow x_* \in \partial U, \,x\in U$.
\end{itemize}
Results for classical well-posedness of~\eqref{eqn:pp} in the setting of a bounded domain $U$ are also  obtained.  

Although we employ tools from analysis, the manuscript primarily focuses on probabilistic methods.  One may compare our work with the classical work of Oksendal~\cite[Chapter 9]{Oksen_13} adapted 
to our setting, but we aim to be more self-contained on this particular topic.  Specifically, we start from a probabilistic construction of fundamental solutions and build up to a solution theory for the boundary-value problem~\eqref{eqn:pp}.  
As a consequence, one may also compare this work to a portion of the classical elliptic theory text of Gilbarg and Trudinger~\cite{GT_15}, but we use probabilistic techniques in the possibly unbounded $U$, hypoelliptic $L$ setting.

\subsection{The method of stochastic characteristics}
\label{sec:stchar}
In order to employ probabilistic techniques, we associate to $L$ a diffusion process $x_t$ on $\X$ with infinitesimal generator $L$.  That is, we suppose that $x_t$ satisfies an It\^{o} stochastic differential equation (SDE) of the form
\begin{align} 
\label{eqn:sde}
dx_t &= b(x_t ) \, dt + \sigma(x_t) \, d\mathbf{W}_t\,,
\end{align}
where $\mathbf{W}_t=(W_t^1, W_t^2, \ldots, W_t^r)^T$ is a standard, $r$-dimensional Brownian motion defined on, and adapted to, a filtered probability space $(\Omega, \mathcal{F}, \mathcal{F}_t, \PP, \E)$. The \emph{formal stochastic representation} $u_\s$ corresponding to the problem~\eqref{eqn:pp} then has the form 
\begin{align}
\label{eqn:ustoch}
u_\s(x) = \E_x \int_0^\tau f(x_s) \, ds + \E_x g(x_\tau), \qquad \, x\in U , 
\end{align}
where 
\begin{align}\label{def:tau}
\tau= \inf\{ t > 0 \,: \, x_t \notin U\}
\end{align}  
is the first positive exit time from $U$. Note that in~\eqref{eqn:ustoch}, the symbol $\E_x$ means the expected value for the law $\PP_x$ of the process $x_t$ with $x_0=x$.  
The formula~\eqref{eqn:ustoch} is formally derived using Dynkin's/It\^{o}'s formula~\eqref{eqn:Dynkin} applied to a  sufficiently \emph{nice} classical solution of~\eqref{eqn:pp}, which may 
or may not exist in this setting.  Specifically, in Dynkin's formula~\eqref{eqn:Dynkin} below, formally set $\phi(t,x)=u(x)$ and $\sigma= \tau$ to arrive at~\eqref{eqn:ustoch}.  Compared with the usual method of characteristics for first-order partial differential equations, this is the analogous method for linear second-order equations, but this time the characteristic trajectories $x_t$ are random.

Aside from it being unclear its precise relationship to~\eqref{eqn:pp}, observe  that the expression $u_\s$ in~\eqref{eqn:ustoch} is  itself formal in two ways.  First, depending on the set $U$ and the coefficients $b, \sigma$, both the process $x_t$ and the expected values in~\eqref{eqn:ustoch} may not be defined, so $u_\s$ in turn is not defined.  Second, even if $u_\s$ is defined on $U$, it may not satisfy the equation~\eqref{eqn:pp} in the classical sense.  
However, employing this formal argument via Dynkin's formula~\eqref{eqn:Dynkin},  $u_\s$ is a \emph{best guess} at a solution of~\eqref{eqn:pp}, and so it is natural to study $u_\s$ in relation to~\eqref{eqn:pp}.  

If the operator $L$ is hypoelliptic on $U$, then it has a smoothing effect reminiscent of second-order uniformly elliptic operators on $U$ with smooth coefficients.  However, even if $U$ is a bounded domain with boundary $\partial U$ satisfying the \emph{exterior cone condition}\footnote{$\partial U$ satisfies the \emph{exterior cone condition} if for all $x_* \in \partial U$ there exists a basis $x_1, x_2, \ldots, x_\mathfrak{m}$ of $\RR^{\mathfrak{m}}$ and a parameter $\lambda_*>0$ such that 
$\text{Cone}(x_*, \lambda_*):= \{ x_* + \textstyle{\sum}_{i=1}^\mathfrak{m} \lambda_i x_i \, : \, \lambda_i \in (0, \lambda_*) \}\subset \overline{U}^c $.}, 
there are many examples where the problem~\eqref{eqn:pp} is ill-posed in the classical sense (see \cref{ex:non} below).  
This is different compared to equation~\eqref{eqn:pp} with is uniformly elliptic operator $L$ of the form~\eqref{eqn:gen}.  From a probabilistic perspective, this difference can be explained intuitively,  especially as it relates to  satisfaction of the  boundary condition in equation~\eqref{eqn:pp}.     
Indeed, if $L$ is uniformly elliptic, then the noise in~\eqref{eqn:sde} is present in each direction of the equation, and therefore for short times the particle visits all points in a small ball.  See \cite{CFH_21} for a rigorous formulation of this statement.  This is, however, not necessarily the case for a general hypoelliptic diffusion.  In particular, even if the process $x_t$ initiated at any point $x\in U$ is defined, it may not hit certain portions of the boundary with positive probability, and consequently the values of $g$ on these portions
do not influence $u_{\s}$ in~\eqref{eqn:ustoch}. 
Moreover, when started on the boundary $\partial U$, the process may have a positive probability of re-entering $U$ prior to exiting, so that $u_\s$ may not satisfy the boundary condition.

\subsection{Previous results and layout of the paper}

Despite these issues in  the hypoelliptic setting, understanding when the equation or boundary conditions in~\eqref{eqn:pp} are satisfied in the classical sense by $u_\s$ is of notable importance, as hypoelliptic operators, and their corresponding boundary-value problems like~\eqref{eqn:pp}, play a key role in a number of problems in science and engineering.  See, for example, hypoelliptic diffusions arising in finite-dimensional models of turbulence~\cite{Bec_05, Bec_07, Bed_20, BHW_12, EMat_01, FGHH_20, GHW_11, HN_21, Rom_04} or in statistical mechanics and machine learning~\cite{Cam_21, Cun_18, Gia_19, HerMat_19, LSS_20, Stoltz_10}.  
Direct applications also occur in the ergodic theory of SDEs, where functionals of return times to compact sets can be seen as the formal stochastic representations corresponding 
to equations of the form~\eqref{eqn:pp}.  Such functionals are essential to understand precise rates of convergence to equilibrium~\cite{HM_11, RB_06, EGZ_19}.   

Historically, various aspects of the problem~\eqref{eqn:pp} in the hypoelliptic setting have drawn interest from researchers dating back to Kolmogorov \cite{Kol_34}, who gave the first known example of a hypoelliptic diffusion that is not uniformly elliptic.  Later, the seminal work of H\"{o}rmander \cite{Hor_67} provided an efficient tool to determine hypoellipticity of $L$ by locally calculating the Lie algebra of vector fields 
that create the operator $L$ in the form~\eqref{eqn:gen},  see \cref{sec:hypo} for further information.  For classical well-posedness of the problem~\eqref{eqn:pp}, the pioneering work of Bony~\cite{Bony_69} is fundamental.  There, Bony gives conditions on a bounded open set 
$U$ and a hypoelliptic operator $M$ of the form $M=L-a$, where $a\in C^\infty(U)$ is positive and bounded away from zero on $U$, so that the problem~\eqref{eqn:pp}
with $M$ replacing $L$  is well-posed  in the classical sense.  Apart from the presence of the positive function $a$, which in particular aides in the existence part of the problem~\eqref{eqn:pp}, 
 a critical assumption guaranteeing continuity on $\overline{U}$ is that at every point on $\partial U$, the noise in the equation points in the normal direction.  
 Such an assumption is often not satisfied for many hypoelliptic operators of interest,
  for example if the noise is additive (that is, spatially constant), see~\cite{Bed_20, BHW_12, Cam_21, Cun_18, EMat_01, FGHH_20, GHW_11, HerMat_19, HN_21, LSS_20, Rom_04}.  
Thus, it is natural to investigate if classical well-posedness holds under weaker assumptions. 

More recently, the hypoelliptic Dirichlet problem ($f\equiv 0$ in~\eqref{eqn:pp}) in a bounded domain $U$ was revisited in~\cite{Ram_97}, where Bony's result was extended to operators $L$ of the form~\eqref{eqn:gen}; that is, with  $a \equiv 0$, assuming $L$ satisfies a maximum principle and again that at every point on $\partial U$ there is noise pointing in the normal direction.  
Similar to Bony's work, the results in~\cite{Ram_97} fail to apply in many problems of interest.  
Perhaps more closely related to the present paper is the work of  Kogoj~\cite{Kog_17}, which uses analytic methods from potential theory to establish a cone-type criterion for the existence of a generalized solution, in the sense of Perron-Wiener, of the Dirichlet problem for a hypoelliptic $L$ in parabolic form. 
More importantly, \cite{Kog_17} assumes the existence of a \emph{well-behaved} fundamental solution and that the diagonal of $\sigma(x) \sigma(x)^T$ never vanishes.  
Note that probabilistically the cone-type condition is natural as it gives particle a space to exit the domain, and indeed for uniformly elliptic operators, such condition is sufficient 
to guarantee boundary regularity 
(in the potential-theoretic sense). As described above, additional conditions are required in hypoelliptic setting, and the supplementary criteria provided in   \cite{Kog_17}
 appear difficult to check in concrete scenarios.  We refer the reader to~\cite{GS_90, LTU_17,NS_87} for earlier, related results.  We also refer to the recent work of the authors~\cite{CFH_21} which provides 
probabilistic methods for determining regular points  (defined below in \cref{sec:boundary}) on the boundary using the theory of large deviations and  laws  of the iterated logarithm.
Note that~\cite{CFH_21} was partly inspired by the work of Lachal~\cite{Lac_97}, where a functional law of the iterated logarithm was obtained for the iterated Kolmogorov stochastic differential equation.   
 However, applications to solvability of~\eqref{eqn:pp} were not discussed in detail in~\cite{CFH_21} or~\cite{Lac_97}.

Another goal of the present work is to provide a relatively self-contained presentation on the topic in this paper, referring primarily to graduate-level textbooks to obtain needed results.  Our hope is that the level of the paper is similar to the level in Oksendal's book~\cite{Oksen_13}, and that the interested graduate student who has had an introductory course in stochastic analysis will find the paper readable.  We did this for two reasons.  First, prior to starting this project, at times we found it difficult to locate results for~\eqref{eqn:pp} in the hypoelliptic setting and this sentiment was also confirmed in conversations with colleagues.  Second, there were claims in the literature 
about~\eqref{eqn:pp} that seemed both correct and intuitive, but we could not locate a proof in the existing literature.  Thus, we hope this paper will serve as a resource 
to which the (stochastic) analysis community can refer as needed, since either there is a physical proof in the paper or the statement is easy to locate in the literature using the provided references.         

The organization of this paper is as follows.  In \cref{sec:notation}, we introduce further notation, assumptions, terminology and make preliminary remarks about hypoellipticity.  
H\"{o}rmander's theorem, in particular, will be discussed in detail in \cref{sec:notation}.  In \cref{sec:boundary}, we elaborate on the importance of boundary behavior for $x_t$ as its relates to solvability 
of~\eqref{eqn:pp} in the classical sense.  In particular, in  \cref{sec:boundary} one can find our definition of \emph{regular} and \emph{irregular} points on $\overline{U}$. \cref{sec:closed} focuses on 
 special equations of the form~\eqref{eqn:pp}, in particular those involving functionals of $\tau$, and these special equations are used later to formulate our main  general results in \cref{sec:poisson2}. 
\cref{sec:poisson2} provide conditions on $L$, $\tau$, and $u_\s$ that guarantee $L u_\s=-f$ on $U$ in the sense of distributions, hence in the classical sense if $L$ is hypoelliptic and $f$ smooth.   Green's functions and a generalization of Bony's form of the Harnack inequality~\cite[Theorem 7.1]{Bony_69} are discussed in \cref{sec:green}, while in \cref{sec:recurrence}, as an application of these results, we re-derive the transience and recurrence dichotomy for degenerate diffusions.

\section{Notation and Preliminary Remarks}
\label{sec:notation}

In this section, we introduce notation, assumptions and terminology used throughout the paper.  We also make a few preliminary remarks.  First, we fix some standard notation.

\subsection{Basic notation} For Borel sets $V,V_1 \subset \RR^k$, $V_2 \subset \RR^n$, and $W\subset \RR^\ell$,  we use the following notation.
\begin{itemize}
\item[--] $C(V; W)$ denotes the set of continuous functions $\phi:V\to W$;

\item[--] $C^j(V; W)$, $j\geq 1$, denotes the set of $j$-times continuously differentiable functions $\phi: V\to W$;  

\item[--] $C^\infty(V; W)= \bigcap_{j=1}^\infty C^j(V; W)$; 

\item[--] $C^{j_1,j_2}(V_1 \times V_2; W)$ denotes the set of functions $\phi = \phi(x,y): V_1 \times V_2 \to W$ which are $j_1$-times continuously differentiable in $x$ and $j_2$-times continuously differentiable in $y$; 

\item[--] $B(V; W)$ denotes the set of bounded, Borel measurable functions $\phi: V\to W$; 
 
\item[--] In any of the above functions spaces, a subscript of $0$ indicate that the function is moreover compactly supported in its domain of definition, e.g. $C_0^j$ versus $C^j$;

\item[--] For $1\leq p \leq \infty$, $L^p(V)$ denotes the set of measurable functions $\phi: V\to \RR$ such that $\| \phi\|_{L^p(V)}^p:= \int_V |\phi |^p \, dx < \infty$ if $p\in [1, \infty)$ and $\|\phi \|_{L^\infty(V)}:= \text{ess sup}_{x\in V} | \varphi(x)|< \infty$ if $p=\infty$;  

\item[--] $\text{Lip}(V;W) $ denotes the set of functions $\phi: V \to W$ which are Lipschitz continuous on $V$; 

\item[--] For any $s\in \RR$, $H^s(\RR^k)$ denotes the usual Sobolev space $W^{s,2}(\RR^k)$;  
  
\item[--] If the target $W$ is clear from context or not important, we write $C^j(V)=C^j(V;W)$, $C_0^j(V)=C_0^j(V; W)$, $B(V)=B(V;W)$, etc.  When the context is clear, we may also drop $V$ and write $C^j, C_0^j$, etc.  

\item[--] $\B_V$ denotes the set of Borel measurable subsets of $V$;  

\item[--] $\B$ denotes the set of Borel measurable subsets of the open set $\X$, where $\X$ is as in the introduction.   

\item[--] For any $B\in \B$ bounded with $\overline{B} \subset \X$, we define 
\begin{align}
\label{eqn:tauB}
\tau_B= \inf\{ t>0 \, : \, x_t \in B^c\}, 
\end{align}
where $\inf \emptyset: =\infty$.

\item[--] We define a sequence $\X_n \subset \X$, $n \geq 1$ of bounded open sets with $\overline{\X_n} \subset \X_{n+1}$ for all $n$ and $\bigcup_{n=1}^\infty \X_n =\X$.  

\item[--] At times, we will also need a sequence $U_n \subset U$, $n\geq 1$, of bounded  open sets with $\overline{U_n} \subset U_{n+1}$ and $\bigcup_{n=1}^\infty U_n  = U$.  

\item[--] We define
\begin{align}
\label{eqn:exitpos}
\tau=\tau_U:= \lim_{k\to \infty} \tau_{U\cap \X_k}. 
\end{align}
and 
\begin{align}
\label{eqn:tauX}
\tau_\X= \lim_{k\to \infty} \tau_{\X_k}.  
\end{align}
\end{itemize}

\subsection{Nonexplosivity of $x_t$}Depending on the behavior of the coefficients $b$ and $\sigma$, the solution $x_t$ of equation~\eqref{eqn:sde} evolving on the neighborhood $\X$ of $\overline{U}$ is only
 a priori defined locally in time until the process exits $\X$.  That is, by the standard existence and uniqueness theorem for SDEs~\cite{Oksen_13}, for any initial condition $x \in \X$ at time $t=0$, 
 equation~\eqref{eqn:sde} has a unique (pathwise) solution $x_t$ for all times $0\leq t< \tau_\X$.  However, throughout the paper, we will assume that $\tau_\X$ is almost surely infinite as in the following definition.   

\vspace{0.1in}

\begin{Definition}
\label{def:nonexpl}
We say that the process $x_t$ is \emph{non-explosive} if \begin{align}\label{eqn:nonexpl}
\PP_x\{  \tau_\X< \infty \} =0 \,\,\,\text{ for all } x \in \X,  
\end{align}
where $\PP_x$ denotes the probability $\PP$ but indicates that $x_0=x$. \end{Definition}

Nonexplosivity implies that for all initial conditions $x\in \X$, equation~\eqref{eqn:sde} has a unique pathwise solution $x_t$ defined on $\X$ for all finite times $t\geq 0$ almost surely.  Furthermore, $t\mapsto x_t:[0, \infty) \to \X$ is continuous almost surely.      

 \begin{Remark}
 \label{rem:nonexpl}
Suppose that $U$ is bounded. Since $b$ and $\sigma$ are  smooth and defined on an open neighborhood of $\overline{U}$, 
one can extend $b$ and $\sigma$ to functions $\hat{b}\in C^\infty(\RR^\mathfrak{m}; \RR^\mathfrak{m})$ and $\hat{\sigma}\in C^\infty(\RR^\mathfrak{m}; M_{\mathfrak{m} \times r})$, respectively, such that $\hat{b}=b$ on $\overline{U}$ and $\hat{\sigma}=\sigma$ on $\overline{U}$ and such that $\hat{b}, \hat{\sigma}$ both have bounded derivatives of all orders, e.g. extend $b$, $\sigma$ by zero outside of a larger neighborhood and mollify.  
Consequently, replacing $\X$ with $\RR^\mathfrak{m}$ and $b, \sigma$ in \eqref{eqn:sde} with $\hat{b}, \hat{\sigma}$ we find that the resulting solution $\hat{x}_t$ is nonexplosive.  
Note that the problem~\eqref{eqn:pp} and its formal stochastic representation~\eqref{eqn:ustoch}  remain unchanged regardless of the chosen extension, so nonexplosivity is implicitly present.  On the other hand, if $U$ is unbounded, then one must verify nonexplosivity separately, typically using Lyapunov-type methods.  See \cref{lem:nonexpl} below for further details.           
\end{Remark}

By nonexplosivity, $x_t$ is a well-defined Markov process on $\X$. It thus induces a corresponding Markov semigroup $\{\mathcal{P}_t\}_{t\geq 0}$ which acts on functions $\phi\in B( \X; \RR)$ via 
\begin{align}
\label{eqn:semif}
\mathcal{P}_t \phi(x) := \E_x \phi(x_t)
\end{align}
and dually on $\B$-measures $\nu$ via 
\begin{align}
\label{eqn:semim}
\nu \mathcal{P}_t (B)= \int_{\X} \mathcal{P}_t \mathbf{1}_B(x) \nu(dx), \,\, B\in \B.
\end{align}
We let 
\begin{align}
\mathcal{P}_t(x, B):= \mathcal{P}_t \mathbf{1}_B(x), \,\,\,\, \,\,\,\, B\in \B, \, x\in \X, 
\end{align}
denote the corresponding Markov transitions.  
   
Note that if $L$ is as in~\eqref{eqn:gen} and $\phi \in C^{1,2}_0([0, \infty) \times \X; \RR)$, then \emph{Dynkin's formula}
\begin{align}
\label{eqn:Dynkin}
\E_x \phi(\sigma, x_\sigma) = \phi(0, x) + \E_x\int_0^\sigma ( \partial_t + L)\phi(s, x_s) \, ds 
\end{align}
 holds for any bounded stopping time $\sigma$ with respect to the filtration $\mathcal{F}_t$.  Dynkin's formula allows one to study various properties of $L$ by analyzing path properties of the stochastic process $x_t$.  

\begin{Remark}
\label{rem:dynkin}
By a  standard localization procedure applied to the equation~\eqref{eqn:sde} on $\X_{k+1}$, the same formula~\eqref{eqn:Dynkin} holds for any bounded stopping time $\sigma \leq \tau_{\X_k}$ and any $\phi\in C^{1,2}([0, \infty) \times \X_{k+1})$ regardless if $x_t$ is nonexplosive.  
\end{Remark}

As a simple application of Dynkin's formula, we briefly recall the following basic method for checking nonexplosivity.  See also~\cite{Khas_11, MT_93, RB_06}.    
\begin{Lemma}
\label{lem:nonexpl}
Suppose there exists $w\in C^2(\X; [0, \infty))$ and constants $C, D>0$ such that 
\begin{align*}
w_k := \inf_{x\in \partial \X_k} w(x) \to \infty \,\, \text{ as }\,\, k\to \infty \qquad \text{ and } \qquad L w \leq Cw+D \,\, \text{ on } \,\, \X.  
\end{align*}
Then $x_t$ is nonexplosive as in~\eqref{eqn:nonexpl}.  
\end{Lemma}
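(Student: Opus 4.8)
The plan is to use the classical Lyapunov/supermartingale argument built on Dynkin's formula. Fix $x\in\X$ and work with the stopping times $\tau_{\X_k}$, which increase to $\tau_\X$. The function $w$ is $C^2$ on $\X$ but need not be globally nice, so I would first localize: on each $\X_{k+1}$ the process stopped at $\tau_{\X_k}$ is controlled and Dynkin's formula applies in the form of \cref{rem:dynkin}. To handle the linear growth $Lw\le Cw+D$, the standard trick is to pass to the function $\phi(t,x)=e^{-Ct}\bigl(w(x)+D/C\bigr)$ (or $e^{-Ct}(w(x)+Dt)$ if one prefers to avoid dividing by $C$); then a direct computation gives
\begin{align*}
(\pd_t+L)\phi(t,x)=e^{-Ct}\bigl(Lw(x)-Cw(x)-D\bigr)\le 0 \quad\text{on }\X.
\end{align*}

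Next I would apply Dynkin's formula \eqref{eqn:Dynkin} (justified via \cref{rem:dynkin}) with this $\phi$ and the bounded stopping time $\sigma=t\wedge\tau_{\X_k}$ for fixed $t>0$ and $k\ge 1$. Since $(\pd_t+L)\phi\le 0$, this yields
\begin{align*}
\E_x\,\phi\bigl(t\wedge\tau_{\X_k},\,x_{t\wedge\tau_{\X_k}}\bigr)\le \phi(0,x)=w(x)+\tfrac{D}{C}.
\end{align*}
On the event $\{\tau_{\X_k}\le t\}$ we have $x_{t\wedge\tau_{\X_k}}=x_{\tau_{\X_k}}\in\partial\X_k$, so $w(x_{\tau_{\X_k}})\ge w_k$, and hence $\phi(t\wedge\tau_{\X_k},x_{t\wedge\tau_{\X_k}})\ge e^{-Ct}(w_k+D/C)\,\indFn{\{\tau_{\X_k}\le t\}}$. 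Dropping the contribution from $\{\tau_{\X_k}>t\}$ (which is nonnegative since $w\ge 0$) gives
\begin{align*}
e^{-Ct}\Bigl(w_k+\tfrac{D}{C}\Bigr)\,\PP_x\{\tau_{\X_k}\le t\}\le w(x)+\tfrac{D}{C},
\end{align*}
that is, $\PP_x\{\tau_{\X_k}\le t\}\le e^{Ct}\,\frac{w(x)+D/C}{w_k+D/C}$.

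Finally I would let $k\to\infty$: since $\tau_{\X_k}\uparrow\tau_\X$, the events $\{\tau_{\X_k}\le t\}$ decrease to $\{\tau_\X\le t\}$ (up to the boundary event of equality, which is handled by monotone continuity of measure), and $w_k\to\infty$ by hypothesis, so $\PP_x\{\tau_\X\le t\}=0$ for every $t>0$. Taking $t\to\infty$ through integers and using countable subadditivity gives $\PP_x\{\tau_\X<\infty\}=0$ for all $x\in\X$, which is exactly \eqref{eqn:nonexpl}. I do not anticipate a serious obstacle here; the only point demanding care is the justification that Dynkin's formula may be applied on the unbounded set $\X$ to a function $\phi$ that is not compactly supported — this is precisely what the localization in \cref{rem:dynkin} supplies, since $t\wedge\tau_{\X_k}\le\tau_{\X_k}$ and $\phi$ restricted to $\overline{\X_{k+1}}$ is a bona fide $C^{1,2}$ function. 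A secondary minor point is the measure-theoretic limit $\{\tau_{\X_k}\le t\}\downarrow\{\tau_\X\le t\}$, which follows from $\tau_{\X_k}\uparrow\tau_\X$ together with the fact that the bound already forces $\PP_x\{\tau_{\X_k}\le t\}\to 0$, so the limiting event has probability zero regardless of how the endpoint is treated.
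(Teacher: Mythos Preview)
Your proof is correct and follows essentially the same Lyapunov/supermartingale argument as the paper's own proof: both apply Dynkin's formula (localized via \cref{rem:dynkin}) to an exponentially-damped version of $w$ at the bounded stopping time $t\wedge\tau_{\X_k}$, extract the lower bound $w_k$ on $\{\tau_{\X_k}\le t\}$, and let $k\to\infty$. The only cosmetic difference is that you absorb the constant $D$ into $\phi(t,x)=e^{-Ct}(w(x)+D/C)$ up front so that $(\partial_t+L)\phi\le 0$, whereas the paper applies Dynkin to $e^{-Ct}w(x)$ directly and carries the term $\int_0^t e^{-Cs}D\,ds$ through the estimate; both routes produce the same bound $\PP_x\{\tau_{\X_k}\le t\}\le e^{Ct}\cdot\text{const}/w_k$.
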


\begin{proof}
\cref{rem:dynkin}, nonnegativity of $w$ and Dynkin's formula together imply that if $\tau_k= \tau_{\X_k}$, then 
\begin{align*}
w_k e^{-Ct}\PP_x \{ \tau_k \leq t \} &\leq \E_x  w(x_{t\wedge \tau_k}) e^{-C(t\wedge \tau_k)}  \mathbf{1}_{\{\tau_k < t\}}    \leq \E_x w(x_{t\wedge \tau_k}) e^{-C(t\wedge \tau_k)} \\
&= w(x) + \E_x \int_0^{t\wedge \tau_k} - Cw(x_s) e^{-Cs } + e^{-Cs} L w(x_s) \, ds\\
&\leq w(x)+\int_0^t e^{-Cs} D \, ds \leq w(x)+ D.   
\end{align*}   
Rearranging the above  we obtain $\PP_x \{ \tau_k \leq t \} \leq e^{Ct}(w(x)+ D)/w_k$.  Passing $k\to \infty$, we find that $\PP_x \{ \tau_\X \leq t \} =0$ for any given $t\geq 0$ and $x\in \X$ 
and~\eqref{eqn:nonexpl} follows.  
\end{proof}

\subsection{The process $x_t$ stopped on $\partial U$} \label{sec:stopped}
We will often use the process $x_t$ \emph{stopped} on the boundary $\partial U$.  That is, define the stopping time
 \begin{align}\label{eqn:dotz}
 \tau_0= \inf\{ t\geq 0 \,: \, x_t \notin U\}.  
 \end{align} 
Then the \emph{stopped process} $\tilde{x}_t$ is defined by  
 \begin{align}\label{eqn:dfsp}
 \tilde{x}_t:= x_{t\wedge \tau_0}, \,\, t\geq 0.   
 \end{align}
  Because $x_t$ is assumed to be nonexplosive, the stopped process $\tilde{x}_t$ is a well-defined Markov process distributed on $\overline{U}$ for all times $t\geq 0$.  
  We let $\tilde{\mathcal{P}}_t$ and  $\tilde{\mathcal{P}}_t(x, \, \cdot \, )$, $x\in \overline{U}$, respectively denote the Markov semigroup and transitions associated to $\tilde{x}_t$. 

\begin{Remark}
For the process $x_t$ with $x_0 \in \overline{U}$, $\tau_0$ is the first time $x_t$ hits the boundary.  Note that $\tau_0$ is in general different from 
the first positive exit time 
$\tau$ from $U$ when $x_t$ initiates on the boundary $\partial U$, as the process started there may first enter the region $U$ before exiting.    
\end{Remark}

   \subsection{Hypoellipticity}
   \label{sec:hypo}
 Let us first define precisely the term \emph{hypoelliptic}. 
   
   \begin{Definition}
   Let $V\subset \RR^{k}$ be non-empty, open set and $M$ be a differential operator with coefficients belonging to $C^\infty(V)$.  
   We say that $M$ is \emph{hypoelliptic} on $V$ if for any distribution $v$ on $V$ with $Mv \in C^\infty (W)$ for some $W\subset V$ nonempty open, we have $v\in C^\infty(W)$.     
\end{Definition}

  Fundamental to our analysis are the smoothing properties of the differential operator $L$ afforded by hypoellipticity.  
In the classical paper~\cite{Hor_67}, H\"{o}rmander (see Theorem \ref{thm:Hor} below) provides a sufficient condition for hypoellipticity on open $V\subset \RR^k$ for operators $M$ of the form 
\begin{align}
\label{eqn:formM}
M=a+ X_0 + \tfrac{1}{2}\sum_{\ell=1}^j X_\ell^2,
\end{align}
 where $a \in C^\infty(V; \RR)$ and $X_0, X_1, \ldots, X_j$ are $C^\infty$ vector fields on $V$.  
 
 After some algebraic manipulations, note that 
\begin{align*}
 L, L^*, \partial_t \pm L, \partial_t \pm L^*, 
 \end{align*}
 where $L$ is as in~\eqref{eqn:gen} and $L^*$ denotes the formal $L^2(dx)$-adjoint of $L$, can all be written in the form~\eqref{eqn:formM} on the respective open sets $\X, \X, (0, \infty) \times \X, (0, \infty) \times \X$.  For example, observe that if we let $X_i= \sum_{j=1}^\mathfrak{m}\sigma_{ji}(x)\partial_{x_j}$, $i=1,\ldots, r$, and $Y_0= \sum_{j=1}^\mathfrak{m} b_j(x) \partial_{x_j}$, then 
 \begin{align}
 L &= Y_0- \sum_{\ell=1}^m\bigg[\sum_{i=1}^r \sum_{j=1}^m \sigma_{ji}(x) \partial_{x_j}(\sigma_{\ell i}(x)) \bigg]\frac{\partial}{\partial x_\ell} + \tfrac{1}{2}\sum_{j=1}^r X_j^2  \\
 \label{eqn:vfL}&=: X_0 + \tfrac{1}{2}\sum_{j=1}^r X_j^2.  \end{align}

 To introduce H\"{o}rmander's condition which implies hypoellipticity, we first define 
the \emph{Lie bracket} $[X,Y]$ of differentiable vector fields  $X=\sum_i X^i \partial_{x_i}$ and $Y= \sum_j Y^j \partial_{x_j}$ on an open set in $\RR^{k}$  by 
\begin{align*}
[X,Y]= \sum_{j=1}^k \sum_{i=1}^k (X^i \partial_{x_i}(Y^j) - Y^i \partial_{x_i}(X^j)) \partial_{x_j}.   
\end{align*}

\begin{Definition}
Suppose $V\subset \RR^{k}$ is an open set and $M$ is an operator of the form~\eqref{eqn:formM}, where $X_0, X_1, \ldots, X_m$ are 
$C^\infty$ vector fields on $V$ and $a\in C^\infty(V)$.  Define the following  $C^\infty$ vector fields on $V$:
\begin{equation}
\label{eqn:Horlist}
\begin{aligned}
&X_j,  &&j\in \{0,1, \ldots, m\}\\
 &[X_i, X_j],  &&i,j \in \{ 0,1, \ldots, m \} \\
&[[X_i, X_j], X_k], && i,j,k \in \{0,1,2, \ldots, m \}\\
 & \vdots && \vdots
\end{aligned}
\end{equation}  
If the vector fields in~\eqref{eqn:Horlist} span the tangent space at all points in $V$, we say $M$ satisfies the \emph{H\"{o}rmander condition} on $V$.  
\end{Definition}

\begin{Remark}
If $M$ of the form~\eqref{eqn:formM} is elliptic, then $X_1, X_2, \ldots, X_m$ span the tangent space at all points in $V$.  Thus the brackets in~\eqref{eqn:Horlist} can
be viewed as a generalization 
of ellipticity.  Relating this back to $L$ in~\eqref{eqn:gen} and the equation~\eqref{eqn:sde}, one can interpret Lie brackets  in~\eqref{eqn:vfL} as a propagation of the randomness implicitly through the equation.  
\end{Remark}

H\"{o}rmander's theorem provides even more refined estimates on the smoothing effect along every bracket in the list~\eqref{eqn:Horlist}.  Essentially, the more brackets one takes to reach a certain direction, 
the smaller the smoothing effect occurs along that direction.  While we do not state this general result rigorously, we need below the following simpler version to establish a generalization of Bony's Harnack inequality in \cref{sec:green}. 

\begin{Theorem}[H\"{o}rmander 1967 \cite{Hor_67}]
\label{thm:Hor}
If $M$ defined in~\eqref{eqn:formM} satisfies the H\"{o}rmander condition on a nonempty open set $V\subset \RR^{k}$,  then $M$ is hypoelliptic on $V$.  More specifically, there exists $\delta >0$ such that for any $\psi_1, \psi_2 \in C^\infty_0(V)$ with $\psi_2=1$ on an open neighborhood of $\text{\emph{supp}}(\psi_1)$ we have  
\begin{align}
\label{eqn:Horest}
\| \psi_1 u \|_{H^{s+\delta}(\RR^k)} \leq C_s( \| \psi_2 M u \|_{H^s(\RR^k)} + \| \psi_2 u \|_{H^s(\RR^k)})
\end{align}  
for any $s\in \RR$ and any distribution $u$ on $V$ with $\psi_2 u, \psi_2 Mu \in H^{s}(\RR^{k})$.  In \eqref{eqn:Horest}, $C_s$ is a constant depending only on $s$, $\psi_1$, $\psi_2$, 
and the domain $V$ and all functions are assumed to be zero outside of their compact supports.       
\end{Theorem}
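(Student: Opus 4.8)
\emph{Proof proposal (sketch of H\"{o}rmander's 1967 argument).}
This is H\"{o}rmander's theorem \cite{Hor_67}, so I only indicate the structure of a proof; a complete treatment can be found in \cite{Hor_67} and in standard texts on pseudodifferential operators. The plan is to isolate a single \emph{a priori subelliptic estimate} and then extract from it both hypoellipticity and the quantitative bound~\eqref{eqn:Horest} by a routine regularization argument. First one proves: there is $\delta>0$ so that for every pair $\psi_1,\psi_2\in C_0^\infty(V)$ with $\psi_2\equiv 1$ on a neighborhood of $\mathrm{supp}\,\psi_1$ there is $C>0$ with
\begin{align}
\label{eqn:HorAP}
\| \psi_1 v\|_{H^\delta(\RR^k)}^2 \leq C\big( |\langle \psi_2^2 M v, v\rangle| + \| \psi_2 v\|_{L^2(\RR^k)}^2\big) \qquad \text{for all } v\in C_0^\infty(V),
\end{align}
together with its versions shifted along the Sobolev scale. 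Cauchy--Schwarz applied to the first right-hand term turns~\eqref{eqn:HorAP} into the $s=0$ case of~\eqref{eqn:Horest}, and the shifted versions give general $s$. The heart of the matter is~\eqref{eqn:HorAP}, which I would carry out in two parts.

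\emph{Base estimate.} Writing $M=a+X_0+\tfrac12\sum_{\ell=1}^m X_\ell^2$ as in~\eqref{eqn:formM} and integrating by parts in $\langle \psi_2^2 M v,v\rangle$ --- each $X_\ell$ being skew-adjoint modulo a zeroth-order term, and $X_0$ contributing only a zeroth-order term to the real part --- yields control of the ``good'' directions, $\sum_{\ell=1}^m\|X_\ell v\|_{L^2}^2 \leq C(|\langle \psi_2^2 M v,v\rangle|+\|\psi_2 v\|_{L^2}^2)$, the errors from the cutoffs $[\psi_i,X_\ell]$ being of order zero and hence harmless. The drift direction $X_0$ is then recovered from the identity $X_0 v = M v - a v - \tfrac12\sum X_\ell^2 v$, but only at a loss; equivalently $X_0$ is assigned weight two, matching the parabolic scaling of $M$.

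\emph{Commutator iteration.} This I expect to be the main obstacle. Using a Friedrichs-type pseudodifferential regularization of suitable order together with the sharp G\aa rding inequality, one establishes a commutation lemma: if first-order operators $Z,Z'$ drawn from the list~\eqref{eqn:Horlist} (or from brackets already controlled) satisfy direction-wise regularity bounds measured against the right-hand side of~\eqref{eqn:HorAP}, then their bracket $[Z,Z']$ satisfies an analogous bound in which the available regularity is (roughly) halved --- a loss of half a derivative per bracket, and more for brackets involving $X_0$, consistently with its weight two. Starting from the base estimate (gain $1$ along $X_1,\dots,X_m$) and iterating over the finitely many brackets in~\eqref{eqn:Horlist} needed to span the tangent space at every point of the compact set $\mathrm{supp}\,\psi_2$ --- finiteness of the required bracket length $N$ there being exactly the H\"{o}rmander condition together with compactness --- one reaches $L^2$-control of $\delta$ derivatives of $\psi_1 v$ with $\delta$ comparable to $2^{-N}$, which is~\eqref{eqn:HorAP}. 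The delicate point is the bookkeeping: each commutation produces lower-order error terms and requires enlarging the cutoffs slightly, and one must verify that every such error is absorbable into the right-hand side of~\eqref{eqn:HorAP}; this is where the pseudodifferential calculus does the real work and where the various streamlinings of H\"{o}rmander's original argument are concentrated.

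Finally, passing from~\eqref{eqn:HorAP} to the theorem is standard. Given a distribution $u$ on $V$ with $\psi_2 u,\psi_2 M u\in H^s$, apply the $s$-shifted form of~\eqref{eqn:HorAP} to the mollifications $u_\epsilon = J_\epsilon u$; Friedrichs' commutator lemma makes $[M,J_\epsilon]$ uniformly bounded on the relevant scale, so $\psi_1 u_\epsilon$ is bounded in $H^{s+\delta}$ uniformly in $\epsilon$, and letting $\epsilon\to 0$ gives $\psi_1 u\in H^{s+\delta}$ and~\eqref{eqn:Horest}. Iterating in $s$ and using Sobolev embedding shows $u\in C^\infty$ on any open subset of $V$ where $M u$ is smooth, i.e. hypoellipticity.
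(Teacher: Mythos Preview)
The paper does not prove this theorem; it is stated as H\"{o}rmander's classical result and referred to \cite{Hor_67}, with only the remark that hypoellipticity follows from~\eqref{eqn:Horest} by bootstrapping. Your sketch is therefore not competing with any argument in the paper --- you are supplying what the paper deliberately omits.

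As a sketch of H\"{o}rmander's original approach, your outline is broadly accurate: the $L^2$ base estimate for the noise directions $X_1,\dots,X_m$ via integration by parts, the weight-two treatment of $X_0$, the commutator iteration with a loss at each bracket, and the Friedrichs mollification to pass from the a priori estimate to distributions are all the right ingredients. The honest caveats you flag (bookkeeping of errors, cutoff enlargement, the pseudodifferential machinery behind the commutation lemma) are exactly where the substance lies, and you correctly do not pretend to have carried them out. One small point: the gain $\delta$ in H\"{o}rmander's original argument is of order $1/N$ rather than $2^{-N}$ (the halving-per-bracket heuristic is closer to Kohn's later proof); this does not affect the statement here, which only asserts existence of some $\delta>0$, but it is worth being precise about which version you are sketching.
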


\begin{Remark}
Note that hypoellipticity is a consequence of the inequality~\eqref{eqn:Horest} by a bootstrapping argument.  
As remarked above, although we mostly use hypoellipticity somewhat independently of H\"{o}rmander's result, we need the precise estimate~\eqref{eqn:Horest} for the proof of a generalization of Bony's Harnack inequality~\cite{Bony_69}.    
\end{Remark}

Often, instead of listing operators that are hypoelliptic, we make a simper hypothesis.  See the  \emph{parabolic H\"{o}rmander condition} below, which ensures that an entire list of operators is hypoelliptic.

\begin{Definition}
\label{def:parahor}
Suppose that $V\subset \RR^k$ is open and $M$ is an operator of the form~\eqref{eqn:formM} where the $X_0, X_1, \ldots, X_m$ are $C^\infty$ vector fields on 
$V$ and $a \in C^\infty(V)$.  Let $\mathcal{V}_0= \{ X_1, X_2, \ldots, X_m\}$ and for $k\geq 1$ inductively define
\begin{align*}
\mathcal{V}_k = \{ [X_i, X]\, : \, X\in \mathcal{V}_{k-1}, i=0,1,2, \ldots, m \} \qquad \textrm{and} \quad \mathcal{V}= \bigcup_{k=0}^\infty \mathcal{V}_k \,.
\end{align*}
 If $\mathcal{V}$ spans $\RR^{\mathfrak{m}}$ at all points in $V$, then we say that $M$ satisfies the 
\emph{parabolic H\"{o}rmander condition} on $V$.  
\end{Definition}

\begin{Remark}
Observe that for $M$ to satisfy the parabolic H\"{o}rmander condition, one cannot include $X_0$ in the list of a spanning set.  Rather, $X_0$ must be first commuted with another vector field, for example $[X_0, X_1]$. 
\end{Remark}

\begin{Example}
Let
\begin{align*}
M= \partial_{x_2}^2 + \partial_{x_1} =: X_1^2+ X_0. 
\end{align*} 
Then, $M$ satisfies the H\"{o}rmander condition on $\RR^2$ but $M$ does not satisfy the parabolic H\"{o}rmander condition on any open set in $\RR^2$.  
\end{Example}

Because the parabolic H\"{o}rmander condition limits the fields that can be taken in a spanning set, a routine calculation shows that if $V\subset \X$ is non-empty, open and $L$ as in~\eqref{eqn:gen} is written in the form~\eqref{eqn:vfL} and satisfies the parabolic H\"{o}rmander condition on $V$, then all operators 
\begin{align}\label{parhyp:list}
L, \, L^*, \, L +\beta, \, L^*+ \beta,\, \partial_t \pm L,\,  \partial_t \pm L^*
\end{align}
where $\beta \in \RR$ 
are hypoelliptic on the respective sets
\begin{align}
V, \,V, \,V,\, V,\, (0, \infty) \times V,\, (0, \infty) \times V.  
\end{align}
Below, we will see that 
 hypoellipticity of the operators in \eqref{parhyp:list} decides the existence and regularity of densities related to the law of $x_t$.

\subsection{Assumption list}  Here we provide an almost complete list of assumptions used in the paper.  It is meant as a reference except for \cref{rem:assump}.  Thus, aside from \cref{rem:assump},  
the reader should skip the rest of this section and come back to consult particular assumptions used later in the paper. 

\begin{Remark}
\label{rem:assump}
Throughout, we assume that $b\in C^\infty(\X; \RR^\mathfrak{m})$ and $\sigma \in C^\infty(\X; M_{\mathfrak{m} \times r})$ without explicitly mentioning it.     Also, if $U$ is bounded, 
one does \emph{not} need to explicitly assume that $x_t$ is nonexplosive using the extension argument from \cref{rem:nonexpl}.  
  \end{Remark}
Depending on the context, we apply the following assumptions as needed.

\vspace{0.1in}

\begin{raggedright}

\hypertarget{(U00)}{\bf(U00)} $U \subset \RR^\mathfrak{m}$ is nonempty, open set and $V:=U^c$ is nonempty, closed set with $\X \supset \overline{U}$ open.    

\hypertarget{(U0)}{\bf(U0)} $U \subset \RR^\mathfrak{m}$ is nonempty, open set and $\X\supset \overline{U}$ is an open set.

\hypertarget{(NE)}{\bf(NE)}  $x_t$ is nonexplosive as in \cref{def:nonexpl}.

\hypertarget{(L1)}{\bf(L1)} $L$ is hypoelliptic on $U$.  

\hypertarget{(L2)}{\bf(L2)}  $\partial_t -L^*$ is hypoelliptic on $(0, \infty) \times U$.

\hypertarget{(L3)}{\bf(L3)}  $\partial_t-L$ is hypoelliptic on $(0, \infty) \times U$.

\hypertarget{(L4)}{\bf(L4)}  $L^*$ is hypoelliptic on $U$.  

\hypertarget{(PH)}{\bf(PH)}  $L$ satisfies the parabolic H\"{o}rmander condition on $U$ as in~\cref{def:parahor}.

\hypertarget{(UIDgx)}{\bf (UID($g, x_*$))} Fix $g:\overline{U}\to \RR$ measurable and $x_* \in \partial U$.  Then for some $\delta >0$ the family 
\begin{align}
\label{eqn:defGg}
\mathcal{G}_{g, \delta}(x_*) := \{ g(x_\tau(x)) \, : \, |x-x_* |< \delta, \, x\in U\}
\end{align}
 is uniformly integrable.  Here, $\tau$ is as in \eqref{eqn:exitpos} and $x$ in $x_\tau(x)$ means $x_0 = x$.

\hypertarget{(UIDg)}{\bf (UID($g$))} Fix $g:\overline{U}\rightarrow \RR$ measurable.  Then,  for every $x_* \in \partial U$, condition \hyperlink{(UIDgx)}{{\bf (UID($g,x_*$))}}
 is satisfied.

\hypertarget{(UIPfx)}{\bf (UIP($f, x_*$))} 
 Fix $x_* \in \partial U$ and $f:\overline{U}\to \RR$ measurable.  Then for some $\delta >0$, 
\begin{align}
\label{eqn:defGf}
\mathcal{G}_\delta^f(x_*) := \{ \textstyle{\int_0^\tau f(x_s (x)) \, ds}\, : \, |x-x_*| < \delta, \, x\in U \}
\end{align}
 is uniformly integrable.      

\hypertarget{(UIPf)}{\bf (UIP($f$))}  Fix $f: \overline{U}\rightarrow \RR$ measurable.  Then for every $x_* \in \partial U$, condition \hyperlink{(UIPfx)}{{\bf (UIP($f, x_*$))}} is satisfied.

\hypertarget{(CEx)}{{\bf (CE($x_*$))}}
Fix $x_* \in \partial U$ and recall $\X_n$ introduced in~\eqref{eqn:tauX}. 
 For every $\delta_1 >0$ there exists $n\in \N$ and  $\delta_2 >0$ such that $|x-x_*| < \delta_2$, $x\in U$, implies
\begin{align}
\label{eqn:condexit}
\PP_x\{ \tau_{\X_n}< \tau \} < \delta_1.  
\end{align}

\hypertarget{(CE)}{{\bf (CE)}}  For every $x_* \in \partial U$, condition \hyperlink{(CEx)}{{\bf (CE($x_*$))}} is satisfied.

\end{raggedright}

\vspace{0.1in}

 In \cref{rmk:cfcx}, we provide sufficient conditions for 
\hyperlink{(CEx)}{{\bf (CE($x_*$))}} to hold.

\section{Remarks on Boundary Behavior}
\label{sec:boundary}

In order to solve the equation~\eqref{eqn:pp} in the classical sense, understanding the behavior of the process $x_t$ satisfying the equation~\eqref{eqn:sde} near the boundary $\partial U$ is critical.  
In this section, we  explore various conditions related to boundary behavior  used in the literature to ensure well-posedness of equation~\eqref{eqn:pp} (in the classical sense) when $L$ fails to be uniformly elliptic.         

\subsection{Nice points and regular points}
\begin{Definition}
\label{def:nice}
We call $x_* \in \partial U$ \emph{nice} if there exists an open neighborhood $U_{x_*}\subset \X$ of $x_*$ and a function $w\in C^2(U_{x_*})$ satisfying the following conditions:\begin{itemize}
\item[(i)]  $w>0$ on $U_{x_*} \setminus \{ x_* \}$ and $w(x_*)=0$;
\item[(ii)]  $Lw < 0$ on $U_{x_*}$.  
\end{itemize}   
\end{Definition}

\begin{Remark}
We show in \cref{prop:niceimpreg} below that if $x_* \in \partial U$ is nice and the relevant hypoellipticity is satisfied, then the process $x_t$ exits $U$ instantaneously when started from $x_*$. 
However, proving that $x_*$ is nice; that is, finding a Lyapunov function $w$ in  \cref{def:nice}, can be highly nontrivial or even impossible. One can ensure $x_* \in \partial U$ is nice provided $\partial U$ has an exterior normal vector to $U$ at $x_*$ and randomness pointing in the direction of the normal vector (see \cref{rem:noisepoint} below). 
Intuitively,  the process $x_t$ projected onto this normal direction for small times behaves like a scaled one-dimensional Brownian motion.  Then, the process must exit the domain instantaneously as the one-dimensional Brownian motion has no preferred direction and dominates the motion in small times.  See~\cite[Section 7]{CFH_21} for further details.
\end{Remark}

\begin{Remark}
If $U$ is furthermore assumed to be a bounded and $L$ is assumed to be hypoelliptic, then one can show that the Dirichlet problem~\eqref{eqn:pp} with $f\equiv 0$, has a unique classical solution if all points on the boundary are nice.  Although the main result in~\cite{Ram_97} establishes this fact, it is not exactly stated in this way.          
\end{Remark}

In this paper, we find it more convenient to phrase our hypotheses in terms of stopping times for the process $x_t$ solving~\eqref{eqn:sde}.  Hence, we define 
\begin{align}\label{dftb}
\overline{\tau}= \inf\{ t\geq  0 \, : \, x_t \notin \overline{U}\} \,,
\end{align}
and recall the first positive exit time $\tau $ from $U$ defined in equation~\eqref{eqn:exitpos}.

\begin{Definition}
\label{def:regular} 
We call an interior point $x\in U$ \emph{regular} if $\PP_{x}\{ \tau=\overline{\tau}  \}=1$.  A boundary point $x_* \in \partial U$ is called \emph{regular} if $\PP_{x_*}\{ \overline{\tau}=0 \}=1$.  Points on $\overline{U}$ are called \emph{irregular} otherwise.  We call the set $U$ \emph{interior regular} if all $x\in U$ are regular.  We call $ U$ \emph{boundary regular} if all $x_* \in \partial U$ are regular.         
\end{Definition}

\begin{Remark}
Observe that if $x\in U$ is regular, then the process $x_t$ started from $x$ exits $U$ and $\overline{U}$ at the same time.  In particular, $x_t$ cannot reach $\partial U$ and return to the interior
of $U$ with positive probability. 
 On the other hand, $x_* \in \partial U$ being regular means $x_t$ initiated at $x_*$ must exit $\overline{U}$ instantaneously.  
\end{Remark}

\begin{Remark}
Note that the event $\{ \overline{\tau}=0 \}$ belongs to the \emph{germ sigma field} $\bigcap_{t>0} \mathcal{F}_t$, hence has probability $0$ or $1$ by Blumenthal's $0$-$1$ law.  Thus,
 $x_* \in \partial U$ being regular is equivalent to $\PP_{x_*} \{ \overline{\tau} =0\}>0$.           
\end{Remark}

Using the strong Markov property for $x_t$, the next result states that boundary regular implies interior regular. 
\begin{Proposition}
\label{prop:bregimintreg}
If $U$ is boundary regular, then $U$ is interior regular.  
\end{Proposition}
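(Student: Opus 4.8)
The plan is to show that for a fixed interior point $x \in U$, the process started at $x$ cannot hit $\partial U$ and return to $U$, using the strong Markov property together with the assumed boundary regularity. Concretely, I would argue by contradiction: suppose $\PP_x\{\tau < \overline{\tau}\} > 0$. On the event $\{\tau < \overline{\tau}\}$ we have $x_\tau \in \partial U$ (by continuity of paths and the definition of $\tau$ as the first positive exit time from the open set $U$), and moreover the process does \emph{not} leave $\overline{U}$ at time $\tau$, i.e. it stays in $\overline{U}$ for a nontrivial amount of time afterwards. The idea is that this contradicts regularity of the boundary point $x_\tau$, which forces instantaneous exit from $\overline{U}$.

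The key steps, in order, would be: (1) Write $\{\tau < \overline{\tau}\} = \{\tau < \infty\} \cap \{\overline{\tau} > \tau\}$ and observe that on this event $x_\tau \in \partial U$ almost surely; this uses path continuity and that $U$ is open while $\overline{U}$ is closed, so the first positive exit from $U$ lands on $\partial U$, and $\overline{\tau} > \tau$ means the point has not yet exited $\overline{U}$. (2) Apply the strong Markov property at the stopping time $\tau$: conditionally on $\mathcal{F}_\tau$ and on $\{\tau < \infty\}$, the shifted process $(x_{\tau + s})_{s \geq 0}$ is distributed as the process started from $x_\tau$. Define $\overline{\tau}\circ\theta_\tau = \inf\{s \geq 0 : x_{\tau+s} \notin \overline{U}\}$; then on $\{\tau < \infty\}$ one has the identity $\overline{\tau} = \tau + \overline{\tau}\circ\theta_\tau$ (since the process is in $\overline{U}$ up to time $\tau$, having only just reached $\partial U$). (3) Hence $\PP_x\{\tau < \overline{\tau}\} = \PP_x\big(\{\tau < \infty\} \cap \{\overline{\tau}\circ\theta_\tau > 0\}\big) = \E_x\big[\indFn{\{\tau<\infty\}} \PP_{x_\tau}\{\overline{\tau} > 0\}\big]$. (4) Since $x_\tau \in \partial U$ on this event and $U$ is boundary regular, Definition~\ref{def:regular} gives $\PP_{x_\tau}\{\overline{\tau} = 0\} = 1$, so $\PP_{x_\tau}\{\overline{\tau} > 0\} = 0$ almost surely, whence the expectation vanishes. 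This yields $\PP_x\{\tau < \overline{\tau}\} = 0$, i.e. $\PP_x\{\tau = \overline{\tau}\} = 1$ (noting $\tau \leq \overline{\tau}$ always, since exiting $\overline{U}$ entails exiting $U$), which is exactly regularity of $x$.

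I expect the main obstacle to be the measure-theoretic bookkeeping in step (2)–(3): verifying that $x_\tau \in \partial U$ on $\{\tau < \overline{\tau}\}$ with probability one (handling the event $\{\tau = \infty\}$, where there is nothing to prove, separately), and carefully justifying the additivity identity $\overline{\tau} = \tau + \overline{\tau}\circ\theta_\tau$ on $\{\tau<\infty\}$ together with the measurability required to push the conditional probability $\PP_{x_\tau}\{\overline{\tau}>0\}$ inside the expectation. The application of the strong Markov property is standard given nonexplosivity (so $x_t$ is a well-defined strong Markov process), but one should be mindful that $\overline{\tau}$, $\tau$ are honest stopping times and that $\{\overline{\tau} > 0\}$ is in the germ field so that the $0$--$1$ law remark applies uniformly in the starting point on $\partial U$. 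Once these points are in place, the contradiction argument closes immediately.
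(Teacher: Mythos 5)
Your proof is correct and follows essentially the same route as the paper's: condition on $\mathcal{F}_\tau$ via the strong Markov property, invoke boundary regularity at $x_\tau \in \partial U$ to get $\PP_{x_\tau}\{\overline{\tau}=0\}=1$, and handle the event $\{\tau=\infty\}$ separately (where $\tau=\overline{\tau}$ trivially). The paper phrases this as showing $\PP_x\{\tau=\overline{\tau},\,\tau<\infty\}=\PP_x\{\tau<\infty\}$ rather than $\PP_x\{\tau<\overline{\tau}\}=0$, but these are the same computation; the shift identity $\overline{\tau}=\tau+\overline{\tau}\circ\theta_\tau$ you make explicit is exactly what underlies the paper's one-line conditional-expectation step.
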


\begin{proof}
Let $x\in U$ and suppose $\tau =\infty$.  Then $\overline{\tau}\geq \tau=\infty$, so $\tau= \overline{\tau}$.  If the event $\{\tau< \infty\}$ has positive probability, then the strong Markov property 
and boundary regularity gives
\begin{align*}
\PP_x\{ \tau= \overline{\tau}, \, \tau < \infty \}= \E_x [\E_x \mathbf{1}_{\{ \tau= \overline{\tau}, \, \tau < \infty \}} | \mathcal{F}_\tau]= \E_x \mathbf{1}_{\{ \tau < \infty \}} \PP_{x_\tau} \{ \overline{\tau}=0 \}=\PP_x\{ \tau< \infty\}.\end{align*}
This finishes the proof since 
\begin{align*}
\PP_x\{ \tau= \overline{\tau}\} &= \PP_x\{ \tau= \overline{\tau}, \, \tau =\infty\}+ \PP_x\{ \tau= \overline{\tau}, \, \tau <\infty\}\\
&= \PP_x\{ \tau=\infty \} + \PP_x\{ \tau < \infty\}=1.  
\end{align*}  
\end{proof}
As the next example shows, the converse of \cref{prop:bregimintreg} is false even if the parabolic H\"{o}rmander condition is satisfied.  

\begin{Example}
\label{ex:non}
Let $U\subset \RR^2$ be the interior of the open square with vertices $(-1,1)$, $(1,1)$, $(1,-1)$, $(-1,-1)$, and consider the following SDE on $U$
\begin{align*}
dx^1_t&=-(x_t^2)^2 \, dt \,, \\
dx^2_t&=\sqrt{2} \, dB_t \,,
\end{align*}
where $B_t$ is a standard, one-dimensional Brownian motion.  Let $I$ be the open segment connecting $(1, -1)$ and $(1, 1)$. 
Note that all points on $I \subset \partial U$ are not regular since $x_t^1$ is decreasing for all times.  On the other hand, $x_t=(x_t^1, x_t^2)$ has generator 
\begin{align*}
L= -(x^2)^2 \partial_{x^1} + \partial_{x^2}^2=: X_0 + X_1^2,  
\end{align*}        
which satisfies the parabolic H\"{o}rmander condition on $\RR^2$ since 
$$
X_1=\partial_{x^2}\,\, \text{ and } \,\,  [X_1, [X_1,X_0]]=-2 \partial_{x^1}
$$ 
span the tangent space at all points in $\RR^2$.  Also, observe that the process initiated at $x\in U$ can only exit $U$ on $\partial U \setminus I$.  
Hence, $U$ is interior regular since every point on $\partial U \setminus I$ is regular.         
\end{Example}

Next, we clarify a the relationship between boundary regular and nice.  
\begin{Proposition}
\label{prop:niceimpreg}
Suppose $x_* \in \partial U$ is nice and that $\partial_t \pm L$ and $ \partial_t \pm L^*$ are hypoelliptic on $(0, \infty) \times B_\delta(x_*)$ for some $\delta >0$ such that $B_\delta(x_*) \subset \X$.  
Then, $x_*$ is regular.  
\end{Proposition}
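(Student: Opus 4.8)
The plan is to show that if $x_*$ is nice with associated neighborhood $U_{x_*}$ and Lyapunov function $w$, then the process started at $x_*$ satisfies $\PP_{x_*}\{\overline\tau = 0\} > 0$, which by the Blumenthal $0$--$1$ law remark preceding the proposition forces $\PP_{x_*}\{\overline\tau = 0\} = 1$, i.e. regularity. The natural route is to argue by contradiction: suppose $\PP_{x_*}\{\overline\tau = 0\} = 0$, so that $\PP_{x_*}\{\overline\tau > 0\} = 1$. Pick $\delta$ small enough that $B_\delta(x_*) \subset U_{x_*} \cap \X$ and set $\rho = \overline\tau \wedge \tau_{B_\delta(x_*)}$, a stopping time that is strictly positive almost surely under $\PP_{x_*}$ by our contradiction hypothesis (and nonexplosivity). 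On the (random) time interval $[0,\rho)$ the process stays in $\overline U \cap B_\delta(x_*) \subset U_{x_*}$, so I can apply Dynkin's formula (in the localized form of \cref{rem:dynkin}, taking a further minimum with $\tau_{\X_k}$ and then passing $k \to \infty$ if needed) to $w$ along $x_{t \wedge \rho}$. Since $Lw < 0$ on $U_{x_*}$, the process $t \mapsto w(x_{t\wedge\rho})$ is a nonnegative supermartingale under $\PP_{x_*}$ started from $w(x_*) = 0$; a nonnegative supermartingale started at $0$ is identically $0$, forcing $w(x_{t\wedge\rho}) = 0$ for all $t$, hence $x_{t\wedge\rho} = x_*$ for all $t$ (using that $w > 0$ away from $x_*$ on $U_{x_*}$). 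But then the process is constant and never leaves $U_{x_*}$, contradicting, e.g., the fact that $x_t$ is a genuinely random diffusion near $x_*$ — more carefully, this contradicts the hypoellipticity-driven nondegeneracy of the law of $x_t$.

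The delicate point — and the one I expect to be the main obstacle — is exactly how to extract the final contradiction, i.e. how to rule out the process sitting at $x_*$ forever. A clean way: hypoellipticity of $\partial_t - L$ on $(0,\infty)\times B_\delta(x_*)$ implies that for $t > 0$ the transition kernel $\mathcal P_t(x_*, \cdot)$, restricted appropriately, has a smooth (hence a.e. positive on the reachable set, and in particular not a Dirac mass at $x_*$) density with respect to Lebesgue measure on $B_\delta(x_*)$; so $\PP_{x_*}\{x_t = x_* \text{ for all small } t\} = 0$, directly contradicting $x_{t\wedge\rho} \equiv x_*$ on a positive-probability event. Alternatively, since the supermartingale argument would also give $L w(x_*) = 0$ after differentiating $\E_{x_*} w(x_{t\wedge\rho})$ at $t = 0$, and this contradicts $Lw(x_*) < 0$; I'd use whichever of these is cleaner to make rigorous given the stopping-time bookkeeping, but the density argument via $\partial_t - L$ hypoellipticity is the more robust one and is surely why that hypothesis appears in the statement.

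A subtlety to handle with care is that $\overline\tau$ need not be positive a priori — that is precisely what we are trying to establish — so the contradiction hypothesis $\PP_{x_*}\{\overline\tau > 0\} = 1$ is what makes $\rho > 0$ a.s.\ and lets the supermartingale run on a nontrivial interval; this is the logical crux of the contradiction setup and must be stated explicitly. A second subtlety is that Dynkin's formula in \cref{rem:dynkin} requires the stopping time to be bounded and dominated by some $\tau_{\X_k}$, so I will work with $\rho \wedge t \wedge \tau_{\X_k}$, derive the supermartingale inequality $\E_{x_*} w(x_{\rho \wedge t \wedge \tau_{\X_k}}) \le w(x_*) = 0$ together with nonnegativity of $w$, and then send $k \to \infty$ using nonexplosivity (which holds here since $B_\delta(x_*)$ is bounded, cf.\ \cref{rem:nonexpl}) and monotone/dominated convergence. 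The roles of $\partial_t \pm L^*$ and of $\partial_t - L$ hypoellipticity beyond $\partial_t - L$ itself presumably enter only to guarantee the existence and positivity properties of the density used in the last step, consistent with how densities of $x_t$ are controlled elsewhere in the paper; I would invoke that machinery as a black box at the point of contradiction.
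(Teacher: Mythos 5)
Your proof is correct but inverts the logical order of the paper's argument and so is a genuinely different route. The paper applies hypoellipticity \emph{first}: working with the process stopped on exiting $V = B_\delta(x_*)$, it uses hypoellipticity of $\partial_t\pm L^*$ (the Fokker--Planck density mechanism behind \cref{thm:kol}) to conclude that the law of the stopped process restricted to $V$ has a smooth density, so that if $\PP_{x_*}\{\overline{\tau}>0\}=1$ the process must exit some smaller ball $B_\epsilon(x_*)$ before $\overline{\tau}$ with positive probability (else the law would have an atom at $x_*$); Dynkin's formula, using only $Lw\le 0$ and $w\ge c>0$ on $\partial B_\epsilon(x_*)$, then forces that exit probability to vanish, the contradiction. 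You apply Dynkin \emph{first}: the nonnegative supermartingale $w(x_{t\wedge\rho})$ started at $0$ forces $x_{t\wedge\rho}\equiv x_*$, and hypoellipticity then rules out a Dirac atom in the transition kernel. Both work; the paper's order is slightly tidier in that it never has to conjure and then dismantle the frozen-path scenario. Two small points. First, the density comes from hypoellipticity of $\partial_t\pm L^*$ (condition \hyperlink{(L2)}{{\bf(L2)}} invoked in \cref{thm:kol}), not of $\partial_t-L$ as you write — harmless since both are hypothesized, but the attribution is off. Second, the strict inequality $Lw<0$ from \cref{def:nice}, which neither you nor the paper actually uses inside Dynkin's formula, yields an even shorter proof requiring no density theory at all; the $Lw(x_*)<0$ route you sketched and then set aside becomes rigorous once phrased with a stopping time. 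Namely, under the contradiction hypothesis $\sigma:=t\wedge\overline{\tau}\wedge\tau_{B_\delta(x_*)}>0$ a.s.\ (hence $\E_{x_*}\sigma>0$), and by continuity $Lw\le -c<0$ on the compact set $\overline{U}\cap\overline{B_\delta(x_*)}\subset U_{x_*}$, so Dynkin gives
\begin{align*}
0\le\E_{x_*}w(x_\sigma)=\E_{x_*}\int_0^\sigma Lw(x_s)\,ds\le -c\,\E_{x_*}\sigma<0\,,
\end{align*}
a contradiction in one step.
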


\begin{proof}
Let $x_*\in \partial U$ be nice and fix $\delta>0$ such that $\partial_t \pm L^*$, $\partial_t \pm L^*$ are hypoelliptic on $(0, \infty)\times B_\delta(x_*)$. Choose an open neighborhood  $U_{x_*}$ of $x_*$ such that there exists a function $w\in C^2(U_{x_*})$ satisfying properties (i) and (ii) in \cref{def:nice}.  Without loss of generality, we may assume that $V:=U_{x_*}=B_\delta(x_*)$.    
It follows that the distribution $\mu_t$  of the stopped process 
$x_t^{V}:=x_{t\wedge \tau_{V}}$ (see \cref{thm:kol}) satisfies Fokker-Planck equation 
$(\partial_t + L^*)(\mu_t) = 0$ in the sense of distributions on $(0, \infty)\times V$.  Thus, when restricted to subsets of $(0, \infty) \times V $, $\mu_t$ is absolutely continuous with respect to Lebesgue measure with density $p_t$ which is smooth on $ (0, \infty)\times V$.  In other words, the law of $x_t^V$ restricted to subsets of $V$, for fixed $t$, is absolutely continuous with respect to Lebesgue measure on $V$.

If $x_*$ is not regular, then $\PP_{x_*}\{\overline{\tau} > 0\} = 1$.  Thus, there exists $\epsilon >0$ such that $B_{\epsilon}(x_*) \subset V$ and  $\PP_{x_*}\{ \tau_{B_{\epsilon}(x_*)}< \overline{\tau} \}>0$, for otherwise 
the distribution of $x_t^V$ started at $x_*$ would have non-zero mass concentrated at $x_*$, violating the absolute continuity of $\mu_t$ above.   
For $\sigma(t):= t\wedge \tau_{B_{\epsilon}(x_*)} \wedge \overline{\tau}$,  Dynkin's formula (see \eqref{eqn:Dynkin}) and positivity of $w$ on $\partial B_{\epsilon}(x_*)$ gives
\begin{align*}
c \PP_{x_*} \{ \tau_{B_\epsilon(x_*)} < \overline{\tau} \wedge t \} \leq \E_{x_*} w(x_{\sigma(t)}) \leq w(x_*) =0  
\end{align*}    
for some constant $c>0$ independent of $t$.  Passing $t\rightarrow \infty$ we obtain $\PP_{x_*} \{ \tau_{B_\epsilon(x_*)} < \overline{\tau} \} =0$, a contradiction.  
\end{proof}

\begin{Remark}
\label{rem:noisepoint}
Fix $x_* \in \partial U$ and  as in \cref{prop:niceimpreg} suppose that $\partial_t \pm L$ and $ \partial_t \pm L^*$ are hypoelliptic on $(0, \infty) \times B_\delta(x_*)$ for some 
$\delta >0$ such that $B_\delta(x_*) \subset \X$.  
 Moreover, suppose that $\partial U$ satisfies the \emph{exterior sphere condition} at $x_*$; that is, there exists $\lambda >0$ such that 
 $x_*\in \overline{B_\lambda(x')}$  and $B_\lambda(x') \subseteq \overline{U}^c$, where $B_\lambda(x')$ is
the open ball centered at $x' = x_* + \lambda \nu(x_*)$ with $\nu(x_*)$ being a unit exterior normal vector to $\partial U$ at $x_*$. 
It is standard to see that we can decrease $\lambda$ in the definition of exterior sphere condition if necessary. 
 
Then, if the noise has a non-zero component in  $\nu(x_*)$ at $x_*$, then $x_*$ is regular, see for example~\cite{Bony_69}.  
Such an assumption in fact implies that $x_*$ is nice.  
Indeed, suppose 
\begin{align}
\label{eqn:normal}
\sum_{i,j=1}^\mathfrak{m} (\sigma(x_*) \sigma^T(x_*))_{ij} v_i(x_*) v_j(x_*) >0 .  
\end{align}      
Then, by choosing $\beta >0$ large enough, the function 
\begin{align}
w(x)= e^{-\beta|x'-x_*|^2} - e^{-\beta |x'-x|^2}
\end{align}
satisfies \cref{def:nice} at $x_*$ on $U_{x_*} = B(x_*, |x' - x_*|/2) \cap U$ if $x' = x_* + \lambda \nu(x_*)$ and $\lambda > 0$ is sufficiently small.  
Note that \eqref{eqn:normal} and  large $\beta >0$ allows one to to disregard  
terms in $Lw$ of order $\beta$. 
\end{Remark}

\subsection{Modification of the domain}
We conclude this section by restating~\cite[Corollary 7.10]{CFH_21} in our context.  Intuitively, it asserts that starting from a \emph{reasonable} domain $U$, provided there is noise in a fixed direction 
for all points on the boundary $\partial U$ (see \cref{rem:noisepoint}), one can slightly  modify $U$ to produce an approximate  domain $V$ which is boundary regular.  
Thus, by \cref{prop:bregimintreg}, $V$ is both interior regular and boundary regular.

In order to state the result, we say that $U$ has \emph{non-flat} boundary $\partial U$ if for each $x_* \in \partial U$ and every $r>0$, the set $\partial U \cap B_r(x_*)$ is not a subset of a hyperplane.  Then the following result was proved in~\cite[Corollary 7.10]{CFH_21}.  

\begin{Theorem}
\label{thm:modification}
Suppose that $U\subset \RR^\mathfrak{m}$, $\mathfrak{m} \geq 2$, is a bounded, convex, non-flat domain with $C^1$ boundary $\partial U$.  Suppose, furthermore, that there exists a fixed unit vector $v\in \RR^\mathfrak{m}$ such that $v$ belongs to the column space of $\sigma(x)$ for all $x\in \partial U$.  Then, for every $\epsilon >0$, there exists a non-empty, open convex domain $V_\epsilon \subset U$ with piecewise linear boundary $\partial V_\epsilon$ such that $V_\epsilon$ and $V_\epsilon^c$ are boundary regular for $x_t$ and $|U|-|V_\epsilon| < \epsilon$.  Here, $| \cdot |$ denotes Lebesgue measure on $\RR^\mathfrak{m}$.     
\end{Theorem}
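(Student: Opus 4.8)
The plan is to build $V_\epsilon$ as a convex polytope inscribed in $U$ whose bounding hyperplanes are all slightly tilted so as to be \emph{transversal to the noise direction} $v$, and then to certify regularity one face at a time. Fix $x_* \in \partial U$ with unit outer normal $\nu(x_*)$ (available since $\partial U$ is $C^1$). By convexity the supporting hyperplane at $x_*$ has normal $\nu(x_*)$; tilting it by a small angle produces a closed half-space $H_{x_*}=\{x:\langle x,\mu_{x_*}\rangle\le c_{x_*}\}$ with $x_*\notin H_{x_*}$, with $\langle\mu_{x_*},v\rangle\neq 0$, and such that the ``cap'' $U\setminus H_{x_*}$ lies in a thin slab along the supporting hyperplane and hence has volume as small as we wish (controlled by the tilt angle and $\mathrm{diam}(U)$). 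Since $v\in\mathrm{col}(\sigma(x_*))$, writing $v=\sigma(x_*)w_*$ gives $\langle\sigma(x_*)^{T}\mu_{x_*},w_*\rangle=\langle v,\mu_{x_*}\rangle\neq 0$, so $\sigma(x_*)^{T}\mu_{x_*}\neq 0$; by continuity of $\sigma$ this persists on a neighborhood $N_{x_*}$ of $x_*$, and after shrinking the cap we may assume $\overline{U}\cap\partial H_{x_*}\subset N_{x_*}$. Covering the compact set $\partial U$ by finitely many of the resulting open caps, so that $\partial U\subset\bigcup_{k=1}^{N}H_{x_k}^{c}$ and $\sum_k|U\setminus H_{x_k}|<\epsilon$, and using boundedness and convexity of $U$, the intersection $V_\epsilon:=\bigcap_{k=1}^{N}H_{x_k}$ is a bounded, nonempty, open convex polytope with $V_\epsilon\subset U$ and $|U|-|V_\epsilon|<\epsilon$.

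It remains to show that every point of $\partial V_\epsilon$ is regular for both $V_\epsilon$ and $V_\epsilon^{c}$; by \cref{prop:bregimintreg} this automatically makes both domains interior regular as well. Suppose first $x_*$ lies in the relative interior of a facet contained in $\partial H_{x_k}$. Near $x_*$, $\overline{V_\epsilon}$ is the half-space $\{\langle\cdot-x_*,\mu_{x_k}\rangle\le 0\}$ and $\overline{V_\epsilon^{c}}$ its closed complement; each of these domains satisfies the exterior sphere condition at $x_*$ with outer unit normal $\pm\mu_{x_k}$, and since the facet lies in $N_{x_k}$ we have $\langle\mu_{x_k},\sigma(x_*)\sigma(x_*)^{T}\mu_{x_k}\rangle=|\sigma(x_*)^{T}\mu_{x_k}|^{2}>0$, i.e.\ condition \eqref{eqn:normal} holds. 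Hence the barrier construction of \cref{rem:noisepoint} shows $x_*$ is \emph{nice} for each domain, and \cref{prop:niceimpreg} (using the relevant hypoellipticity of $\partial_t\pm L$ and $\partial_t\pm L^{*}$ near $x_*$) gives that $x_*$ is regular for each. If instead $x_*$ lies on a lower-dimensional face, regularity \emph{for} $V_\epsilon$ is still comparatively soft: near $x_*$, $\overline{V_\epsilon}$ is a pointed cone $\bigcap_j\{\langle\cdot-x_*,\mu_{x_j}\rangle\le 0\}$, whose complement contains the half-space $\{\langle\cdot-x_*,\mu_{x_j}\rangle>0\}$ for any incident facet $j$; since $\sigma(x_*)^{T}\mu_{x_j}\neq 0$, the real-valued process $\langle x_t-x_*,\mu_{x_j}\rangle$ is a time-changed Brownian motion up to a lower-order drift and hence, by the law of the iterated logarithm, becomes positive along a sequence of times $t\downarrow 0$; so $x_t$ leaves $\overline{V_\epsilon}$ instantaneously and, by Blumenthal's $0$--$1$ law, $x_*$ is regular for $V_\epsilon$.

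The genuinely delicate point — and the main obstacle — is regularity \emph{for} $V_\epsilon^{c}$ at these lower-dimensional faces: one must show the process started at such an $x_*$ \emph{enters} the small pointed cone $V_\epsilon$ immediately, almost surely, rather than merely exiting a small cone. The half-space barrier of \cref{rem:noisepoint} is no longer nonnegative throughout $V_\epsilon^{c}$ near $x_*$, so a soft argument fails, and when two facet normals incident to a vertex are ``anti-aligned'' after applying $\sigma(x_*)^{T}$ one cannot simultaneously satisfy the defining inequalities of the cone using only the available noise. Overcoming this requires the fine excursion estimates for the stopped process developed in \cite{CFH_21} (a large-deviation estimate together with a functional law of the iterated logarithm in the direction $v$), together with a sufficiently careful choice of the tilting directions $\mu_{x_k}$ that exploits convexity, the $C^{1}$ regularity and, crucially, the \emph{non-flatness} of $\partial U$, so that near every edge and vertex of $V_\epsilon$ the cone $V_\epsilon$ is accessible along the admissible noise directions. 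This last step is exactly the content of \cite[Corollary~7.10]{CFH_21}, from which the statement follows, so we content ourselves with quoting it.
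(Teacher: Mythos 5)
Your construction is genuinely different from the paper's. The paper does not build $V_\epsilon$ by tilting supporting half-spaces; instead it follows \cite[Corollary~7.10]{CFH_21} and takes $V_\epsilon$ to be the interior of the convex hull of points $x_1,\dots,x_{n(\epsilon)}\in\partial U$ chosen independently at random according to the Hausdorff measure on $\partial U$. Non-flatness of $\partial U$ then ensures that with probability one the resulting facets (and, more subtly, the lower-dimensional faces) are in ``general position'' relative to the fixed direction $v$, and the volume estimate $|U|-|V_\epsilon|<\epsilon$ follows from the quantitative random-polytope results of \cite{SW_03}. The randomness is not cosmetic: it is precisely what rules out the pathological vertex/edge configurations that you yourself identify as the obstruction.

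That is also where your argument has a real gap. You correctly isolate the hard case — showing the process started at an edge or vertex of $V_\epsilon$ enters the pointed cone $V_\epsilon$ instantaneously, i.e.\ regularity for $V_\epsilon^c$ — and you correctly observe that a half-space barrier in the spirit of \cref{rem:noisepoint} no longer works there, and that if the facet normals incident to a vertex become ``anti-aligned'' after applying $\sigma(x_*)^T$ the available noise cannot drive the process into the cone. But your deterministic construction provides no mechanism to exclude exactly this configuration: tilting each supporting hyperplane to achieve $\langle\mu_{x_k},v\rangle\neq 0$ controls one inner product per facet but says nothing about how several $\sigma(x_*)^T\mu_{x_j}$ sit relative to one another at a common vertex, and with finitely many deterministically chosen half-spaces these can be arranged adversarially. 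Invoking \cite[Corollary~7.10]{CFH_21} at the end does not close this gap, because that corollary asserts boundary regularity for the random convex-hull $V_\epsilon$ constructed there, not for an arbitrary nearby deterministic polytope; you would need to re-prove it for your construction, which is exactly the part you have skipped. If you are content to quote \cite[Corollary~7.10]{CFH_21} outright, then the theorem is immediate (it \emph{is} that corollary, restated) and the entire tilted-half-space construction is superfluous; if you want your own construction to work, you must first show — with probability one or by a more careful deterministic choice — that no vertex of $V_\epsilon$ exhibits the anti-alignment degeneracy, and then supply the excursion estimates for the cone. Finally, a smaller point: your facet-interior argument via \cref{rem:noisepoint} and \cref{prop:niceimpreg} silently requires hypoellipticity of $\partial_t\pm L$ and $\partial_t\pm L^*$ near $x_*$, an assumption the theorem as stated does not impose; this also does not appear in the paper's route, and you should either add it as a hypothesis or note that the argument in \cite{CFH_21} avoids it.
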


To prove \cref{thm:modification} one defines $V_\epsilon$ to be the interior of the convex hull of a sufficient number of points $x_1, x_2, \ldots, x_{n(\epsilon)} \in \partial U$ selected independently and randomly according to Hausdorff measure on $\partial U$.  One can then show that with probability one, the faces generated by these points on $\partial V_\epsilon$ are not parallel to the fixed vector $v$ in \cref{thm:modification}, so that the process exits $\overline{V_\epsilon}$ instantaneously when started there.  The fact that the volumes of $V_\epsilon$ and $U$ are close is intuitive provided $n(\epsilon)$
is large enough, which was rigorously proved in ~\cite{SW_03}.

\section{Interior regularity of functionals of $\tau$}
\label{sec:closed}

Our goal in this section is to obtain interior smoothness of various functionals of the first positive exit time $\tau$ from $U$.  Given the relevant hypoellipticity, the key first step  is to show that the law of the stopped process $\tilde{x}_t$,  introduced in \cref{sec:stopped},  when restricted to Borel subsets of $U$, denoted by $\B_U$, has a density $\tilde{p}_t(x,y)$ with respect to Lebesgue measure on $U$.  Furthermore, the mapping $(t,x,y) \mapsto \tilde{p}_t(x,y) \in C^\infty((0, \infty) \times U \times U)$ and satisfies, respectively, the forward and backward Kolmogorov equations in the classical sense:
 \begin{align}
 \label{eqn:forkol}
 &\partial_t \tilde{p}_t(x,y) = L_y^* \tilde{p}_t(x,y) \,\,\, \text{ on } \,\,\, (0, \infty) \times U, \, x\in U \text{ fixed},\\
  \label{eqn:backkol}&\partial_t \tilde{p}_t(x,y) = L_x \tilde{p}_t(x,y) \,\,\, \text{ on } \,\,\, (0, \infty) \times U, \, y\in U \text{ fixed}.   \end{align}      
This is done in \cref{sec:kol}.  In \cref{sec:exittimes},  we deduce interior smoothness and the equations satisfied by
\begin{align}
x\mapsto \E_x \varphi(\tau): U\to \RR
\end{align}
for some choices of smooth $\varphi:[0, \infty) \to \RR$, for example $\varphi(x)=x^k$, $k\in \N$ or $\varphi(x)= \mathbf{1}_{(t, \infty)}(x)$ for fixed $t$. 
 
 First, however, we need to establish a few auxiliary results.   
\subsection{Auxiliary results}
\begin{Lemma}
\label{lem:approx}
Suppose that conditions  \hyperlink{(U0)}{{\bf(U0)}} and \hyperlink{(NE)}{{\bf(NE)}} are satisfied.   Then, the following assertions hold true.  
\begin{itemize}
\item[(i)]  For any $t>0$, $x\in \X$, and $M > 0$ we have 
$$
\lim_{y \to x}\E \bigg\{\sup_{s\in [0, t]} |x_s(x)- x_s(y)|^2\wedge M\bigg\} = 0 \,,
$$ 
where $s \mapsto x_s(z)$  satisfies~\eqref{eqn:sde} 
with $x_0(z) = z$. 
\item[(ii)]  If $x_*\in \partial U$ is regular, then for any $\epsilon, \delta >0$, there is $\rho > 0$ such that 
$\PP_{x}  \{ \tau \geq \delta \} \leq \epsilon$ for any $x \in U$ with $|x - x_*| < \rho$.  That is, as $x\rightarrow x_* \in \partial U, x\in U$, $\tau(x)\rightarrow \tau(x_*)$ in probability.       
\end{itemize}
\end{Lemma}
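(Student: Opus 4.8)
The two parts are essentially independent and I would attack them separately.

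For part (i), the plan is to use the standard stability estimate for SDEs with smooth coefficients. Fix $x \in \X$, $t > 0$, $M > 0$. Pick a bounded open set $B$ with $\overline{B} \subset \X$ and $x$ in its interior. For $y$ close enough to $x$, both trajectories $x_s(x)$ and $x_s(y)$ start in $B$; introduce the stopping time $\eta = \tau_B(x) \wedge \tau_B(y)$, the first time either trajectory leaves $B$. On the stochastic interval $[0, \eta]$ the coefficients $b, \sigma$ agree with globally Lipschitz, bounded-derivative extensions $\hat b, \hat\sigma$ (as in \cref{rem:nonexpl}), so the usual Gr\"onwall/Burkholder-Davis-Gundy argument gives
\begin{align*}
\E \sup_{s \le t \wedge \eta} |x_s(x) - x_s(y)|^2 \le C(t,B)\, |x - y|^2.
\end{align*}
On the complementary event $\{\eta < t\}$ I bound the supremum crudely by $M$ and use $\PP\{\eta < t\} \to 0$ as $y \to x$, which follows from nonexplosivity plus continuity of paths (more precisely, since $x_s(x)$ a.s.\ stays in a slightly smaller compact set up to time $t$ with probability close to $1$, and the same then holds for $x_s(y)$ by the estimate above on a sub-event, a routine bootstrapping). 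Splitting $\E\{\sup_{s\le t}|x_s(x)-x_s(y)|^2 \wedge M\}$ over $\{\eta \ge t\}$ and $\{\eta < t\}$ and letting $y \to x$ finishes part (i). The only mild subtlety is handling $\{\eta < t\}$ without circularity; one clean way is: first choose $B'$ compactly contained in $B$ with $x \in B'$, note $\PP_x\{\tau_{B'} < t\} =: q$ can be made small by enlarging $B'$ (using nonexplosivity and path continuity — the event $\{\tau_\X > t\}$ has full measure, hence $x_s(x)$ is confined to some compact subset of $\X$ with probability $> 1-q$), and then use the Chebyshev/Markov bound from the Gr\"onwall estimate run up to $\tau_{B'}(x)$ to control the probability that $x_s(y)$ escapes $B$.

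For part (ii), the plan is to combine the definition of a regular boundary point with part (i) and the strong Markov property. Since $x_* \in \partial U$ is regular, $\PP_{x_*}\{\overline\tau = 0\} = 1$, so for the chosen $\delta > 0$ there is a (possibly much smaller) deterministic $\delta' \in (0, \delta)$ with $\PP_{x_*}\{\overline\tau \ge \delta'\} < \epsilon/3$; equivalently $\PP_{x_*}\{x_s(x_*) \notin \overline U \text{ for some } s \le \delta'\} > 1 - \epsilon/3$. Fix such an exit time and note there is an open ball $B_r(x_*) \subset \X$ and an event $A$ with $\PP_{x_*}(A) > 1 - \epsilon/2$ on which $x_s(x_*)$ both exits $\overline U$ before time $\delta'$ and stays inside $B_r(x_*)$ up to that exit time. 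Now use part (i): choosing $\rho > 0$ small and $|x - x_*| < \rho$, the coupled trajectory $x_s(x)$ is uniformly close to $x_s(x_*)$ on $[0, \delta']$ with probability $> 1 - \epsilon/2$, so on the intersection of this event with $A$ the trajectory $x_s(x)$ leaves $\overline U$ — hence leaves $U$ — before time $\delta'$, giving $\tau(x) < \delta' \le \delta$; by inclusion $\PP_x\{\tau \ge \delta\} \le \epsilon$. To make the "leaves $\overline U$ implies leaves $U$" step rigorous one wants the exit to happen at a point in the \emph{open} complement $\overline U^c$ with a definite margin, which can be arranged by shrinking $\delta'$ slightly since a.s.\ on $A$ the path is at positive distance from $\overline U$ at some time $\le \delta'$; the uniform-closeness from (i) then transfers this to $x_s(x)$.

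The main obstacle I anticipate is the bookkeeping in part (ii): cleanly extracting from "$\overline\tau(x_*) = 0$ a.s." a \emph{single} deterministic time $\delta'$ and a \emph{single} compact neighborhood on which a large-probability event sits, so that the coupling estimate of part (i) can be applied at that fixed horizon — and making sure the event "$x_s(x_*)$ exits $\overline U$" is robust enough (i.e.\ involves strictly leaving $\overline U$, not merely touching $\partial U$) to survive a small perturbation of the path. Part (i) itself is standard; its only non-automatic ingredient is the truncation argument that replaces global Lipschitz bounds (unavailable on all of $\X$) with the localized estimate, but the $\wedge M$ in the statement is precisely what makes this painless.
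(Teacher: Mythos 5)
Your proposal is correct and follows essentially the same route as the paper: part (i) localizes to a bounded set, runs the Gr\"onwall/BDG estimate with the Lipschitz truncations, and controls the escape event using nonexplosivity and the $\wedge M$ truncation; part (ii) uses $\overline\tau(x_*)=0$ a.s.\ together with the closeness estimate from (i). The one slightly loose spot is your phrase ``shrinking $\delta'$ slightly'' to get a deterministic margin --- the correct move (and what the paper does) is to shrink the \emph{margin} $\epsilon$, not $\delta'$: set $U_\epsilon^+=\{x\in\X:\mathrm{dist}(x,\overline U)<\epsilon\}$, observe $\tau_{U_\epsilon^+}\downarrow\overline\tau$ a.s.\ as $\epsilon\downarrow 0$, and use continuity of measure to fix an $\epsilon$ with $\PP_{x_*}\{\tau_{U_\epsilon^+}<\delta'\}$ close to $1$; the rest of your argument then goes through verbatim.
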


\begin{proof}

The proof of part (i) follows a standard Gr\"{o}nwall-type comparison argument.  Let $\tau_k = \tau_{\X_k}$ and  we denote the dependence of $\tau_k$ on 
the initial condition $x$ of $x_t$ by writing $\tau_k(x)$.  Observe that for $x,y \in \X$ and $s\leq \tau_k(x) \wedge \tau_k(y)$, the processes $x_s(x)$ and $x_s(y)$ respectively agree pathwise with processes $x_{s,k}(x)$ and $x_{s,k}(y)$ satisfying It\^{o} SDEs with globally Lipschitz coefficients (cf. \cref{rem:nonexpl}).  Thus if $\Delta_t= x_t(x) - x_t(y)$ and $\Delta_{t,k}=x_{t,k}(x)-x_{t,k}(y)$, then for any $\sigma \leq t\wedge \tau_k(x) \wedge \tau_k(y) $ with $t\geq 0$ deterministic, we have the estimate
\begin{align*}
\E \sup_{s\in [0, \sigma]} |\Delta_s|^2 \leq \E \sup_{s\in [0, t]} |\Delta_{s,k}|^2.  
\end{align*} 
Hence, to estimate $\E \sup_{s\in [0, \sigma]} |\Delta_s|^2$ it suffices, for a given deterministic time $t\geq 0$ to estimate $\E \sup_{s\in [0, t]} |\Delta_s|^2$, where $\Delta_t=x_t(x)-x_t(y)$ satisfies
\begin{equation}
\Delta_t =x-y+ \int_0^t [b(x_s(x)) - b(x_s(y)) ] \, ds + \int_0^t [\sigma(x_s(x)) - \sigma(x_s(y))]\, d\mathbf{W}_s
\end{equation} 
with  globally Lipschitz $b, \sigma$ on $\RR^\mathfrak{m}$ and Lipschitz constants depending on $k$.  
Hence, there exists a constant $Q_k>0$ such that 
\begin{align*}
|b(x)-b(y)| + |\sigma(x)-\sigma(y)| \leq Q_k|x-y| \,\,\, \text{ for all } \,\,\, x,y \in \RR^\mathfrak{m}.  
\end{align*}
Using  Doob's maximal inequality and It\^ o isometry, we have 
\begin{align*}
\E\sup_{s\in[0,t]}|\Delta_s|^2 &\leq 9|x-y|^2 + 9 Q_k^2 \E\sup_{s \in[0, t]} \left( \int_0^s |x_u(x)-x_u(y)| \, du\right)^2\\
&\qquad +9 \E\sup_{s\in[0, t]}  \left| \int_0^s  (\sigma(x_u(x)) - \sigma(x_u(y)) )d\mathbf{W}_u \right|^2 \\
&\leq 
9|x-y|^2+ C_k(  t) \int_0^t \E \sup_{s \in [0, u]}|\Delta_s|^2 du  
\end{align*}
for some constant $C_k(t)$ depending only on $k,t$.  
Consequently by Gr\"{o}nwall's inequality, we have 
\begin{equation}\label{eqn:swb}
\E \sup_{s\in[0, t]}|\Delta_s|^2 \leq  9|x-y|^2 e^{C_k(t) t} =: |x-y|^2 D_k(t).\end{equation}

Now if $b, \sigma$ are no longer globally Lipschitz and $\delta_k = \text{dist}(\X_k, \partial \X_{k+1})>0$, 
we have by Chebyshev's inequality and \eqref{eqn:swb}
\begin{align*}
&\E \sup_{s\in[0, t]}|\Delta_s|^2\wedge M\\
 &= \E\sup_{s\in[0, t]}|\Delta_s|^2\wedge M(\textbf{1}\{\tau_k(x) < t\} + \textbf{1}\{\tau_k(x) \geq t\})\\
&\leq M \PP_x\{\tau_k < t\} +  \E_x\sup_{s\in[0, t ]}|\Delta_{s,k}|^2 \mathbf{1} \{\tau_k(x) \geq t, \tau_{k+1}(y) \geq t\} + M \PP \{ \sup_{s\in[0, t ] } |\Delta_{s,k}|^2 > \delta_k^2 \} \\
&\leq  M \PP_x\{\tau_k < t\} +  D_k(t) |x-y|^2 \,  + \frac{|x-y|^2 D_k(t)}{\delta_k^2} M.
\end{align*}
Passing $y\to x$ and then $k \to \infty$ using nonexplosivity of $x_t(x)$, we obtain (i). 

To obtain (ii), suppose there exist $\epsilon_0, \delta_0 > 0$ and
 a sequence $\{x_n\}\subset U$ with $x_n \rightarrow x_*$ such that 
\begin{align*}
\PP_{x_n} \{ \tau \geq \delta_0 \} \geq \epsilon_0
\end{align*}
for all $n$.  For any $\epsilon >0$, let $U^+_\epsilon$ be given by
\begin{align}
\label{eqn:dfup}
U_{\epsilon}^{+}= \{ x\in \X \, : \, \text{dist}(x, \overline{U}) < \epsilon\}.
\end{align}
By path continuity, $\overline{\tau}= \lim_{\epsilon \rightarrow 0} \tau_{U_\epsilon^+}$ almost surely.  Since $x_*$ is regular, there is $\epsilon \in (0, 1)$ so that 
\begin{align*}
\PP_{x_*} \{ \tau_{U_\epsilon^+}< \delta_0/2 \} \geq 1- \epsilon_0/2 \,,
\end{align*} 
and consequently for any $n$
\begin{align*}
\PP \{ \tau(x_n) \geq \delta_0, \tau_{U_{\epsilon}^+}(x_*) < \delta_0/2 \} \geq \epsilon_0/2  \,,
\end{align*}
where the initial condition is indicated in the stopping times above.  By conclusion (i) with $M = 1$, and $\epsilon <1$
for any  large enough $n$ one has $\PP\{ \sup_{s\in [0, \delta_0/2]} |x_s(x_n) -x_s(x_*)| > \epsilon/2\}\leq \epsilon_0/4$.  But then 
\begin{align*}
\frac{\epsilon_0}{2} &\leq  \PP \left\{ \tau(x_n) \geq \delta_0, \tau_{U_{\epsilon}^+}(x_*) < \frac{\delta_0}{2}, \sup_{s\in [0, \delta_0/2]}|x_s(x_n) -x_s(x_*)| \leq \frac{\epsilon}{2} \right\}\\
& \qquad + \PP \left\{\sup_{s\in [0, \delta_0/2]} |x_s(x_n) -x_s(x_*)| > \frac{\epsilon}{2} \right\} \leq 0 + \frac{\epsilon_0}{4}, 
\end{align*}  
a contradiction.  
\end{proof}

In addition to \cref{lem:approx}, we need one more auxiliary result.  Although this result is basic, it is used repeatedly throughout the paper.    
\begin{Lemma}
\label{lem:red}
Let $V \subset \RR^{k}$ be open and let $\LL$ and $\LL_n$, $n\in \N$, be linear second-order differential operators with $C^\infty(V)$ coefficients.  Suppose that $\{v_n\}$ be a uniformly bounded sequence of measurable functions $v_n:V\to \RR$ and that the following conditions are satisfied:\begin{itemize}  
\item[-] For every $\phi \in C_0^\infty(V)$, $(\LL_n -\LL)^* \phi \to 0 $ as $n\to \infty$ in $L^1(V, dx)$ where the $*$ denotes the formal adjoint with respect to $L^2(V, dx)$-inner product;
\item[-] For some measurable function $f$ on $V$ which is bounded on compact subsets of $V$, $\LL_n v_n \to -f$ as $n\rightarrow \infty$ on $V$ in the sense of distributions.
\item[-]  $v_n$ converges to $v_\infty$ in the sense of distributions on $V$ as $n \rightarrow \infty$.   
 \end{itemize}
 Then, $\LL v_{\infty}=-f$ on $V$ in the sense of distributions.  Furthermore, if $\LL$ is hypoelliptic on $V$ and $f\in C^\infty(V)$, then $v_\infty \in C^\infty(V)$ and $\LL v_{\infty} = -f$ on $V$ in the classical sense.            
\end{Lemma}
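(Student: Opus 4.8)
The plan is to verify the distributional identity $\LL v_\infty = -f$ by testing against an arbitrary $\phi \in C_0^\infty(V)$ and pushing both operators onto the test function before taking the limit. Fix such a $\phi$ and let $K := \mathrm{supp}(\phi)$, a compact subset of $V$. Since $\LL$ and each $\LL_n$ are second-order differential operators with $C^\infty(V)$ coefficients, they are local, so $\LL^*\phi$ and $\LL_n^*\phi$ again lie in $C_0^\infty(V)$ with supports contained in $K$; in particular the first hypothesis, which asserts that $(\LL_n-\LL)^*\phi = \LL_n^*\phi - \LL^*\phi \to 0$ in $L^1(V,dx)$, is meaningful and each of these functions is a legitimate test function. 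Writing $\langle\,\cdot\,,\,\cdot\,\rangle$ for the $L^2(V,dx)$ pairing and using that the $v_n$ are (locally integrable) functions, the definition of the distributional action of $\LL_n$ and of the formal adjoint give, for every $n$,
\[
\langle \LL_n v_n,\phi\rangle \;=\; \langle v_n, \LL_n^*\phi\rangle \;=\; \langle v_n,\LL^*\phi\rangle \;+\; \langle v_n,(\LL_n-\LL)^*\phi\rangle .
\]

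Next I would pass to the limit in the two terms on the right. In the first, $\LL^*\phi$ is a fixed element of $C_0^\infty(V)$, so the assumed distributional convergence $v_n\to v_\infty$ yields $\langle v_n,\LL^*\phi\rangle \to \langle v_\infty,\LL^*\phi\rangle = \langle \LL v_\infty,\phi\rangle$. In the second, the uniform bound $M_0 := \sup_n \|v_n\|_{L^\infty(V)} < \infty$ together with the first hypothesis gives $|\langle v_n,(\LL_n-\LL)^*\phi\rangle| \le M_0\,\|(\LL_n-\LL)^*\phi\|_{L^1(V,dx)} \to 0$. Hence $\langle \LL_n v_n,\phi\rangle \to \langle \LL v_\infty,\phi\rangle$. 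On the other hand, the second hypothesis says precisely that $\langle \LL_n v_n,\phi\rangle \to \langle -f,\phi\rangle$. As $\phi \in C_0^\infty(V)$ was arbitrary, this forces $\LL v_\infty = -f$ in the sense of distributions on $V$.

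For the final assertion, assume $\LL$ is hypoelliptic on $V$ and $f\in C^\infty(V)$. Then $\LL v_\infty = -f$ with the right-hand side smooth on all of $V$, so the definition of hypoellipticity applied with $W=V$ gives $v_\infty \in C^\infty(V)$; once $v_\infty$ is a genuine smooth function the distributional identity $\LL v_\infty = -f$ is the pointwise one, i.e. it holds classically. I do not anticipate a real obstacle here, as the argument is a soft one; the only points needing care are checking that $\LL^*\phi$ and $\LL_n^*\phi$ are admissible test functions (so that both the distributional convergence of $v_n$ and the $L^1$ hypothesis can be invoked on them) and keeping the two convergence modes of $\LL_n v_n$ separate — convergence to $-f$ supplied by hypothesis versus convergence to $\LL v_\infty$ extracted above. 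In particular the splitting displayed above does not secretly require $v_n$ to converge in any topology stronger than the distributional one, precisely because $\LL$ is moved entirely onto the test function before the limit is taken.
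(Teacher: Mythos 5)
Your argument is correct and is essentially the same as the paper's: both proofs move $\LL$ onto the test function and split $\langle v_n,\LL^*\phi\rangle$ into $\langle v_n,\LL_n^*\phi\rangle - \langle v_n,(\LL_n-\LL)^*\phi\rangle$, killing the second term via the uniform bound on $v_n$ and the $L^1$ convergence of $(\LL_n-\LL)^*\phi$, and finishing with hypoellipticity. The only cosmetic difference is the direction in which the chain of equalities is written.
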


\begin{proof}
For fixed $\varphi \in C_0^\infty(V)$,  conditions of the statement and the Dominated Convergence Theorem imply
\begin{align*}
\int_{V}  v_{\infty} \LL^* \varphi \, dx =\lim_{n\rightarrow \infty} \int_V v_n \LL^* \varphi \, dx &= \lim_{n \to \infty} \int_{V} v_n \LL_n^* \varphi - \lim_{n \to \infty} \int_{U} v_n (\LL_n - \LL)^* \varphi \, dx \\
&=\int_V -f \varphi \, dx.  
\end{align*} 
Thus $\LL v_{\infty} = -f$ on $V$ in the sense of distributions.  The remaining assertion is an immediate consequence of hypoellipticity.  
\end{proof}

\subsection{The forward and backward Kolmogorov equations for the stopped process}
\label{sec:kol}
Recall that if $x_t$ is nonexplosive, then the stopped process $\tilde{x}_t$, see \eqref{eqn:dfsp}, is a well-defined, continuous-time Markov process distributed on $\overline{U}$.  Moreover, the associated semigroup and transitions are respectively denoted by $\tilde{\mathcal{P}}_t$ and $\tilde{\mathcal{P}}_t(x, \, \cdot \,)$.

Our main result in this section is the following.  
\begin{Theorem}
\label{thm:kol}
Suppose that \hyperlink{(U0)}{{\bf(U0)}}, \hyperlink{(NE)}{{\bf(NE)}}, and \hyperlink{(L2)}{{\bf(L2)}} are satisfied.  
\begin{itemize}
\item[(i)]  For all $x\in U$ and $t>0$, the restriction of the measure $\tilde{\mathcal{P}}_t(x, \, \cdot \,)$ to Borel subsets of $U$ is absolutely continuous with respect to Lebesgue measure on $U$ with density $\tilde{p}_t(x,y)$, and for fixed $x\in U$ the mapping $(t,y) \mapsto \tilde{p}_t(x,y)\in C^\infty((0, \infty) \times U) $.  Furthermore, the forward Kolmogorov equation~\eqref{eqn:forkol} is satisfied in the classical sense. 
\item[(ii)]  If \hyperlink{(L3)}{{\bf(L3)}} is furthermore satisfied, then $(t,x,y) \mapsto \tilde{p}_t(x,y)$ belongs to $C^\infty((0, \infty) \times U\times U)$ and the backward Kolmogorov equation~\eqref{eqn:backkol} is satisfied in the classical sense.
\end{itemize}
\end{Theorem}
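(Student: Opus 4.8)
The plan is to construct the density $\tilde p_t(x,y)$ by a localization/exhaustion argument against the sets $\X_k$ (or $U_n$), exploiting hypoellipticity of $\partial_t - L^*$ via condition \hyperlink{(L2)}{{\bf(L2)}} for part (i) and of $\partial_t - L$ via \hyperlink{(L3)}{{\bf(L3)}} for part (ii). For part (i), fix $x\in U$ and $t>0$. The key observation is that the stopped process $\tilde x_t = x_{t\wedge\tau_0}$ satisfies, for every test function $\phi\in C_0^\infty((0,\infty)\times U)$, a weak form of the Fokker--Planck equation. Indeed, apply Dynkin's formula~\eqref{eqn:Dynkin} in the form of \cref{rem:dynkin} to $\phi(s, x_{s\wedge\tau_0})$: since $\phi$ is compactly supported in $U$, the contributions at times $s\geq \tau_0$ vanish (the process sits on $\partial U$, outside $\mathrm{supp}\,\phi$), and one obtains that the family of measures $\mu_s := \tilde{\mathcal P}_s(x,\cdot)\big|_{\B_U}$ solves $(\partial_t + L^*)\mu = 0$ in the sense of distributions on $(0,\infty)\times U$. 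Now apply hypoellipticity of $\partial_t - L^*$ on $(0,\infty)\times U$ — which is exactly \hyperlink{(L2)}{{\bf(L2)}}, noting that $(\partial_t + L^*)\mu = 0$ is an equation of the form handled by hypoellipticity of the operator $\partial_t - L^*$ acting appropriately (one writes $\mu$ as a distribution in $(t,y)$ and the time-reversed operator is the hypoelliptic one; this is the standard identification used in \cref{prop:niceimpreg}). Hence $\mu_s$ restricted to $(0,\infty)\times U$ has a density $\tilde p_t(x,y)$ with $(t,y)\mapsto \tilde p_t(x,y)\in C^\infty((0,\infty)\times U)$, and $\tilde p$ satisfies the forward Kolmogorov equation~\eqref{eqn:forkol} in the classical sense.

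For part (ii), we need joint smoothness in $(t,x,y)$ and the backward equation. Here the natural route is to bring in \hyperlink{(L3)}{{\bf(L3)}} (hypoellipticity of $\partial_t - L$ on $(0,\infty)\times U$) together with the Chapman--Kolmogorov semigroup identity for $\tilde{\mathcal P}_t$. Write, for $0 < t_0 < t$ and $x,y\in U$,
\begin{align*}
\tilde p_t(x,y) = \int_{\overline U} \tilde{\mathcal P}_{t-t_0}(x, dz)\,\tilde p_{t_0}(z,y),
\end{align*}
and observe that the integrand, as a function of $z$, need not be smooth up to $\partial U$, so one localizes: for $z$ ranging over a bounded open $U_n$ with $\overline{U_n}\subset U$, split the integral over $U_n$ and over $\overline U\setminus U_n$. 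On $U_n$ one uses the already-established regularity of $\tilde p_{t_0}(\cdot,y)$ in the first variable (by symmetry of roles, part (i) applied to $L^*$ in place of $L$ gives smoothness of $z\mapsto \tilde p_{t_0}(z,y)$ under \hyperlink{(L3)}{{\bf(L3)}}), and the tail term is controlled because $\PP_x\{\tilde x_{t-t_0}\in \overline U\setminus U_n\}$ together with the density bound can be made small. The cleaner alternative, which I would actually carry out, is: having part (i) and the forward equation in hand, note $x\mapsto \tilde p_t(x,y) = \lim \tilde p_t^{(n)}$ where $\tilde p_t^{(n)}$ are the transition densities of the process stopped at $\partial U_n$; these solve the backward equation $\partial_t \tilde p^{(n)}_t(x,y) = L_x \tilde p^{(n)}_t(x,y)$ on $(0,\infty)\times U_n$ classically (standard parabolic/hypoelliptic regularity, or a second application of the $\mu$-argument with roles of $x,y$ swapped), the convergence $\tilde p^{(n)}\to \tilde p$ holds in the sense of distributions by monotone/dominated convergence for the underlying measures, and then \cref{lem:red} — applied with $V = (0,\infty)\times U$, $\LL_n = \partial_t - L_x$ restricted to $(0,\infty)\times U_n$ suitably extended, $\LL = \partial_t - L_x$, $f\equiv 0$ — upgrades this to $\partial_t \tilde p_t(x,y) = L_x\tilde p_t(x,y)$ on $(0,\infty)\times U$ in the sense of distributions, hence classically by \hyperlink{(L3)}{{\bf(L3)}}. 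Joint smoothness in $(t,x,y)$ then follows by combining the two one-sided regularities via a Chapman--Kolmogorov splitting at time $t/2$: $\tilde p_t(x,y) = \int_U \tilde p_{t/2}(x,z)\tilde p_{t/2}(z,y)\,dz + (\text{boundary tail})$, where the first term is manifestly jointly smooth (smooth in $(t,x)$ from part (i)-for-$L^*$, smooth in $(t,y)$ from part (i)) and the tail is handled as above.

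The main obstacle I anticipate is the careful justification of the boundary-tail estimates in the Chapman--Kolmogorov splitting — ensuring that the contribution of paths that have reached near $\partial U$ is negligible and, crucially, smooth in the relevant variables, since $\tilde p_{t_0}(z,y)$ degenerates as $z\to\partial U$ and one has no a priori quantitative control there. A clean way around this is to avoid pointwise tail bounds altogether and instead phrase everything at the level of distributions plus \cref{lem:red}, as sketched, so that the only analytic input is the distributional convergence $\tilde p^{(n)}\to\tilde p$ (which comes for free from $\tau_{U_n}\uparrow\tau_0$ and bounded convergence for the measures $\tilde{\mathcal P}_t^{(n)}(x,\cdot)$) and hypoellipticity. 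A secondary technical point is the precise translation between "$\mu_t$ solves $(\partial_t+L^*)\mu=0$" and "hypoellipticity of $\partial_t - L^*$ applies"; this is exactly the identification already used in the proof of \cref{prop:niceimpreg} and should be invoked in the same way.
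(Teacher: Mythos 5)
Your part (i) is essentially the paper's argument: apply Dynkin's formula to test functions $\psi\in C_0^\infty((0,t)\times U)$, use that $\psi$ vanishes on $\partial U$ so the stopped trajectory contributes nothing after $\tau_0$, conclude that $(L_y^*-\partial_t)\,\tilde{\mathcal P}_t(x,\cdot)=0$ in distributions on $(0,\infty)\times U$, and invoke \hyperlink{(L2)}{{\bf(L2)}}. (The sign you wrote, $(\partial_t+L^*)\mu=0$, is harmless because hypoellipticity is insensitive to the sign of the drift field, but the actual equation is $(\partial_t-L^*)\tilde p=0$.)

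Your part (ii) has a genuine gap, and it is precisely the gap that the paper's Step~1 is designed to close. Both of your variants rest on the claim that the backward equation $\partial_t\tilde p=L_x\tilde p$ for the stopped density is available ``by symmetry'' or by ``applying part~(i) to $L^*$ in place of $L$.'' This does not work: $L^*$ is not the generator of any stopped diffusion (it has a zeroth-order term coming from $-\mathrm{div}(b)$ plus derivatives of $\sigma\sigma^T$, so it does not annihilate constants), so there is no probabilistic object to which a part-(i) Dynkin argument can be applied. The forward equation in $y$ comes from the defining Markov structure via test functions; the backward equation in $x$ is a different statement and is not its formal dual when the process is stopped. Likewise, your ``cleaner alternative'' presupposes that the densities $\tilde p^{(n)}$ of the process stopped at $\partial U_n$ already satisfy the backward equation classically by ``standard parabolic/hypoelliptic regularity,'' but the stopped process has coefficients that are discontinuous across $\partial U_n$, so the standard results do not apply to it directly; this is circular. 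The paper resolves this by replacing the stopped process not with other stopped processes but with \emph{non-stopped} processes $x^n_t$ whose coefficients are the smooth cutoffs $b\varphi_n$, $\sigma\varphi_n$ extended to all of $\RR^{\mathfrak m}$. For those, classical backward Kolmogorov theory (\cite{Friedman_75}, \cite{Oksen_13}) gives $C^{1,2}$ regularity of $u_n(t,x)=\E_x\psi(x^n_t)$, which is upgraded by \hyperlink{(L3)}{{\bf(L3)}} to $C^\infty$, and an $L^2$ pathwise estimate (\eqref{eqn:efcc}) passes the limit via \cref{lem:red}. That is the indispensable step missing from your sketch. The paper's Step~2 (mollify $\tilde p_t(x,\cdot)$ in $y$ and let $\epsilon\to 0$ in distributions) plays the role of your ``Lemma \ref{lem:red} with $\tilde p^{(n)}$'' idea; your Chapman--Kolmogorov splitting for joint smoothness, on the other hand, is not the paper's route, and by itself it only yields separate smoothness in $(t,x)$ and in $(t,y)$, not joint $C^\infty$ in $(t,x,y)$, unless one has uniform bounds justifying differentiation under the integral, which are not available a priori.
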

 
 \begin{Remark}
The above result is natural and understood by experts in the field.  However, we found it difficult to locate a complete proof of part (ii) as it is much more subtle than part (i).  While part (i) follows almost immediately from Dynkin's formula, part (ii) requires several nontrivial approximations and steps.  
\end{Remark}
 
\begin{Remark}
Intuitively the result above holds because, while the measure $\tilde{\mathcal{P}}_t(x, \, \cdot \,)$ has a singular component on the boundary if the process exits $U$ by time $t$ with positive probability, 
when restricted to subsets of $U$ this singularity is not seen.    
 \end{Remark}
 
 \begin{Remark}
 \label{rem:extn}
 Note that in the statement above we may take $U=\X$, in which case $\tilde{x}_t$ coincides with the original process $x_t$ on $\X$, and thus the statement is about the law of $x_t$ in $\X$.    
 \end{Remark}

 \begin{proof}[Proof of \cref{thm:kol}]
To prove part (i), fix $t>0$, $x\in U$, and take any $\psi \in C^\infty([0, t] \times U)\cap C^\infty_0((0, t) \times U)$.  Thus, in particular, $\psi$ is compactly supported in $(0, t) \times U$.  
Then, Dynkin's formula yields
\begin{align*}
0=\E_x \psi(t, \tilde{x}_t)= \E_x \psi(t, x_{t\wedge \tau_0}) &= \psi(0, x) + \E_x \int_0^{t\wedge \tau_0} (L+ \partial_s) \psi(s,x_{s}) \, ds\\
&= 0+ \E_x \int_0^t (L+ \partial_s) \psi(s,\tilde{x}_s) \, ds \,,
\end{align*}   
where $\tau_0$ from \eqref{eqn:dotz} is the first exit time from $U$. 
Hence, 
$$
\E_x \int_0^t (L+\partial_s) \psi(s, \tilde{x}_s) =\int_0^t \int_U(L+\partial_s)  \psi(s, y) \tilde{p}_s (x, dy) ds =0
$$
 for any $\psi \in C_0^\infty((0, t) \times U)$.  That is, $(L^*_y-\partial_t)(\tilde{p}_t(x,dy))=0$ on $(0, \infty) \times U$ in the sense of distributions. By hypoellipticity of $\partial_t-L^*$ on $(0, \infty) \times U$ as in \hyperlink{(L2)}{{\bf(L2)}}, part (i) follows. 

Part (ii) is more involved as there are several layers of approximations.  We divide the proof into two main steps. 

\underline{\emph{Step 1}}.  Let $\psi \in C_0^\infty(U)$ and consider the mapping $(t,x)\mapsto u(t,x):= \tilde{\mathcal{P}}_t \psi(x)$.  We claim that 
$u\in C^\infty((0, \infty) \times U)$ and $\partial_t  u= L u$ in the classical sense on $(0, \infty) \times U$.  

\underline{\emph{Step 2}} .We prove part (ii) by taking a sequence of approximations of $\tilde{p}_t(x,y)$ of the form $\tilde{\mathcal{P}}_t \psi_n(x)$, $\psi_n \in C_0^\infty(U)$.  

\emph{Proof of Step 1}.  First observe that since $x_t$ is nonexplosive, the stopped process $\tilde{x}_t= x_{t\wedge \tau_0}$ with initial condition $\tilde{x}_0 =x\in U$ satisfies the It\^{o} SDE  
\begin{align*}
d \tilde{x}_t= b(\tilde{x}_t) \mathbf{1}_U(\tilde{x}_t) \, dt + \sigma(\tilde{x}_t) \mathbf{1}_U(\tilde{x}_t)\, d\mathbf{W}_t .
\end{align*}
We first show $u\in C^\infty((0, \infty) \times U)$ and $\partial_t  u= L u$ in the classical sense on $(0, \infty) \times U$ in the special case when $b, \sigma \in C^\infty(\X)$ are bounded with bounded derivatives of all orders.

Recall that $U_n$ is a sequence of bounded open sets with $\overline{U}_n \subset U$ and $U_n\uparrow U$ as $n\to \infty$.  Suppose that $\varphi_n \in C^\infty(\overline{U};[0,1])$ satisfies $\varphi_n(x)= 1$ on $U_n$ and $\varphi_n(x) =0$ for $x\in U_{n+1}^c$.  Consider a sequence of approximating processes $x_t^n$ with $x_0^n=x$ and 
\begin{align}
dx_t^n&= b(x_t^n) \varphi_n(x_t^n) \, dt + \sigma (x_t^n) \varphi_n(x_t^n) \, d\mathbf{W}_t.  
\end{align}        
\textbf{Claim.}
If $b, \sigma \in C^\infty(\X)$ are bounded with bounded derivatives of all orders, then
for any $t>0$ fixed, 
\begin{equation}\label{eqn:efcc}
\E_x \sup_{r\in [0, t]} |\tilde{x}_r- x_r^n|^2\to 0 \,\,\text{ as } \,\, n\to \infty.
\end{equation}

\begin{proof}[Proof of \eqref{eqn:efcc}.]
Observe that 
\begin{align*}
\tilde{x}_t- x_t^n &= \int_0^t \varphi_n(x_s^n)[ b(\tilde{x}_s)- b(x_s^n) ] \, ds  + \int_0^t \varphi_n(x_s^n)[ \sigma(\tilde{x}_s)- \sigma(x_s^n) ] \, d\mathbf{W}_s\\
&\qquad +
\int_0^t (1- \varphi_n(x_s^n)) \mathbf{1}_U (\tilde{x}_s) b(\tilde{x}_s) \, ds + \int_0^t (1- \varphi_n(x_s^n)) \mathbf{1}_U (\tilde{x}_s) \sigma(\tilde{x}_s) \, d\mathbf{W}_s .\end{align*}  
Hence, squaring both sides, taking the supremum and using Doob's maximal inequality we find that for all $t \leq T$
\begin{align*}
\E \sup_{r\in [0, t]} |\tilde{x}_r - x_r^n|^2 & \leq C_1 \int_0^t \E \sup_{r\in [0, s]} |\tilde{x}_r- x_r^n |^2 \, ds + C_2 \int_0^t \E   [(1- \varphi_n(x_s^n)) \mathbf{1}_U (\tilde{x}_s)]^2 \, ds \,,
\end{align*}
where the constants $C_1=C_1(T, \|b\|_{\text{Lip}}, \|\sigma \|_{\text{Lip}})>0$ and $C_2=C_2(T, \|b\|_{L^\infty} , \|\sigma \|_{L^\infty})>0$ do \emph{not} depend on $n$.  Gr\"{o}nwall's inequality then implies that for all $t\leq T$
\begin{align*}
\E \sup_{r\in [0, t]} |\tilde{x}_r - x_r^n|^2 & \leq C_2  e^{t C_1} \int_0^t \E   [(1- \varphi_n(x_s^n)) \mathbf{1}_U (\tilde{x}_s)]^2 \, ds  \,.
\end{align*}
Next, observe that if $\tilde{x}_s(\omega) \in U$ for some $s \geq 0$, then by path continuity and the definition of the stopped process, $\tilde{x}_u(\omega) \in U_{n_0}$ for all $u\in [0, s]$ for some 
$n_0=n_0(\omega, s) \in \N$.  Hence, by definition, $x_u^n(\omega) \equiv\tilde{x}_u(\omega)$ for all $u\in [0, s]$ and $n\geq n_0$, and consequently by the Bounded Convergence Theorem 
\begin{align*}
\lim_{n \to \infty} \int_0^t \E   [(1- \varphi_n(x_s^n)) \mathbf{1}_U (\tilde{x}_s)]^2 \, ds = 0 \,.
\end{align*}
Hence, $ \E \sup_{r\in [0, t]} |\tilde{x}_r - x_r^n|^2 \to 0$ as $n\to \infty$, establishing \eqref{eqn:efcc}.    
\end{proof}

Let $u_n(t,x) := \E_x \psi(x_t^n)$.  It follows that for fixed $t>0$, the mapping $x\mapsto u_n(t,x)$ belongs to the space $C^2(\RR^\mathfrak{m})$~\cite[Theorem 5.5]{Friedman_75}.  Define an extended version $A_n$ of the generator of $x_t^n$ by
\begin{align}\label{eqn:gld}
A_n g(x):= \lim_{t\rightarrow 0} \frac{\E_x g(x_t^n)- g(x) }{t}\,,
\end{align}
where the domain of $A_n$ consists of all measurable functions $g: \RR^\mathfrak{m}\to \RR$  such that the limit in \eqref{eqn:gld} exists for each $x$.  
Next, by~\cite[Theorem 8.1.1]{Oksen_13}, for fixed $x\in \RR^\mathfrak{m}$, $t\mapsto u_n(t,x) \in C^1(\RR^\mathfrak{m})$ and by Markov property $u_n$ satisfies on $(0, \infty)\times \RR^\mathfrak{m}$ the generalized backward equation
\begin{align*}
\partial_t u_n= A_n u_n .    
\end{align*} 
Since $x\mapsto u_n(t,x)$ is globally $C^2$, Dynkin's formula implies that $A_n u= L_n u$ where $L_n$ is the \emph{classical} generator of $x_t^n$, which is a second-order differential operator with $C^\infty(\RR^\mathfrak{m})$ coefficients.  Moreover, these coefficients agree with the coefficients of $L$ on $U_n$.  
In particular, by \hyperlink{(L3)}{{\bf(L3)}},  $L- \partial_t$ is hypoelliptic on $(0, \infty) \times U_n$, and therefore $u_n\in C^\infty((0, \infty) \times U_n)$ and $\partial_t u_n= L u_n$ on $(0, \infty) \times U_n$ in the classical sense.  
To obtain a similar result for $u(t,x)= \tilde{\mathcal{P}}_t \psi(x)=\E_x \psi(\tilde{x}_t)$, we note that $u_n$ is uniformly bounded.  Furthermore, $u_n \to u$ pointwise as $n\to \infty$ since by Claim
\begin{align}\label{eqn:lcp}
|u_n(t,x)- u(t,x)|^2 \leq \| \psi \|_{\text{Lip}}^2 \E_x |\tilde{x_t} - x_t^n|^2 \to 0
\end{align}
as $n\to \infty$.  By \cref{lem:red},  $u\in C^\infty((0, \infty) \times U)$ and $\partial_t u = L u$ in the classical sense on $(0, \infty) \times U$.   Thus Step 1 is completed in the case when $b, \sigma$ have bounded derivatives of all orders. 

To complete Step 1, assume that $b \in C^\infty(\X; \RR^\mathfrak{m})$ and $\sigma \in C^\infty(\X; M_{ \mathfrak{m} \times r})$ are not necessarily bounded and $x_t$ is nonexplosive.  For the sequence
 $\{U_n\}$ above, note that the restrictions of $b, \sigma$ to $U_n$ can be extended to functions on $\RR^\mathfrak{m}$ which are bounded with bounded derivatives all orders.  Setting $\tau_n= \inf \{ t> 0 \, : \, \tilde{x}_t \notin U_n\}$, we have shown above that if $\tilde{x}_t^n:= \tilde{x}_{t\wedge \tau_{n}}$, then $(t,x) \mapsto \tilde{u}_n(t,x):= \E_x \psi(\tilde{x}_t^n) $ satisfies $\tilde{u}_n \in C^\infty((0, \infty) \times U_n)$ and  $\partial_t  \tilde{u}_n= L \tilde{u}_n$ on $(0, \infty) \times U_n$ in the classical sense.  However, since 
$\psi$ is bounded, then $\tilde{u}_n$ is  also uniformly bounded in $n$ and by \eqref{eqn:lcp}, 
$\tilde{u}_n\to u$ pointwise on $(0, \infty) \times U$.  Step 1 now follows after applying \cref{lem:red}.  

\emph{Proof of Step 2}.  Let $\chi \in C_0^\infty(\RR^\mathfrak{m};[0, \infty))$ with $\int_{\RR^\mathfrak{m}}\chi(x) \, dx =1$.  Let $\chi_\epsilon(x)= \epsilon^{-\mathfrak{m}} \chi(\epsilon^{-1} x)$ and extend $\tilde{p}_t(x,y)$ to be zero for all $(t,x,y) \notin (0, \infty) \times U\times U$. For any $(t,x,y) \in (0, \infty) \times \RR^\mathfrak{m}\times \RR^\mathfrak{m}$, define
\begin{align}
\label{eqn:formueps}
u_\epsilon(t,x,y):= \int_{\RR^\mathfrak{m}} \tilde{p}_t(x,z) \chi_\epsilon( y-z) \, dz.
\end{align}
Consider any bounded open $V\subset U$ with $\overline{V} \subset U$.  
For each fixed $y \in \overline{V}$, there is $\epsilon_0(y) > 0$ small enough such that $z\mapsto \chi_\epsilon(y-z)\in C_0^\infty$ is compactly supported in $U$ for all $\epsilon \in (0, \epsilon_0(y)]$.  It follows by compactness of $\overline{V}$, that we may choose $\epsilon_0>0$ such that $z\mapsto \chi_\epsilon(y-z)$ is compactly supported in $U$ for all $y\in \overline{V}$ and all $\epsilon \in (0, \epsilon_0]$.  Thus, for any $\epsilon \in (0, \epsilon_0]$ and $(t,x) \in (0, \infty) \times U$, the formula~\eqref{eqn:formueps} makes sense and $y\mapsto u_\epsilon(t,x,y)\in C^\infty(V)$.   Also, by Step 1, for each $y \in V$ and $\epsilon \in (0, \epsilon_0]$,
$(t,x) \mapsto u_\epsilon(t, x,y) \in C^\infty((0, \infty) \times U)$ and $\partial_t u_\epsilon (t,x,y) = L_x u_\epsilon (t,x,y)$ on $(0, \infty) \times U$ in the classical sense.  
To conclude the result, it suffices to show
for any $\varphi \in C_0((0, \infty) \times U \times V)$ with $\varphi$ supported in $K\subset (0, \infty) \times U \times V$ compact, that
\begin{align*}
\lim_{\epsilon \to 0} \int_K \varphi(t,x,y)[ u_\epsilon(t,x,y)- \tilde{p}_t(x,y)] dx \, dy \, dt = 0 \,.
\end{align*}
  Indeed, simply replace $\varphi$ in the above formula by $(\partial_t- L_x)^* \varphi$, where $\varphi$ is smooth with compact support.  

First, since $\int_{\RR^\mathfrak{m}} \chi (z) \, dz =1$, a standard substitution gives for all $\epsilon \in (0, \epsilon_0]$ and $y\in V$
\begin{align*}
u_\epsilon(t,x,y)- \tilde{p}_t(x,y)= \int_{\RR^\mathfrak{m}} [ \tilde{p}_t(x, y-\epsilon z) - \tilde{p}_t(x, y)] \chi(z) \, dz.  
\end{align*}  
Next, let $\hat{p}_t(x,y)=\tilde{p}_t(x,y)$ on $K$ and $\hat{p}_t(x,y)=0$ otherwise.  
By Tonelli's theorem, $(t,x,y) \mapsto \hat{p}_t(x,y) \in L^1(\RR \times (\RR^\mathfrak{m})^2)$.  Moreover, 
\begin{align*}
&\bigg|\int_K \varphi(t,x,y)[u_\epsilon(t,x,y) - \tilde{p}_t(x,y)] \, dx \, dy \, dt\bigg| \\
&\leq \| \varphi\|_{L^\infty} \int_{\RR\times (\RR^\mathfrak{m})^3}  | \hat{p}_t(x, y-\epsilon z) - \hat{p}_t(x, y)|\chi(z) dz \, dx \, dy \, dt.\end{align*}
Since $\hat{p}_t \in L^1(\RR\times (\RR^\mathfrak{m})^2)$, for any $\delta >0$ there is $\psi \in C_0^\infty( (\RR^\mathfrak{m})^3)$ within $L^1$ distance $\delta$ of $\hat{p}_t$.  Thus, 
by translation invariance and Fubini's theorem   
\begin{align*}
&\bigg|\int_K \varphi(t,x,y)[u_\epsilon(t,x,y) - \tilde{p}_t(x,y)] \, dx \, dy \, dt\bigg| \\
&\leq 2\delta \| \varphi\|_{L^\infty} + \| \varphi\|_{L^\infty}\int_{(\RR^\mathfrak{m})^4}  |\psi(t,x, y-\epsilon z) - \psi(t,x, y)|\chi(z) dz \, dx \, dy \, dt.  
\end{align*}
Since $\psi$ is compactly supported, passing $\epsilon \to 0$, using Dominated Convergence theorem, and then $\delta \to 0$ finishes the proof of \cref{thm:kol}(ii). 
\end{proof}

\subsection{Interior smoothness of functionals of the first positive exit time}
\label{sec:exittimes}

In this section we analyze quantities of the form
\begin{align}
\label{eqn:phitau}
 \E_x \varphi(\tau), \,\, x\in U,
\end{align}
 for various functions $\varphi$ and $\tau$ as in \eqref{eqn:exitpos}.  The first two choices
  $$\varphi(x) = \mathbf{1}\{ |x| < \infty\} \,\,\, \text{ and } \,\,\, \varphi(x)=x,$$ 
  that respectively yield  in \eqref{eqn:phitau}
  \begin{align}
\label{eqn:exittimes}
\PP_x \{ \tau < \infty \} \qquad \text{ and } \qquad \E_x \tau 
\end{align}     
 are of significant importance for analyzing recurrence and transience in \cref{sec:recurrence} below.     
Throughout, we assume the parabolic H\"{o}rmander condition \hyperlink{(PH)}{{\bf(PH)}} for simplicity, which in particular implies  that \hyperlink{(L1)}{{\bf(L1)}}--\hyperlink{(L4)}{{\bf(L4)}} are satisfied.

\begin{Proposition}
\label{prop:exitprob}
Suppose that  \hyperlink{(U00)}{{\bf(U00)}}, \hyperlink{(NE)}{{\bf(NE)}}, and \hyperlink{(PH)}{{\bf(PH)}} are satisfied.  For any $(t,x) \in (0, \infty) \times U$, define 
\begin{align}
u_1(t,x)= \PP_x \{ \tau > t\} \qquad \text{ and } \qquad u_2(x)= \PP_x \{ \tau = \infty \}.
\end{align}
Then:
\begin{itemize}
\item[(i)]  $u_1 \in C^\infty((0, \infty) \times U)$ and $\partial_t u_1= L u_1 $ on $(0, \infty) \times U$ in the classical sense;
\item[(ii)]  $u_2 \in C^\infty(U)$ and $Lu_2=0$ on $U$ in the classical sense.  
\end{itemize}
\end{Proposition}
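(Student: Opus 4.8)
The strategy is to reduce both parts to \cref{thm:kol} (existence and smoothness of the transition density $\tilde{p}_t$ of the stopped process $\tilde{x}_t$) together with the distributional-limit device of \cref{lem:red}. Recall that \hyperlink{(PH)}{{\bf (PH)}} forces \hyperlink{(L1)}{{\bf (L1)}}--\hyperlink{(L4)}{{\bf (L4)}}; in particular $L$ is hypoelliptic on $U$ and $\partial_t - L$ is hypoelliptic on $(0,\infty)\times U$, so \cref{thm:kol} applies.

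For (i), the first move is the identification $u_1(t,x) = \tilde{\mathcal{P}}_t(x,U)$. Indeed, for $x_0 = x\in U$, path continuity and openness of $U$ give $\tau = \tau_0$, the first hitting time of $U^c$, and, $U^c$ being closed, path continuity again shows $\{\tau_0 > t\} = \{\tilde{x}_t\in U\}$ exactly; hence $u_1(t,x) = \PP_x\{\tilde{x}_t\in U\} = \tilde{\mathcal{P}}_t(x,U) = \int_U \tilde{p}_t(x,y)\,dy$. Next I truncate: along the exhaustion $U_n$, set $v_n(t,x):=\int_{U_n}\tilde{p}_t(x,y)\,dy$. Since $(t,x,y)\mapsto\tilde{p}_t(x,y)$ is $C^\infty$ on $(0,\infty)\times U\times U$, $\overline{U_n}$ is a compact subset of $U$, and $|U_n|<\infty$, differentiation under the integral sign gives $v_n\in C^\infty((0,\infty)\times U)$, and the backward Kolmogorov equation~\eqref{eqn:backkol} gives $(\partial_t - L)v_n = \int_{U_n}(\partial_t - L_x)\tilde{p}_t(x,y)\,dy = 0$ classically on $(0,\infty)\times U$. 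Moreover $0\le v_n\le 1$ and, by monotone convergence, $v_n\uparrow u_1$ pointwise. Applying \cref{lem:red} on the open set $(0,\infty)\times U\subset\RR^{1+\mathfrak{m}}$ with $\LL=\LL_n=\partial_t - L$ and $f\equiv 0$ yields $(\partial_t - L)u_1 = 0$ in the sense of distributions, and hypoellipticity of $\partial_t - L$ on $(0,\infty)\times U$ upgrades this to $u_1\in C^\infty((0,\infty)\times U)$ with $\partial_t u_1 = Lu_1$ in the classical sense, proving (i).

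For (ii), note that $t\mapsto u_1(t,x)$ is nonincreasing and $\{\tau>t\}\downarrow\{\tau=\infty\}$, so $u_1(t,x)\downarrow u_2(x)$ as $t\to\infty$ for every $x\in U$. The idea is to regard $u_2$ as the $t$-independent function $\tilde{u}_2(t,x):=u_2(x)$ on $(0,\infty)\times U$ and to approximate it by bona fide parabolic solutions: put $w_n(t,x):=u_1(t+n,x)$. By part (i), each $w_n\in C^\infty((0,\infty)\times U)$ solves $(\partial_t - L)w_n = 0$ classically, satisfies $0\le w_n\le 1$, and $w_n\downarrow\tilde{u}_2$ pointwise. \cref{lem:red} again (with $\LL=\LL_n=\partial_t - L$ and $f\equiv 0$) gives $(\partial_t - L)\tilde{u}_2 = 0$ distributionally, and hypoellipticity of $\partial_t - L$ on $(0,\infty)\times U$ gives $\tilde{u}_2\in C^\infty((0,\infty)\times U)$ with the equation holding classically. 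Since $\tilde{u}_2$ does not depend on $t$, this says precisely that $u_2\in C^\infty(U)$ and $Lu_2=0$ on $U$, proving (ii).

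I do not expect a serious obstacle: once \cref{thm:kol} and \cref{lem:red} are available the argument is essentially bookkeeping. The two points deserving care are the elementary identity $\{\tau>t\}=\{\tilde{x}_t\in U\}$ for interior starting points --- which is what turns $u_1$ into a transition probability of the stopped process --- and verifying that the monotone and dominated convergence passages remain legitimate when $U$ is \emph{unbounded}; the latter is where nonexplosivity \hyperlink{(NE)}{{\bf (NE)}} and the exhaustions $U_n,\X_n$ enter, though most of that burden has already been discharged inside \cref{thm:kol}.
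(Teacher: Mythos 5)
Your proof of part (i) is essentially the paper's: identify $u_1(t,x)=\tilde{\mathcal{P}}_t(x,U)$, truncate to an exhaustion of $U$ where \cref{thm:kol} plus dominated convergence gives a classical solution of the backward equation, and pass to the limit with \cref{lem:red} and hypoellipticity. For part (ii) you take a mildly different route that is also correct. The paper stays in the elliptic setting: it forms the time average $\bar{u}(t,x)=\int_t^{t+1}\tilde{\mathcal{P}}_s(x,U)\,ds$, tests against $L^*\varphi$ for $\varphi\in C_0^\infty(U)$, observes that the $\partial_s$ contribution reduces to the boundary difference $\tilde{\mathcal{P}}_{t+1}(x,U)-\tilde{\mathcal{P}}_t(x,U)$ which vanishes as $t\to\infty$, and concludes $Lu_2=0$ distributionally on $U$, invoking hypoellipticity of $L$ alone. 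You instead keep the time variable, approximate the lifted function $\tilde{u}_2(t,x)=u_2(x)$ by the translates $w_n(t,x)=u_1(t+n,x)$, apply \cref{lem:red} in the parabolic setting, and invoke hypoellipticity of $\partial_t-L$ on $(0,\infty)\times U$ before discarding the (trivial) $t$-dependence. Both arguments are available under \hyperlink{(PH)}{{\bf(PH)}}, which guarantees hypoellipticity of every operator in \eqref{parhyp:list}; the paper's version needs only the weaker hypothesis that $L$ itself is hypoelliptic on $U$, whereas yours avoids the time-averaging/integration-by-parts bookkeeping at the cost of using the stronger parabolic hypoellipticity. Your monotonicity observations ($\{\tau>t\}\downarrow\{\tau=\infty\}$ and the exact identity $\{\tau_0>t\}=\{\tilde{x}_t\in U\}$ for interior starting points) are correct and are exactly what the paper uses implicitly.
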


\begin{proof}
For (i), observe that for any $x\in U$, $u_1(t,x) = \PP_x \{ \tau >t \}= \tilde{\mathcal{P}}_t(x, U)$ where $\tilde{\mathcal{P}}_t(x, \, \cdot \,)$ is the transition kernel of the stopped process $\tilde{x}_t= x_{t\wedge \tau_0}$.  Let $K\subset U$ be compact and  by \cref{thm:kol}
$\tilde{\mathcal{P}}_t(x, K)= \int_K \tilde{p}_t(x,y) \, dy$.  Furthermore, by \cref{thm:kol} and the Dominated Convergence Theorem, 
it follows that $(t, x) \mapsto \tilde{\mathcal{P}}_t(x,K)$ is $C^\infty((0, \infty) \times U)$ and $\partial_t \tilde{\mathcal{P}}_t(x,K) = L \tilde{\mathcal{P}}_t(x,K)$ on $(0, \infty) \times U$ in the classical sense.   To obtain the same result for $\tilde{\mathcal{P}}_t(x, U)$, let $K_n \subset U$ be a sequence of compact sets with $K_n \uparrow U$ as $n\rightarrow \infty$.  Then, by the monotone convergence theorem, 
the sequence $(\tilde{\mathcal{P}}_t(x,K_n))_n$ is uniformly bounded and converges pointwise  to $\tilde{\mathcal{P}}_t(x, U)$.  The assertion (i) follows after applying \cref{lem:red}.  

To obtain (ii), first observe that $$u_2(x) = \PP_x \{ \tau = \infty\} = \lim_{t\rightarrow \infty} \tilde{\mathcal{P}}_t(x, U).$$  Thus we seek to apply an argument similar to that in \cref{lem:red}.  To this end, note that
\begin{align*}
\bar{u}(t,x):= \int_{t}^{t+1} \tilde{\mathcal{P}}_s(x, U) \, ds \rightarrow u_2(x) \,\,\,\text{ pointwise as } \,\,\, t\to \infty
\end{align*}
and that $\bar{u}(t,x)$ is uniformly bounded on $[0, \infty) \times U$.  Thus, for any function $\varphi \in C_0^\infty(U)$ and any $t>0$, using integration by parts and the Fubini-Tonelli Theorem we have
\begin{align*}
0 &= \int_t^{t+1} \int_U (L- \partial_s) \tilde{\mathcal{P}}_s(x, U) \varphi(x) \, dx \,ds \\
&= \int_U \bar{u}(t,x) L^* \varphi(x) \, dx - \int_U [\tilde{\mathcal{P}}_{t+1}(x, U) - \tilde{\mathcal{P}}_t(x, U)] \varphi(x) \, dx.  
\end{align*}
By passing $t\rightarrow \infty$ and 
using the Dominated Convergence Theorem,  $u_2$ solves $L u_2=0$ in the sense of distributions on $U$.  Part (ii) follows by hypoellipticity of $L$ on $U$.   
\end{proof}

We next state and prove a regularity result for $\E_x \tau$; that is, for $\varphi(x)=x$ in~\eqref{eqn:phitau}.  
\begin{Proposition}
\label{prop:expexit}
Suppose that  \hyperlink{(U00)}{{\bf(U00)}}, \hyperlink{(NE)}{{\bf(NE)}}, and \hyperlink{(PH)}{{\bf(PH)}} are satisfied and $\PP_x \{ \tau < \infty \} =1$ for all $x\in U$. 
\begin{itemize}
\item[(i)]  If $U_0\subset U$ is bounded open with $\overline{U_0}\subset U$, then $v_0(x):=\E_x \tau_{U_0}$ is bounded on $\overline{U_0}$.  Furthermore, $v_0 \in C^\infty(U_0)$ and $ Lv_0=-1$ on $U_0$ in the classical sense.  
\item[(ii)]  Let $U_0 \subset U$ be an open set and suppose that $v(x):=\E_x \tau $ is finite on a dense set $D\subset U_0$.  Then, $v \in C^\infty(U_0)$ and $v$ solves $Lv=-1$ on $U_0$ in the classical sense.  \end{itemize}   
\end{Proposition}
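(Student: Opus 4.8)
The plan is to handle both parts by the same device: approximate the (a priori possibly infinite) functional of $\tau$ from below by the bounded functionals $\E_\cdot(\tau\wedge n)$, show these are smooth and solve a Poisson equation $L\,\cdot=-f_n$ with $f_n\to 1$ in a controlled way, and pass to the limit via \cref{lem:red}. What is new in each part is a quantitative integrability bound on the approximants --- boundedness of $v_0$ on $\overline{U_0}$ in (i), and local boundedness of $v$ on $U_0$ in (ii) --- and in (ii) this is exactly where the hypothesis that $v$ is finite on a dense set must be exploited.

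\emph{Part (i).} First I would show $v_0=\E_\cdot\tau_{U_0}$ is bounded on $\overline{U_0}$. By assumption $U$ satisfies \textbf{(U00)}, \textbf{(NE)}, \textbf{(PH)}, so \cref{prop:exitprob}(i) gives $u_1(t,x)=\PP_x\{\tau>t\}\in C^\infty((0,\infty)\times U)$; in particular $x\mapsto u_1(t,x)$ is continuous on the compact set $\overline{U_0}\subset U$, while $t\mapsto u_1(t,x)$ is nonincreasing with $u_1(t,x)\downarrow\PP_x\{\tau=\infty\}=0$ by hypothesis. Dini's theorem gives uniform convergence on $\overline{U_0}$, hence $q:=\sup_{\overline{U_0}}\PP_x\{\tau>T\}<1$ for some $T>0$; since $U_0\subset U$ forces $\tau_{U_0}\le\tau$, it follows that $\sup_{\overline{U_0}}\PP_x\{\tau_{U_0}>T\}\le q$, and iterating with the Markov property at time $T$ (using $x_T\in\overline{U_0}$ on $\{\tau_{U_0}>T\}$) gives $\PP_x\{\tau_{U_0}>kT\}\le q^k$, so $v_0(x)=\int_0^\infty\PP_x\{\tau_{U_0}>t\}\,dt\le T/(1-q)$ on $\overline{U_0}$. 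Now $U_0$ is itself a bounded open set with $\overline{U_0}\subset U\subset\X$, hence satisfies \textbf{(U00)}, \textbf{(NE)}, \textbf{(PH)}, and \cref{prop:exitprob}(i) applied to $U_0$ gives $u_1^{U_0}(t,x)=\PP_x\{\tau_{U_0}>t\}\in C^\infty((0,\infty)\times U_0)$ with $\partial_t u_1^{U_0}=Lu_1^{U_0}$ there. Put $v_0^{(n)}(x)=\E_x(\tau_{U_0}\wedge n)=\int_0^n u_1^{U_0}(t,x)\,dt$; testing against $\varphi\in C_0^\infty(U_0)$, Fubini together with the monotonicity of $t\mapsto u_1^{U_0}(t,x)$ (which yields $\partial_t u_1^{U_0}(\cdot,x)\in L^1(0,n)$ with $\int_0^n\partial_t u_1^{U_0}\,dt=u_1^{U_0}(n,x)-1$) give $Lv_0^{(n)}=-f_n$ in the distributional sense, where $f_n=\PP_\cdot\{\tau_{U_0}\le n\}\in C^\infty(U_0)$. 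Since $\tau_{U_0}\le\tau<\infty$ a.s., $f_n\uparrow 1$ pointwise and boundedly while $v_0^{(n)}\uparrow v_0$ with $\{v_0^{(n)}\}$ uniformly bounded; \cref{lem:red} (with $\LL_n=\LL=L$, $f\equiv 1$) then yields $Lv_0=-1$ on $U_0$ in the distributional sense, hence classically with $v_0\in C^\infty(U_0)$ by hypoellipticity of $L$ on $U_0$.

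\emph{Part (ii).} I would localize: it suffices to prove $v\in C^\infty(B)$ and $Lv=-1$ on every ball $B$ with $\overline B\subset U_0$. Fix such a $B$; since $B\subset U$ we have $\tau_B\le\tau$, so the strong Markov property at $\tau_B$ gives, for $x\in B$, $v(x)=\E_x\tau_B+\E_x v(x_{\tau_B})=:w_B(x)+h_B(x)$, where by part (i) $w_B=\E_\cdot\tau_B$ is bounded on $\overline B$, lies in $C^\infty(B)$, and solves $Lw_B=-1$, and $h_B\ge 0$. Writing $v^{(n)}=\E_\cdot(\tau\wedge n)$ --- which, as in part (i) applied to $U$, is smooth on $U$ with $Lv^{(n)}=-f_n$, $f_n=\PP_\cdot\{\tau\le n\}\in C^\infty(U)$ --- Dynkin's formula on $B$ decomposes $v^{(n)}=\E_\cdot\!\big[\textstyle\int_0^{\tau_B}f_n(x_s)\,ds\big]+\E_\cdot v^{(n)}(x_{\tau_B})=:g_n+\Phi_n$, where $g_n$ is the Poisson solution with bounded smooth data $f_n$ (smooth on $B$ with $Lg_n=-f_n$, by the argument of part (i) via \cref{thm:kol}), so that $\Phi_n=v^{(n)}-g_n$ is nonnegative and $L$-harmonic on $B$. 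Since $f_n\uparrow 1$ we have $g_n\uparrow w_B$, hence $\Phi_n\uparrow h_B$, and $\Phi_n(x_1)\le v(x_1)<\infty$ for $x_1\in D\cap B$ (nonempty, $D$ being dense). By the Harnack inequality for nonnegative $L$-harmonic functions, valid under the parabolic H\"ormander condition (cf.~\cite{Bony_69}), and the ensuing Harnack principle for increasing sequences, $\{\Phi_n\}$ converges locally uniformly on the connected set $B$ to a finite $L$-harmonic, hence smooth, limit, which must be $h_B$. Therefore $v=w_B+h_B\in C^\infty(B)$ with $Lv=Lw_B+Lh_B=-1$ on $B$, and letting $B$ range over all balls compactly contained in $U_0$ proves the claim.

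\emph{The main obstacle.} Part (i) is essentially assembly once the uniform tail estimate is in hand. The crux is in part (ii): one must propagate finiteness of $v$ from the dense set $D$ to all of $U_0$ --- equivalently, establish $v\in L^1_{\mathrm{loc}}(U_0)$ --- and this genuinely requires the degenerate-elliptic Harnack inequality; no soft argument (lower semicontinuity of $v=\sup_n\E_\cdot(\tau\wedge n)$, finiteness on a dense set, or the Markov decomposition alone) can suffice, since an increasing supremum of smooth nonnegative functions may be finite on a dense set yet fail to be locally integrable. A secondary technical point is the smoothness of the Poisson/Dirichlet functionals (the data $f_n$ being only bounded and smooth, and the $L$-harmonicity of $\Phi_n$), which is handled exactly as in part (i) and \cref{prop:exitprob}(ii) through the smooth transition density of the stopped process supplied by \cref{thm:kol}.
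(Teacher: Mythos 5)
Your proof is correct and reaches the same conclusion, but by a genuinely different decomposition in part (ii). For part (i) the differences are cosmetic: you obtain the uniform tail estimate via Dini's theorem and integrate from $0$ to $n$ (handling the endpoint $t=0^+$ via the monotone boundary value $u_1^{U_0}(0^+,x)=1$), whereas the paper uses a finite subcover argument and truncates to $\int_{1/n}^n$, avoiding the endpoint altogether; both feed into \cref{lem:red} identically. For part (ii) the paper exhausts $U$ by $U_n\uparrow U$, sets $w_n=\E_\cdot\tau_{U_n}$, and applies \cref{thm:harnack} to the nonnegative $L$-harmonic differences $z_n=w_n-w_m$ (bounded at the dense set $D$ because $z_n\le w_n\le v$) to get uniform derivative bounds, locally uniform convergence of $w_n$ to a smooth $\overline v$ solving $L\overline v=-1$, and finally $\overline v=v$ by monotone convergence. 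You instead exhaust time by $\tau\wedge n$, use the strong Markov decomposition $v=\E_\cdot\tau_B+\E_\cdot v(x_{\tau_B})$ on a fixed ball $B$, and apply Harnack to the increasing nonnegative $L$-harmonic sequence $\Phi_n=\E_\cdot v^{(n)}(x_{\tau_B})$. Both approaches ultimately use the same Harnack-for-increasing-sequences mechanism; the tradeoff is that your route also requires smoothness of $g_n=\E_\cdot\int_0^{\tau_B}f_n(x_s)\,ds$ with $Lg_n=-f_n$, i.e.\ an argument of the type in \cref{thm:poissonrev}, whereas the paper's exhaustion needs only part (i) of the present proposition. One precision worth flagging: you cite Bony~\cite{Bony_69} for the Harnack inequality ``under the parabolic H\"ormander condition,'' but Bony's Theorem~7.1 assumes more than \hyperlink{(PH)}{\bf(PH)}, \hyperlink{(U00)}{\bf(U00)}, \hyperlink{(NE)}{\bf(NE)} (in particular a well-posed Dirichlet problem with noise normal to $\partial U$), which is not available in this generality. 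The inequality you actually need is the paper's generalization \cref{thm:harnack}, and it is precisely to make part (ii) go through that the paper defers its proof until after \cref{thm:harnack} is established.
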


\begin{Remark}
The technical hypothesis in (ii) that $v$ is finite on a dense set and not  everywhere 
is needed below in \cref{sec:recurrence}.  We defer the proof of \cref{prop:expexit}(ii) until after we prove a hypoelliptic Harnack inequality (\cref{thm:harnack}).   
\end{Remark}

\begin{proof}[Proof of \cref{prop:expexit}(i)]
Let $\sigma= \tau_{U_0}$ and note that $\PP_x\{ \tau< \infty \} =1$ imply that for all $x\in \overline{U_0}$, there exists $t>0, \alpha\in (0,1/2)$ such that $\PP_x\{ \sigma > t \} \leq (1-2\alpha)$.  By \cref{prop:exitprob}(i), there exists $\epsilon =\epsilon(x)>0$ such that $\PP_y\{ \sigma > t \} \leq(1- \alpha)$ for all $y\in B_\epsilon(x)$.  Using compactness of $\overline{U_0}$ and taking a finite subcover, it follows that there exists $t_*>0, \alpha_*\in (0,1)$ such that $\PP_y \{ \sigma > t_* \} \leq (1-\alpha_*)$ for all $y\in \overline{U_0}$.  Since $\tau $ is almost surely finite, so is $\sigma$ by 
$U_0 \subset U$ and
path continuity.  Thus, for any $x\in \overline{U_0}$ we have
\begin{align}
\label{eqn:expsig}
\E_x \sigma = \sum_{m=1}^\infty \E_x \sigma \mathbf{1} \{ \sigma \in [(m-1)t_*, m t_*)\} \leq 3t_*+ \sum_{m=3}^\infty m t_* \PP_x\{ \sigma > (m-2) t_* \}.   
\end{align}
By the Markov property, for $x\in \overline{U_0}$ and $m\in \N$ we have 
\begin{align*}
\PP_x\{ \sigma > m t_*\} = \E_x [\E_x \mathbf{1} \{ \sigma > m t_* \} | \mathcal{F}_{(m-1)t_*} ] &= \E_x \mathbf{1} \{ \sigma > (m-1) t_* \} \PP_{x_{(m-1) t_*}} \{ \sigma > t_* \}\\
& \leq (1- \alpha_*) \PP_x\{ \sigma > (m-1) t_*\}\leq (1- \alpha_*)^m
\end{align*}    
and~\eqref{eqn:expsig} implies for all $x\in \overline{U_0}$
\begin{align*}
\E_x \sigma \leq 3 t_* + \sum_{m=3}^\infty m t_* (1- \alpha_*)^{m-2}, 
\end{align*}
which is finite and bounded independently of $x\in \overline{U_0}$.   

Next,  in order to show that $v_0\in C^\infty(U_0)$ and satisfies $Lv_0=-1$ on $U_0$ in the classical sense, observe that 
\begin{align*}
v_0(x)= \E_x \sigma = \int_0^\infty \PP_x \{ \sigma > t \} \, dt =: \int_0^\infty u(t,x) \, dt.   
\end{align*} 
\cref{prop:exitprob} yields $u\in C^\infty((0, \infty) \times U_0)$ and $L u =\partial_t u$ on $U_0$ in the classical sense.  Thus if $v_{n}(x) = \int_{1/n}^n u(t,x) \, dt$ for $n\in \N$, it follows that $v_{n} \in C^\infty(U_0)$ and for all $x\in U_0$
\begin{align*}
L v_{n}(x) = \int_{\frac{1}{n}}^n L u(t,x) \, dt &= \int_{\frac{1}{n}}^n \partial_t u(t, x) \, dt \\
&= u(n, x) - u(1/n, x) = \PP_x \{ \sigma > n \} - \PP_x \{ \sigma > 1/n \} \to -1  
\end{align*}
as $n\to \infty$ in the sense of distributions on $U_0$.  
Since the sequence $v_n$ is uniformly bounded by bounded $v_0$ and $v_n\rightarrow v_0$ pointwise on $U_0$, the result follows from \cref{lem:red}.  
\end{proof}

We next consider the expression~\eqref{eqn:phitau} for more general $\varphi:[0, \infty) \rightarrow \RR$.  First we need an auxiliary lemma, which is a representation result for the expected value of certain random variables. 
\begin{Lemma}
\label{lem:exprep}
Suppose that \hyperlink{(U00)}{{\bf(U00)}} and \hyperlink{(NE)}{{\bf(NE)}} are met and let $\varphi:[0, \infty) \to \RR$ be $C^1$, strictly increasing and satisfy $\varphi(t) \to \infty$ as $t\to \infty$.  Then 
\begin{align}
\label{eqn:exprep}
\E_x \varphi(\tau) = \varphi(0)+ \int_0^\infty \varphi'(t) \PP_x \{ \tau > t \} \, dt.  
\end{align}    
\end{Lemma}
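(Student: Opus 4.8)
The identity \eqref{eqn:exprep} is, at bottom, Tonelli's theorem applied to a pathwise rewriting of $\varphi(\tau)$ via the fundamental theorem of calculus. Since $\varphi\in C^1$ is strictly increasing with $\varphi(t)\to\infty$, adopt the convention $\varphi(\infty):=+\infty$. Then for every $t\in[0,\infty]$ one has the (extended-real) identity $\varphi(t)=\varphi(0)+\int_0^\infty \varphi'(s)\,\mathbf{1}\{s<t\}\,ds$, where the integral is a well-defined element of $[0,\infty]$ because $\varphi'\ge 0$ on $[0,\infty)$ (note strict monotonicity does not force $\varphi'>0$ pointwise). Recalling from \eqref{eqn:exitpos} that $\tau$ is a $[0,\infty]$-valued random variable (well defined under \hyperlink{(NE)}{{\bf(NE)}}), evaluation at $t=\tau(\omega)$ yields the pathwise identity
$$\varphi(\tau)=\varphi(0)+\int_0^\infty \varphi'(s)\,\mathbf{1}\{s<\tau\}\,ds,$$
valid on all of $\Omega$, with both $\varphi(\tau)$ and the integral taking values in $[\varphi(0),\infty]$.

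Next I would take $\E_x$ of both sides. Since $\varphi(\tau)-\varphi(0)\ge 0$, the quantity $\E_x[\varphi(\tau)-\varphi(0)]$ is a well-defined element of $[0,\infty]$, so $\E_x\varphi(\tau)=\varphi(0)+\E_x\int_0^\infty \varphi'(s)\,\mathbf{1}\{s<\tau\}\,ds$. The map $(s,\omega)\mapsto \varphi'(s)\,\mathbf{1}\{s<\tau(\omega)\}$ is nonnegative and jointly measurable (the set $\{(s,\omega):s<\tau(\omega)\}$ is a countable union of measurable rectangles), so Tonelli's theorem permits the interchange
$$\E_x \int_0^\infty \varphi'(s)\,\mathbf{1}\{s<\tau\}\,ds=\int_0^\infty \varphi'(s)\,\E_x\mathbf{1}\{s<\tau\}\,ds=\int_0^\infty \varphi'(s)\,\PP_x\{\tau>s\}\,ds,$$
which is exactly \eqref{eqn:exprep}.

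Finally I would remark that this derivation automatically covers the degenerate situations: if $\PP_x\{\tau=\infty\}>0$, then the right-hand side of \eqref{eqn:exprep} is at least $\PP_x\{\tau=\infty\}\int_0^\infty\varphi'(s)\,ds=+\infty$, while the left-hand side is $+\infty$ as well by the convention $\varphi(\infty)=+\infty$; the same holds whenever the right-hand integral diverges. There is no genuine obstacle here — the only points needing attention are purely bookkeeping: fixing the convention at $\{\tau=\infty\}$, verifying the joint measurability required for Tonelli, and using $\varphi'\ge 0$ rather than $\varphi'>0$.
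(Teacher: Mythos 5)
Your proof is correct, and it takes a genuinely different route from the paper's. The paper starts from the layer-cake (tail) representation applied to the truncation $\varphi(\tau)\wedge N$, namely $\E_x(\varphi(\tau)\wedge N)=\int_0^N \PP_x\{\varphi(\tau)>t\}\,dt$, then performs the substitution $t\mapsto\varphi(t)$ (claiming a $C^1$ inverse $\varphi^{-1}$), and finally sends $N\to\infty$ by monotone convergence. You instead write $\varphi(\tau)-\varphi(0)=\int_0^\infty\varphi'(s)\mathbf{1}\{s<\tau\}\,ds$ pathwise via the fundamental theorem of calculus (extended to $\tau=\infty$ by monotone convergence), and then apply Tonelli; your joint-measurability remark, that $\{(s,\omega):s<\tau(\omega)\}$ is a countable union of measurable rectangles $[0,q)\times\{\tau\geq q\}$ over rationals $q$, is exactly right. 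Your approach is slightly cleaner in one respect: the paper's assertion that $\varphi^{-1}$ is $C^1$ implicitly requires $\varphi'>0$ everywhere, which, as you note, strict monotonicity alone does not give (e.g.\ $\varphi(t)=t+t^3$ is fine, but $\varphi(t)=t^3$ on $[0,\infty)$ has $\varphi'(0)=0$). The paper's substitution is still valid because one is integrating a nonnegative function against an increasing $C^1$ map, so the conclusion stands, but your Fubini--Tonelli route sidesteps the inverse entirely and makes the hypothesis $\varphi'\geq 0$ visibly sufficient. The paper's route, by contrast, makes the role of the distribution function $\PP_x\{\tau>t\}$ appear immediately and requires no verification of joint measurability. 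Both are fine; yours is arguably the more robust presentation.
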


\begin{proof}
Note that by shifting the formula~\eqref{eqn:exprep}, it suffices to assume that $\varphi(0)= 0$.  Since $\varphi$ is $C^1$, strictly increasing with $\varphi(t) \to \infty$ as $t\to \infty$, $\varphi$ has a $C^1$ inverse $\varphi^{-1}$ mapping $[0, \infty)$ onto $[0, \infty)$.  Moreover, $\varphi^{-1}$ is strictly increasing on $[ 0, \infty)$.  It thus follows that $ \varphi^{-1}(t)\to \infty$ as $t\to \infty$.  Then, 
\begin{align*}
\E_x (\varphi(\tau)\wedge N) = \int_0^N \PP_x \{ \varphi(\tau) > t \} \, dt &=\int_{0}^{N}\PP_x \{ \tau > \varphi^{-1}(t) \} \, dt \\
&=  \int_{0}^{\varphi^{-1}(N)}\varphi'(t) \PP_x \{ \tau > t \} \, dt.\end{align*}  
Passing $N\to \infty$ and using the Monotone Convergence Theorem finishes the proof.  
\end{proof}

Next, we state and prove our main result for general $\varphi$.  

\begin{Proposition}
\label{prop:generalphi}
Suppose that \hyperlink{(U00)}{{\bf(U00)}}, \hyperlink{(NE)}{{\bf(NE)}}, and \hyperlink{(PH)}{{\bf(PH)}} are satisfied.  Let $\varphi\in C^2([0, \infty); \RR)$ be strictly increasing with $\varphi(t) \to \infty$ as $t\to \infty$ with derivative $\varphi'\in C^1([0, \infty); \RR)$ which is strictly increasing with $\varphi'(t) \to \infty$ as $t\to \infty$.  Suppose furthermore that  
$$v_1(x):=\E_x \varphi(\tau) \qquad \text{ and } \qquad v_2(x)= \E_x \varphi'(\tau)$$ are bounded on compact subsets of $U$.  Then $Lv_1= -v_2$ on $U$ in the sense of distributions.    
\end{Proposition}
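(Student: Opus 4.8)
The plan is to turn $v_1$ into a classical parabolic object using the integral representation of Lemma~\ref{lem:exprep}, integrate by parts in the time variable, and then pass to the limit with Lemma~\ref{lem:red}. First I would set $u(t,x):=\PP_x\{\tau>t\}$; by Proposition~\ref{prop:exitprob}(i) this satisfies $u\in C^\infty((0,\infty)\times U)$ and $\partial_t u=Lu$ on $(0,\infty)\times U$ in the classical sense. Since $\varphi$ is strictly increasing, $\varphi'\ge 0$, and since $\varphi'$ is $C^1$ and strictly increasing, $\varphi'':=(\varphi')'\ge 0$; moreover both $\varphi$ and $\varphi'$ meet the hypotheses of Lemma~\ref{lem:exprep}, so
\[
v_1(x)=\varphi(0)+\int_0^\infty\varphi'(t)\,u(t,x)\,dt,\qquad
v_2(x)=\varphi'(0)+\int_0^\infty\varphi''(t)\,u(t,x)\,dt ,
\]
and in particular $\int_0^\infty\varphi''(t)u(t,x)\,dt=v_2(x)-\varphi'(0)<\infty$ and $0\le v_1(x)-\varphi(0)=\int_0^\infty\varphi'(t)u(t,x)\,dt<\infty$ for every $x\in U$.

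Next I would introduce the truncations $v_1^n(x):=\int_{1/n}^{\,n}\varphi'(t)\,u(t,x)\,dt$ for $n\in\N$. Since $[1/n,n]$ is a compact subinterval of $(0,\infty)$ on which $u$ and its $x$-derivatives are smooth, differentiation under the integral is legitimate, so $v_1^n\in C^\infty(U)$, and using $L_xu=\partial_tu$ followed by integration by parts in $t$,
\[
Lv_1^n(x)=\int_{1/n}^{\,n}\varphi'(t)\,\partial_tu(t,x)\,dt
=\varphi'(n)\,u(n,x)-\varphi'(1/n)\,u(1/n,x)-\int_{1/n}^{\,n}\varphi''(t)\,u(t,x)\,dt .
\]
The task is then to identify the limit of each of the three terms as $n\to\infty$. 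For the term at $t=1/n$: since $U$ is open, $\tau>0$ holds $\PP_x$-a.s.\ for $x\in U$, so $u(1/n,x)\uparrow 1$ and $\varphi'(1/n)u(1/n,x)\to\varphi'(0)$. For the integral term, $\varphi''\ge 0$ and monotone convergence give $\int_{1/n}^{\,n}\varphi''(t)u(t,x)\,dt\to v_2(x)-\varphi'(0)$.

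The delicate term is $\varphi'(n)u(n,x)$, and I expect this to be the main obstacle. Here I would use that $\varphi'$ is strictly increasing, so $\{\tau>n\}=\{\varphi'(\tau)>\varphi'(n)\}$ and hence $\varphi'(n)u(n,x)=\varphi'(n)\PP_x\{\varphi'(\tau)>\varphi'(n)\}\le\E_x\big[\varphi'(\tau)\,\mathbf{1}\{\varphi'(\tau)>\varphi'(n)\}\big]$; because $\E_x\varphi'(\tau)=v_2(x)<\infty$, the nonnegative random variable $\varphi'(\tau)$ is integrable, so this upper bound tends to $0$ as $\varphi'(n)\to\infty$. Combining the three limits gives $Lv_1^n(x)\to-v_2(x)$ pointwise on $U$. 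Along the way the same three estimates yield $|Lv_1^n(x)|\le 2v_2(x)+\varphi'(1)$ for $n\ge 1$ and $0\le v_1^n(x)\le v_1(x)-\varphi(0)$, both bounded on compact subsets of $U$. To finish, I would fix any bounded open $V$ with $\overline V\subset U$ and apply Lemma~\ref{lem:red} with $\LL_n=\LL=L$, $v_n=v_1^n$ and $f=v_2$: the adjoint hypothesis is vacuous, the uniform bound on $\overline V$ together with the dominated convergence theorem upgrades the pointwise convergences to convergence in the sense of distributions on $V$, and $v_2$ is bounded on $\overline V$; this gives $L(v_1-\varphi(0))=-v_2$ on $V$ in the sense of distributions, and letting $V\uparrow U$ together with $L\varphi(0)=0$ yields $Lv_1=-v_2$ on $U$, as desired.
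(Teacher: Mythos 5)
Your argument reproduces the paper's proof essentially step for step: both use \cref{lem:exprep} to pass to the representation $v_1(x)=\varphi(0)+\int_0^\infty\varphi'(t)\PP_x\{\tau>t\}\,dt$, truncate to $[1/n,n]$, invoke \cref{prop:exitprob} for interior smoothness of $\PP_x\{\tau>t\}$, swap $L_x$ for $\partial_t$, integrate by parts in $t$, identify the three limits exactly as you do (with the same bound $\varphi'(n)\PP_x\{\tau>n\}\le\E_x[\varphi'(\tau)\mathbf 1\{\tau\ge n\}]\to 0$), and finish with \cref{lem:red}. Your version is slightly more explicit about the uniform $L^\infty_{\mathrm{loc}}$ domination needed to upgrade pointwise convergence of $Lv_1^n$ to distributional convergence and about localizing to a bounded $V\Subset U$ before applying \cref{lem:red}, which is a welcome clarification rather than a deviation.
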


\begin{Remark}
Compare the equation $Lv_1 = - v_2$ with $Lv=-1$ when $\varphi(x)=x$.  Although such a $\varphi$ does not satisfy the hypotheses above, we still have the analogous conclusion.    
\end{Remark}

\begin{proof}[Proof of \cref{prop:generalphi}]
By \cref{lem:exprep}, we have 
\begin{align*}
v_1(x)= \varphi(0)+ \int_0^\infty \varphi'(t) \PP_x\{ \tau > t \} \, dt \qquad \text{ and } \qquad v_2(x)=\varphi'(0)+ \int_0^\infty \varphi''(t) \PP_x\{ \tau > t \} \, dt. 
\end{align*}
In the latter formula, simply apply \cref{lem:exprep} to $\varphi'$ instead of $\varphi$. 
Consider the sequence of functions on $U$ given by
\begin{align*}
v_{1,n}(x):= \varphi(0) + \int_{1/n}^n \varphi'(t) \PP_x\{ \tau > t \} \, dt \,.
\end{align*}
Observe that by \cref{prop:exitprob}, $v_{1,n} \in C^\infty(U)$ and 
\begin{align*}
L v_{1,n}(x) = \int_{1/n}^n \varphi'(t) L_x (\PP_x\{ \tau > t \}) \, dt&= \int_{1/n}^n \varphi'(t) \partial_t (\PP_x\{ \tau > t \}) \, dt\\
&= \varphi'(n) \PP_x \{ \tau > n \} - \varphi'(1/n) \PP_x \{ \tau > 1/n\} \\
&\qquad - \int_{1/n}^n \varphi''(t) \PP_x \{ \tau > t \} \, dt \,.
\end{align*}
Since for any $x\in U$ we have $\E_x \varphi'(\tau) < \infty$, it follows that 
\begin{equation}
\lim_{n \to \infty} \varphi'(n) \PP_x \{ \tau > n \} \leq  \lim_{n\to \infty} \E_x \varphi'(\tau)  \mathbf{1}\{ \tau \geq n\} = 0
\end{equation}
on $U$.  Furthermore, since $ \PP_x \{ \tau = 0 \} = 0$ for any $x\in U$, it also follows that on $U$
\begin{equation}
\lim_{n \to \infty}  \varphi'(1/n) \PP_x \{ \tau > 1/n\} = \varphi'(0) \PP_x \{ \tau > 0\} = \varphi'(0). \,
\end{equation}
In addition, $\varphi'$ is strictly increasing, and therefore $\varphi'' \geq 0$ and by the monotone convergence theorem,  $L v_{1,n} \to -v_2$ pointwise as $n\to \infty$ on $U$.  
Since $v_1, v_2$ are bounded on compact subsets of $U$ and $v_{1, n} \to v_1$ as $n \to \infty$, \cref{lem:red} implies $L v_1=- v_2$ on $U$ in the sense of distributions. 
\end{proof}

Let us note some more specific examples and results that follow as an immediate consequence of the previous results.

\begin{Corollary}
Suppose that \hyperlink{(U00)}{{\bf(U00)}}, \hyperlink{(NE)}{{\bf(NE)}}, and \hyperlink{(PH)}{{\bf(PH)}} are satisfied.  Then:  
\begin{itemize}
\item[(i)]  If for some $k\in \N$, $k\geq 2$, $v_1^k(x):= \E_x \tau^k$ is bounded on compact subsets of $U$, then $v_1^k \in C^\infty(U)$ and $L v_1^k =- k \E_x \tau^{k-1}$ on $U$ in the classical sense. 
\item[(ii)]  Suppose that for some $\delta >0$, $w_\delta(x):= \E_x e^{\delta \tau}$ is bounded on compact subsets of $U$.  Then $w_\delta \in C^\infty(U)$ and $Lw_\delta =-\delta w_\delta$.  
\end{itemize}  
\end{Corollary}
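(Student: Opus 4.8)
The plan is to read both statements off \cref{prop:generalphi} by choosing appropriate test functions $\varphi$, with a short induction to make the right-hand sides smooth and, in part (ii), a bootstrap of the subelliptic estimate. For part (i), fix $k\geq 2$. First I would note that Jensen's inequality applied to the concave map $t\mapsto t^{j/k}$ gives, on any compact $K\subset U$, $\sup_{x\in K}\E_x\tau^{j}\leq\big(\sup_{x\in K}\E_x\tau^{k}\big)^{j/k}<\infty$ for every $1\leq j\leq k$; in particular each $\E_x\tau^{j}$ is bounded on compact subsets of $U$ and finite at every point, so $\PP_x\{\tau<\infty\}=1$ for all $x\in U$. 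Then I argue by induction on $k$. For the base case $k=2$: since $\E_x\tau$ is finite on all of $U$, \cref{prop:expexit}(ii) (with $U_0=U$) yields $\E_x\tau\in C^\infty(U)$ with $L(\E_x\tau)=-1$; applying \cref{prop:generalphi} with $\varphi(t)=t^2$ — for which $\varphi$ and $\varphi'(t)=2t$ are strictly increasing to $\infty$, $\varphi'\in C^1$, and $v_1=\E_x\tau^2$, $v_2=2\E_x\tau$ are bounded on compacts — gives $Lv_1^{2}=-2\,\E_x\tau$ on $U$ in the sense of distributions. The right-hand side is smooth, so hypoellipticity of $L$ on $U$ (which follows from \hyperlink{(PH)}{\textbf{(PH)}}) upgrades this to $v_1^{2}\in C^\infty(U)$ with the equation holding classically. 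For the inductive step $k\geq 3$, assuming $\E_x\tau^{k-1}\in C^\infty(U)$, I apply \cref{prop:generalphi} with $\varphi(t)=t^k$ (again $\varphi,\varphi'$ strictly increasing to $\infty$, $\varphi'\in C^1$, and $v_1=\E_x\tau^k$, $v_2=k\,\E_x\tau^{k-1}$ bounded on compacts) to get $Lv_1^{k}=-k\,\E_x\tau^{k-1}$ distributionally; the right-hand side is smooth by the inductive hypothesis, and hypoellipticity of $L$ closes the induction.

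For part (ii), I would apply \cref{prop:generalphi} with $\varphi(t)=e^{\delta t}$. This $\varphi$ lies in $C^\infty([0,\infty);\RR)$, is strictly increasing with $\varphi(t)\to\infty$, and $\varphi'(t)=\delta e^{\delta t}$ is $C^1$, strictly increasing, and $\to\infty$; moreover $v_1(x)=\E_x e^{\delta\tau}=w_\delta(x)$ and $v_2(x)=\delta\,\E_x e^{\delta\tau}=\delta w_\delta(x)$ are bounded on compact subsets of $U$ by hypothesis. Hence \cref{prop:generalphi} gives $Lw_\delta=-\delta w_\delta$ on $U$ in the sense of distributions. To promote this to a classical identity I cannot merely invoke qualitative hypoellipticity, since the right-hand side $-\delta w_\delta$ is not a priori smooth; instead I would bootstrap the subelliptic estimate \eqref{eqn:Horest} of \cref{thm:Hor}, which applies because \hyperlink{(PH)}{\textbf{(PH)}} forces $L$ (written as in \eqref{eqn:vfL}) to satisfy the H\"ormander condition on $U$. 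Since $w_\delta$ is locally bounded, for any cutoffs $\psi_1,\psi_2\in C_0^\infty(U)$ with $\psi_2\equiv 1$ near $\mathrm{supp}(\psi_1)$ we have $\psi_2 w_\delta\in H^0(\RR^{\mathfrak{m}})$ and $\psi_2 Lw_\delta=-\delta\psi_2 w_\delta\in H^0(\RR^{\mathfrak{m}})$, so \eqref{eqn:Horest} with $s=0$ gives $w_\delta\in H^{\delta_0}_{\mathrm{loc}}(U)$ for the gain $\delta_0>0$ of \cref{thm:Hor}; iterating with $s=n\delta_0$ yields $w_\delta\in H^{(n+1)\delta_0}_{\mathrm{loc}}(U)$ for all $n$, hence $w_\delta\in C^\infty(U)$ by Sobolev embedding, and then $Lw_\delta=-\delta w_\delta$ holds in the classical sense.

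I expect the only genuinely delicate point to be this last bootstrap in part (ii): unlike part (i), where the equation's right-hand side is rendered smooth inductively so that the black-box hypoellipticity of $L$ suffices, here the right-hand side is the unknown itself, and one must pass through the quantitative gain-of-regularity estimate \eqref{eqn:Horest} rather than the qualitative notion of hypoellipticity. Everything else amounts to checking that the chosen $\varphi$'s meet the hypotheses of \cref{prop:generalphi} and propagating the compact-subset boundedness to lower moments via Jensen's inequality, both of which are routine.
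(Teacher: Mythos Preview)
Your argument for part (i) is essentially identical to the paper's, with the welcome addition of explicitly justifying via Jensen that the lower moments $\E_x\tau^{j}$ are bounded on compacts and that $\PP_x\{\tau<\infty\}=1$, which the paper asserts without comment.

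For part (ii) your bootstrap is correct, but the paper's route is shorter and avoids what you call the ``genuinely delicate point.'' Instead of viewing the equation as $Lw_\delta=-\delta w_\delta$ with an a priori non-smooth right-hand side, the paper rewrites it as $(L+\delta)w_\delta=0$. As noted in the discussion following \cref{def:parahor} (see the list~\eqref{parhyp:list}), the parabolic H\"ormander condition \hyperlink{(PH)}{\textbf{(PH)}} makes $L+\beta$ hypoelliptic on $U$ for every $\beta\in\RR$: adding a zeroth-order constant to $M=a+X_0+\tfrac12\sum X_\ell^2$ only shifts $a$ and does not affect the bracket condition. Since the right-hand side $0$ is trivially smooth, qualitative hypoellipticity of $L+\delta$ immediately gives $w_\delta\in C^\infty(U)$, and the equation holds classically. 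Your quantitative bootstrap via \eqref{eqn:Horest} achieves the same conclusion but is unnecessary here; the trick of absorbing the unknown into the operator is worth noting, as it recurs whenever the equation is an eigenvalue problem for a hypoelliptic operator.
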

\begin{proof}
For (i), set $\varphi(t)= t^k$ and note that $v_1^k(x)$ and $v_2^k(x): = k \E_x \tau^{k-1}$ are both bounded on compact subsets of $U$. By \cref{prop:generalphi},  $L v_1^k =- v_2^k$ on $U$ in the sense of distributions.  By induction and \cref{prop:expexit}, $v_2^k\in C^\infty(U)$ so that $v_1^k \in C^\infty(U)$ and $Lv_1^k=-v_2^k$ in the classical sense.    

For (ii), set $\varphi(t) =e^{\delta t}$.  Then, \cref{prop:generalphi} implies $(L+\delta)w_\delta =0$ in the sense of distributions.  However, $L+\delta$ is hypoelliptic on $U$, so that $w_\delta \in C^\infty(U)$ and $L w_\delta = - \delta w_\delta$ in the classical sense. 
\end{proof}

\section{Green's functions and Bony's Harnack inequality}
\label{sec:green}

In this section, we  explore \emph{Green's functions} in the hypoelliptic setting for an open set $U$.  As a consequence, we  generalize Bony's form of the Harnack inequality~\cite{Bony_69}.          

Let $\beta> 0$ be a constant and suppose $f\in C^\infty(U)\cap B(\overline{U})$.  Often (see~\cite{Bony_69}) one refers to the \emph{Green's operator} $G_\beta$ as the `mapping' $f\mapsto v$, where $v$
 is the `unique' solution of the Poisson problem
\begin{equation}
\label{eqn:poissonB}
\left\{
\begin{aligned}
(L-\beta) v &= -f \qquad && \text{ on } U \,,  \\
u &= 0 \qquad && \text{ on } \partial U.  
\end{aligned}
\right.
\end{equation} 
When $\beta=0$  and $L$ is hypoelliptic on $U$, the uniqueness of solutions of \eqref{eqn:poissonB} heavily depends on the structure of the diffusion $x_t$ driven by $L$ near $\partial U$.  This is also the case when~\eqref{eqn:poissonB} and $\beta > 0$. Here, we employ stochastic methods to  define and deduce properties of $G_\beta f$ without the uniqueness.

Formally, our `best guess' of solution of~\eqref{eqn:poissonB} would be 
\begin{align}
\label{eqn:Green}
v(x)= \E_x \int_0^\tau f(x_s) e^{-\beta s} \, ds   \,,
\end{align}
where $\tau= \inf \{ t>0 \,: \, x_t \notin U \}$.  The expression~\eqref{eqn:Green} is motivated by a formal application of 
 It\^{o}'s formula to the function $e^{-\beta t} v(x_t)$, where $v$ is a presumed classical solution of~\eqref{eqn:poissonB}. Even though it is not clear nor necessarily true that $v$ solves~\eqref{eqn:poissonB}, 
 $v$ in \eqref{eqn:Green} is well-defined 
 for any function $f\in B(\overline{U})$.  Note that $v$ in \eqref{eqn:Green} is well-defined even if $\tau = \infty$, 
 due to the presence of the exponentially decaying factor $e^{-\beta s}$.    

\begin{Remark}
Under further conditions on the stopping time $\tau$, one can define $G_0$ or even sometimes $G_{-\beta}$ for $\beta >0$ small enough.  Here, for simplicity, we keep  $\beta$ 
positive to avoid any further complexities of assumptions on $\tau$.  Additionally, under similar assumptions, one can replace $\beta$ by a spatially dependent function $\beta=\beta(x)\in C(\overline{U})$ to arrive at the Feynman-Kac formula
\begin{align}\label{eqn:fkf}
\E_x \int_0^\tau f(x_s) e^{-\int_0^s \beta(x_v) \, dv } \, ds,  
\end{align}      
provided \eqref{eqn:fkf} makes sense.  
\end{Remark}

\subsection{Definition and properties of $G_\beta$}
Given the preliminary remarks above, we now define $G_\beta$. 
\begin{Definition}
Let \hyperlink{(U00)}{{\bf(U00)}} and \hyperlink{(NE)}{{\bf(NE)}} be satisfied.  For any $\beta >0$ and any $f:\overline{U}\to [0, \infty)$ measurable, define $G_\beta f :U \to [0, +\infty]$ by
\begin{align}
\label{eqn:Gbetapos}
G_\beta f(x) := \E_x \int_0^\tau f(x_s) e^{-\beta s} \, ds. 
\end{align}
If $f:\overline{U}\to \RR$ is measurable and $G_\beta |f|(x)< \infty$ for all $x\in U$, we define 
\begin{align}
\label{eqn:Gbeta}
G_\beta f(x):= G_\beta f_+(x)- G_\beta f_-(x) = \E_x \int_0^\tau f(x_s) e^{-\beta s} \, ds
\end{align}
 where $f_+$, $f_-$ denote the positive and negative parts of $f$, respectively.  
 We call $G_\beta$ the \emph{Green's operator of order} $\beta >0$.  Let $$\mathcal{D}_{\beta}:= \{ f: \overline{U}\to \RR \text{ measurable}\,: \, x\mapsto G_\beta |f|(x)  \text{ is bounded on compacts in } U \}$$ be the \emph{domain} of $G_\beta$.       
\end{Definition}

\begin{Remark}
Observe that the expression \eqref{eqn:Gbeta} is better behaved compared with the formal stochastic representation corresponding to  
 the usual Poisson problem (i.e. with $\beta =0$) due to the presence of the exponentially decaying factor $e^{-\beta s}$.  \end{Remark}

Often it is convenient to  express the operator $G_\beta f$ as a traditional integral operator, meaning that for any 
\emph{reasonable} function $f$ on $\overline{U}$ and any $x\in U$:
\begin{align}
\label{eqn:greenfformal}
G_\beta f(x) = \int_{\overline{U}} f(y) g_\beta(x,y) \, dy
\end{align}
for some kernel $g_\beta$ to be referred to as the \emph{Green's function} associated to $G_\beta$.  However, we must be careful as there are subtleties in defining $g_\beta (x, y)$ for $y\in \partial U$ and for $x=y$ in $U$.

To obtain a workable expression for $g_\beta$, under the assumptions \hyperlink{(U00)}{{\bf(U00)}} and \hyperlink{(NE)}{{\bf(NE)}} we recall the stopped process $\tilde{x}_t=x_{t\wedge \tau_0}$ from 
\eqref{eqn:dfsp} distributed on $\overline{U}$,
 where $\tau_0=\inf\{ t\geq 0 \, : \, x_t \notin U\}$.  We also recall its transition kernel $\tilde{\mathcal{P}}_t(x, \, \cdot \,)$ defined on Borel subsets of $\overline{U}$. Note that, for any $x\in U$ and any $f\in \mathcal{D}_\beta $, the Dominated Convergence theorem implies 
\begin{align*}
\E_x \int_0^{\tau_0} f( \tilde{x}_s)\mathbf{1}_U(\tilde{x}_s) e^{-\beta s}\, ds&=\lim_{\delta \downarrow 0} \E_x \int_0^{(\tau_0- \delta)\vee 0} f(\tilde{x}_s ) \mathbf{1}_U(\tilde{x}_s) e^{-\beta s}\, ds \\
&=\lim_{\delta \downarrow 0} \E_x \int_0^{(\tau_0- \delta)\vee 0} f(\tilde{x}_s ) e^{-\beta s}\, ds\\
&
=\E_x \int_0^{\tau_0} f( \tilde{x}_s) e^{-\beta s}\, ds.  
\end{align*} 
Hence, for any $f\in \mathcal{D}_\beta$ and $x\in U$, we have by Fubini's theorem
\begin{equation}
\label{eqn:fubiningreen}
\begin{aligned}
G_\beta f(x) &= \E_x \int_0^{\tau_0} f(\tilde{x}_s) e^{-\beta s} \, ds = \E_x \int_0^\infty  \mathbf{1}_{U}( \tilde{x}_s) f(\tilde{x}_s) e^{-\beta s} \, ds \\
&= \int_0^\infty \int_{U} f(y) \tilde{\mathcal{P}}_s(x, dy) e^{-\beta s} \, ds 
= \int_{U} f(y) \int_0^\infty \tilde{\mathcal{P}}_s(x, dy) e^{-\beta s} \, ds \\
&=: \int_{U} f(y) g_{\beta, x}(dy) \,,
\end{aligned}
\end{equation}       
where for each $x\in U$, $g_{\beta, x}$ is a finite Borel measure on  $U$ given by
\begin{align}
\label{eqn:gbdef}
g_{\beta ,x }(A) = \int_0^\infty \tilde{\mathcal{P}}_s(x, A\cap U) e^{-\beta s} \, ds.
\end{align}  
In particular, the purported Green's `function' $g_{\beta,x}$, $x\in U$, does not charge the boundary $\partial U$. In order to show that $g_{\beta, x}$ has a 
density with respect to Lebesgue measure on $U$, we prove the following result:

\begin{Theorem}
\label{thm:smoothgreen}
Let $\beta >0$, and suppose that \hyperlink{(U00)}{{\bf(U00)}}, \hyperlink{(NE)}{{\bf(NE)}}, and \hyperlink{(PH)}{{\bf(PH)}} are satisfied.  Then:  
\begin{itemize}
\item[(i)] For every $x\in U$, the finite measure $g_{\beta, x}$ on Borel subsets of $U$ given by~\eqref{eqn:gbdef} is absolutely continuous with respect to Lebesgue measure on $U$ with density $g_\beta(x,y)$.  
\item[(ii)] The mapping $(x,y) \mapsto g_\beta(x,y)\in C^\infty(U\times U\setminus \text{\emph{Diag}})$, where $\text{\emph{Diag}}= \{ (x,x) \, : \, x\in U \}$.  Furthermore, for fixed $x\in U$, $(L_{y}^*-\beta) g_\beta(x,y)= -\delta_x$ in the sense of distributions on $U$ and, for fixed $y\in U$, $(L_x-\beta) g_\beta(x,y)= -\delta_y $ in the sense of distributions on $U$.  
\item[(iii)]  For any $f\in \mathcal{D}_\beta \cap C(U)$, $(L_x-\beta) G_\beta f= -f$ on $U$ in the sense of distributions.   
\item[(iv)]  If $u\in C^2(U;[0, \infty))$ satisfies $L u=0$ on $U$,  then $u \geq \beta G_\beta u  \text{ on } U.$
\end{itemize}
\end{Theorem}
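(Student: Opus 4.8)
The plan is to handle the four parts in sequence: (i) is immediate from \cref{thm:kol}(i), (ii) is the substance, and (iii)--(iv) are short consequences. For (i), \eqref{eqn:gbdef} reads $g_{\beta,x}(A)=\int_0^\infty e^{-\beta s}\tilde{\mathcal P}_s(x,A\cap U)\,ds$, and since \cref{thm:kol}(i) gives, for each $s>0$, a density $\tilde p_s(x,\cdot)$ for $\tilde{\mathcal P}_s(x,\cdot)$ on $\B_U$, Tonelli's theorem yields $g_{\beta,x}(A)=\int_A\big(\int_0^\infty e^{-\beta s}\tilde p_s(x,y)\,ds\big)\,dy$; as $g_{\beta,x}$ is finite the inner integral is finite for a.e.\ $y$, and one sets $g_\beta(x,y):=\int_0^\infty e^{-\beta s}\tilde p_s(x,y)\,ds$.

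For (ii) I would first prove the two one-variable distributional identities and only afterwards upgrade to joint regularity. Fix $x\in U$ and $\varphi\in C_0^\infty(U)$. Dynkin's formula~\eqref{eqn:Dynkin} applied to the stopped process gives $\tilde{\mathcal P}_t\varphi(x)=\varphi(x)+\int_0^t\tilde{\mathcal P}_s(L\varphi)(x)\,ds$, so $t\mapsto\tilde{\mathcal P}_t\varphi(x)$ is absolutely continuous and bounded. Pairing $g_\beta(x,\cdot)$ against $(L_y-\beta)\varphi$, interchanging the $s$- and $y$-integrals (permissible since $\int_U\tilde p_s(x,y)\,dy\le1$ and $\varphi$ has compact support), rewriting $\int_U\tilde p_s(x,y)L_y\varphi(y)\,dy=\partial_s\tilde{\mathcal P}_s\varphi(x)$ via the forward Kolmogorov equation~\eqref{eqn:forkol}, and integrating by parts in $s$ (the only boundary term, at $s=0$, contributes $-\varphi(x)$) gives $\int_U g_\beta(x,y)(L_y-\beta)\varphi(y)\,dy=-\varphi(x)$, i.e. $(L_y^*-\beta)g_\beta(x,\cdot)=-\delta_x$ in the sense of distributions on $U$. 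The same argument with the backward equation~\eqref{eqn:backkol} gives $(L_x-\beta)g_\beta(\cdot,y)=-\delta_y$ on $U$ for a.e.\ $y$; the relevant Fubini step is justified by $\int_{K'}\!\int_K g_\beta(x,y)\,dx\,dy\le\int_K G_\beta 1(x)\,dx\le|K|/\beta$, which shows $g_\beta\in L^1_{\mathrm{loc}}(U\times U)$, and the exceptional null set of $y$'s is removed afterwards using the off-diagonal smoothness below.

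To pass to joint smoothness, read the two identities as distributions on $U\times U$: they become $(L_y^*-\beta)g_\beta=-\delta_{\text{Diag}}$ and $(L_x-\beta)g_\beta=-\delta_{\text{Diag}}$ with $\langle\delta_{\text{Diag}},\Phi\rangle=\int_U\Phi(x,x)\,dx$, so by addition
\begin{align*}
(L_x+L_y^*-2\beta)\,g_\beta=-2\,\delta_{\text{Diag}}\qquad\text{on }U\times U.
\end{align*}
The operator $L_x+L_y^*-2\beta$ has the form~\eqref{eqn:formM} on $U\times U$, its vector fields being those of $L$ in the $x$-variables together with those of $L^*$ in the $y$-variables; since \hyperlink{(PH)}{{\bf(PH)}} makes the $x$-brackets span the $x$-tangent space and the $y$-brackets span the $y$-tangent space, while all cross-brackets vanish, H\"ormander's condition holds on $U\times U$ and \cref{thm:Hor} applies. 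Since $\delta_{\text{Diag}}$ is the zero distribution off the diagonal, hypoellipticity gives $g_\beta\in C^\infty(U\times U\setminus\text{Diag})$, which together with the displayed equations proves (ii). This joint-regularity step is what I expect to be the main obstacle: separate smoothness in $x$ and in $y$ does not by itself give joint smoothness, and the device of summing the two one-variable operators and invoking H\"ormander on the product is what pushes it through; one must also be careful with the $s\to0^+$ boundary term that generates the diagonal delta.

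Parts (iii) and (iv) are then short. For (iii), pairing $G_\beta f(x)=\int_U f(y)g_\beta(x,y)\,dy$ (from~\eqref{eqn:fubiningreen}) against $\varphi\in C_0^\infty(U)$ and applying $(L_x-\beta)g_\beta(\cdot,y)=-\delta_y$ under the integral---Fubini being legitimate because $f\in\mathcal D_\beta$ makes $G_\beta|f|$ locally bounded---gives $(L_x-\beta)G_\beta f=-f$ on $U$ in the sense of distributions. For (iv), apply Dynkin's formula~\eqref{eqn:Dynkin} (via \cref{rem:dynkin}) to $\phi(t,x)=e^{-\beta t}\tilde u(x)$, where $\tilde u\in C_0^2(\RR^\mathfrak{m})$ agrees with $u$ on $\overline{U_n}$, with the stopping time $t\wedge\tau_{U_n}$: since $Lu=0$ on $U$, for $s<\tau_{U_n}$ one has $(\partial_t+L)\phi(s,x_s)=-\beta e^{-\beta s}u(x_s)$, so
\begin{align*}
u(x)=\E_x\big[e^{-\beta(t\wedge\tau_{U_n})}u(x_{t\wedge\tau_{U_n}})\big]+\beta\,\E_x\!\int_0^{t\wedge\tau_{U_n}}e^{-\beta s}u(x_s)\,ds\ \ge\ \beta\,\E_x\!\int_0^{t\wedge\tau_{U_n}}e^{-\beta s}u(x_s)\,ds
\end{align*}
by nonnegativity of $u$; letting $t\to\infty$ and then $n\to\infty$ (so that $\tau_{U_n}\uparrow\tau$) and using the monotone convergence theorem yields $u\ge\beta G_\beta u$ on $U$.
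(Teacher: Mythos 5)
Your part (i) and (iv) proofs are correct and essentially what the paper does (for (i) you use Tonelli directly, the paper uses Radon--Nikodym plus a null-set argument; both fine). The interesting divergence is in (ii)--(iii). The paper proves (iii) \emph{first}: it sets up the approximations $G_\beta^n f(x)=\int_{1/n}^n\int_U f(y)\tilde p_s(x,y)\,dy\,e^{-\beta s}\,ds$, uses the backward Kolmogorov equation and the dominated convergence theorem to show $(L-\beta)_x G_\beta f=-f$ distributionally for $f\in C(U)\cap B(U)$, and only then deduces $(L_x-\beta)g_\beta(\cdot,y)=-\delta_y$ by duality (taking $f,h\in C_0^\infty(U)$). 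This order neatly avoids the delicate $s\to 0^+$ boundary term you flagged. In particular, you should note that for \emph{fixed} $y$, the limit $\lim_{s\to 0^+}\int_U\tilde p_s(x,y)\psi(x)\,dx=\psi(y)$ is not obviously a pointwise statement (and one also needs a uniform-in-$s$ bound on $\int_U\tilde p_s(x,y)\,dx$ to integrate by parts), so the ``same argument'' for the $x$-identity is really only clean once you pair against a test function in $y$ as well — which is what you implicitly need for the product-space identity anyway, and which the paper's $G_\beta^n$-approximation handles cleanly from the start. Your more substantial contribution is the joint-smoothness step: the paper's proof of (ii), as written, only concludes $y\mapsto g_\beta(x,y)\in C^\infty(U\setminus\{x\})$ for each fixed $x$ and $x\mapsto g_\beta(x,y)\in C^\infty(U\setminus\{y\})$ for each fixed $y$, and then declares the result ``as desired,'' leaving the upgrade from separate to joint smoothness implicit. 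Your device — reading the two identities as $(L_x+L_y^*-2\beta)g_\beta=-2\delta_{\mathrm{Diag}}$ on $U\times U$ and checking that this operator satisfies H\"ormander's condition on $U\times U$ (using that the second-order fields of $L$ and $L^*$ coincide, that \hyperlink{(PH)}{{\bf(PH)}} passes to $L^*$, and that cross-brackets between $x$- and $y$-lifted fields vanish) — genuinely closes that gap and actually yields what the theorem asserts. So: reorder your (ii)/(iii) so the distributional identity for $G_\beta f$ is established first (or present the $x$-identity directly in its paired-in-$y$ form), but keep your product-space H\"ormander argument for joint regularity; it proves more than what is written in the paper's proof.
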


\begin{proof}
First, we show (i). By \cref{thm:kol}, for $s>0$ and $x\in U$, the measure $\tilde{\mathcal{P}}_s(x, \, \cdot \, \cap U)$ is absolutely continuous with respect to Lebesgue measure on $U$ with density $\tilde{p}_t(x,y)$.  Furthermore, $(t,x,y) \mapsto \tilde{p}_t(x,y): (0,\infty) \times U \times U\mapsto \RR$ is $C^\infty$ and $\tilde{p}_t$ satisfies both the forward and backward Kolmogorov equations in the classical sense as in \cref{thm:kol}(i), (ii).  Thus, if $|A \cap U| = 0$, then $\tilde{\mathcal{P}}_s(x, A\cap U)=0$ for any $s >0$ and by~\eqref{eqn:gbdef} one has $g_{\beta, x}(A) = 0$.  
Hence,  by the Radon-Nikodym theorem, for each $x\in U$, $g_{\beta, x}$ has density $y \mapsto g_\beta(x,y)$ which belongs to $L^1(U)$.  This concludes (i).  

Next, we establish parts (ii) and (iii) simultaneously.  Note that for any $f\in C^\infty_0(U)$, we have for any $x\in U$, 
\begin{align}\label{eqn:feq}
G_\beta ((L-\beta) f)(x) = \int_U (L-\beta) f(y) g_{\beta}(x, y) dy.  
\end{align} 
On the other hand, because $f$ vanishes outside of a compact set in $U$, Dynkin's formula \eqref{eqn:Dynkin} applied to $f(x_{t\wedge \tau_0} )e^{-\beta (t\wedge \tau_0)}$ gives
\begin{align}\label{eqn:smrf}
\E_x f(x_{t\wedge \tau_0} )e^{-\beta (t\wedge \tau_0)} &=  f(x) + \E_x \int_0^{t\wedge \tau_0} (L - \beta) f(x_s)e^{-\beta s} ds.
\end{align}
Since $f$ is compactly supported in $U$ and $\beta >0$, after passing $t\to \infty$ we have 
$$
\E_x f(x_{t\wedge \tau_0} )e^{-\beta (t\wedge \tau_0)} \to 0.
$$  
By \eqref{eqn:Gbeta} and \eqref{eqn:smrf}, 
\begin{align}\label{eqn:seq}
G_\beta (L-\beta)f(x) = - f(x)
\end{align}
on $U$ for all $f\in C_0^\infty(U)$.  By combining \eqref{eqn:feq} and \eqref{eqn:seq} we obtain 
\begin{align}
(L^*_y - \beta) g_{\beta}(x, \cdot) = -\delta_x \,\,\, \text{ on } U  
\end{align}
 in the sense of distributions.  Hence, by hypoellipticity of $L^*-\beta$, for each $x\in U$, the mapping $y\mapsto g_\beta(x,y) \in C^\infty(U \setminus \{ x\})$.  

To obtain regularity in the $x$ variable, for any $f\in C(U) \cap B(U) \subset \mathcal{D}_\beta$, consider the sequence of approximations
\begin{align*}
G_\beta^n f(x) := \int_{1/n}^n \int_U f(y) \tilde{p}_s(x,y) \, dy \, e^{-\beta s} \, ds = :\int_U f(y) g_\beta^n(x,y) \, dy  
\end{align*} 
and let $h\in C^\infty_0(U)$.  Employing \cref{thm:kol} and the Dominated Convergence Theorem (in order to interchange the integral and derivatives) it follows that  
\begin{align*}
 \int_U h(x) (L-\beta)_x G_\beta^n f(x) \, dx& = \int_U \int_U h(x)  f(y) \int_{1/n}^n \partial_t(\tilde{p}_t(x,y) e^{-\beta t}) \, dt \, dy \, dx \\
&= \int_U  h(x) [e^{-\beta n} \E_x f(\tilde{x}_n) - e^{-\beta/n} \E_x f(\tilde{x}_{1/n})] \, dx.
  \end{align*}
Passing $n\to \infty$ and using  the Dominated Convergence Theorem, we find that $G_\beta$ satisfies 
\begin{equation}\label{eqn:rtbf}
(L-\beta)_x G_\beta f(x)= -f(x)
\end{equation}
in the sense of distributions on $U$.  More generally, if $f\in \mathcal{D}_\beta \cap C(U)$, then for every $N\in \N$ define $f_N:=f\wedge N\in C(U) \cap B(U)$.  Then $(L-\beta)_x G_\beta f_N = -f_N$ in the sense of distributions on $U$.  Passing $N\to \infty$, it follows by \cref{lem:red} and  definition of $\mathcal{D}_\beta$,  that $(L-\beta)_x G_\beta f =- f$ in the sense of distributions on $U$.  This proves (iii).

 To finish the rest of (ii), let $f, h\in C_0^\infty(U)$ and note that by \eqref{eqn:rtbf}
 \begin{equation}
 \int_{U\times U} [(L-\beta)_x^* h](x)  g_\beta(x, y) f(y) \, dy dx = - \int_U h(x) f(x) \, dx.  
 \end{equation}
 Since $f$ and $h$ are arbitrary, 
 \begin{align}
 \label{eqn:backward}
 (L-\beta)_x g_\beta(x,y) =-\delta_y(x)
 \end{align}
 on $U$ in the sense of distributions, and therefore the hypoellipticity of $L-\beta$ implies that  $x \mapsto g_{\beta}(x, y)$  is $C^{\infty} (U \setminus \{y\})$, as desired.

 Finally, for part (iv), for $n\geq 1$ let $U_n$ be a sequence of bounded open sets with $U_n\uparrow U$ and $\overline{U}_n \subset U$.  Set $\tau_n= \inf\{ t\geq 0 \, : \, x_t \notin U_n \}$.  Then,
  Dynkin's formula \eqref{eqn:Dynkin}, $Lu = 0$, and $u\geq 0$ imply for any $x\in U$, 
 \begin{align*}
 0 \leq \E_x e^{- \beta t \wedge \tau_0 \wedge \tau_n} u(x_{ t \wedge \tau_0 \wedge \tau_n}) =u(x) - \beta \E_x \int_0^{ t \wedge \tau_0 \wedge \tau_n} u(x_s) e^{-\beta s} \,  ds.  
 \end{align*}  
 Rearranging  and passing $n\to \infty$ and $t\to \infty$ gives the result by the Monotone Convergence Theorem.  
\end{proof}

We now apply \cref{thm:smoothgreen} to obtain a version of Harnack's inequality originally due to Bony~\cite{Bony_69}.  
Our assumptions are weaker as we do not assume the uniqueness of the solution of~\eqref{eqn:poissonB}. 
Before we proceed, let 
\begin{align}
\mathcal{H}_U= \{ u\in C^2(U; [0, \infty))  \,: \,   L u =0  \text{ on } U \}.  
\end{align}
Of course, if $L$ is hypoelliptic on $U$, then any distribution $u$ with $Lu=0$ on $U$ must belong to $C^\infty(U)$ and satisfy $Lu=0$ on $U$ in the classical sense.

\begin{Theorem}[Harnack inequality]
\label{thm:harnack}
Suppose that  \hyperlink{(U00)}{{\bf(U00)}}, \hyperlink{(NE)}{{\bf(NE)}}, and \hyperlink{(PH)}{{\bf(PH)}} are satisfied.  Consider any compact set $K\subset U$, any set $D\subset U$ which is dense in $U$ and any multiindex $\alpha$.  Then, there exist points $y_1, y_2, \ldots, y_k\in D$ and a constant $c>0$ such that the following inequality 
\begin{align}
\sup_{x\in K} |D^\alpha u(x)| \leq c[u(y_1)+u(y_2)+\cdots + u(y_k)]
\end{align}    
is satisfied for all $u \in \mathcal{H}_U$.    
\end{Theorem}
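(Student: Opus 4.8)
We propose to follow the structure of Bony's original argument \cite{Bony_69}, rephrased through the Green's operator $G_\beta$ and the probabilistic mean--value inequality already proved in \cref{thm:smoothgreen}(iv). Fix any $\beta>0$. First I would reduce the statement, by hypoellipticity, to an $L^1$--bound: since $L$ satisfies \hyperlink{(PH)}{{\bf(PH)}}, the fields of the parabolic list $\mathcal{V}$ of \cref{def:parahor} span the tangent space at every point of $U$, hence so do the fields in \eqref{eqn:Horlist}, and therefore \cref{thm:Hor} applies to $M=L$ on $U$ with some $\delta>0$. Given the compact set $K$, fix a bounded open $W$ with $K\subset W$ and $\overline W\subset U$, choose $N\in\N$ with $N\delta>|\alpha|+\mathfrak{m}/2$, and pick a nested family $\psi_0,\dots,\psi_N\in C_0^\infty(U)$ with $\psi_0\equiv1$ near $K$, $\psi_j\equiv1$ near $\mathrm{supp}\,\psi_{j-1}$, and $\mathrm{supp}\,\psi_N\subset W$. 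For $u\in\mathcal{H}_U$ one has $Lu=0$, and since $u$ is continuous $\psi_Nu\in L^2(\RR^{\mathfrak{m}})=H^0$; iterating \eqref{eqn:Horest} along the chain of cutoffs gives $\|\psi_0u\|_{H^{N\delta}}\le C\|\psi_Nu\|_{L^2}$, and then Sobolev embedding together with $\psi_0\equiv1$ near $K$ yields $\sup_K|D^\alpha u|\le C\|u\|_{L^2(W)}$. Because $u\ge0$, a standard iteration of this a priori estimate on a shrinking collar of $W$ (local boundedness, cf. \cite{GT_15}) upgrades it to
\begin{align}
\label{eqn:harnstep1}
\sup_{x\in K}|D^\alpha u(x)|\le C_1\|u\|_{L^1(W)}\qquad\text{for all }u\in\mathcal{H}_U,
\end{align}
with $C_1$ and $W$ depending only on $K$, $\alpha$, $L$ and $U$.

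Next I would bound $\|u\|_{L^1(W)}$ by finitely many point values. Since $\overline W$ is compact in $U$, cover it by balls $B_i=B(z_i,r_i)$, $i=1,\dots,k$, with $\overline{V_i}\subset U$ for $V_i:=B(z_i,2r_i)$. Each $V_i$ is bounded, open, $V_i^c\ne\emptyset$, $\overline{V_i}\subset\X$, and $L$ satisfies \hyperlink{(PH)}{{\bf(PH)}} on $V_i$; hence \hyperlink{(U00)}{{\bf(U00)}}, \hyperlink{(NE)}{{\bf(NE)}}, \hyperlink{(PH)}{{\bf(PH)}} hold with $V_i$ in place of $U$, so \cref{thm:smoothgreen} applies to $V_i$ and produces a Green's function $g^{V_i}_\beta$ of order $\beta$ with $g^{V_i}_\beta(x,\cdot)\in C^\infty(V_i\setminus\{x\})$ and $g^{V_i}_\beta\ge0$. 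Using density of $D$, choose $y_i\in D$ with $y_i\in B(z_i,\tfrac32 r_i)\setminus\overline{B_i}\subset V_i$, so that $\overline{B_i}$ is a compact subset of $V_i\setminus\{y_i\}$. Applying \cref{thm:smoothgreen}(iv) on $V_i$ to $u|_{V_i}\in C^2(V_i;[0,\infty))$ (for which $Lu=0$ on $V_i$), and writing $G^{V_i}_\beta$ as an integral against $g^{V_i}_\beta$,
\begin{align*}
u(y_i)\;\ge\;\beta\,G^{V_i}_\beta u(y_i)\;=\;\beta\int_{V_i}g^{V_i}_\beta(y_i,y)\,u(y)\,dy\;\ge\;\beta\Big(\inf_{\overline{B_i}}g^{V_i}_\beta(y_i,\cdot)\Big)\|u\|_{L^1(B_i)}.
\end{align*}
Granting that $\varepsilon_i:=\inf_{\overline{B_i}}g^{V_i}_\beta(y_i,\cdot)>0$ (see below), we get $\|u\|_{L^1(B_i)}\le(\beta\varepsilon_i)^{-1}u(y_i)$, and summing over $i$ while using $u\ge0$,
\begin{align*}
\|u\|_{L^1(W)}\le\sum_{i=1}^k\|u\|_{L^1(B_i)}\le\big(\max_i(\beta\varepsilon_i)^{-1}\big)\sum_{i=1}^k u(y_i).
\end{align*}
Combined with \eqref{eqn:harnstep1} this is the assertion, with $c=C_1\max_i(\beta\varepsilon_i)^{-1}$ and the points $y_1,\dots,y_k\in D$.

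The one genuinely external input --- and what I expect to be the main obstacle --- is the strict positivity $g^{V_i}_\beta(y_i,y)>0$ for all $y\in V_i\setminus\{y_i\}$; since $g^{V_i}_\beta(y_i,\cdot)$ is continuous on the compact set $\overline{B_i}\subset V_i\setminus\{y_i\}$, this makes $\varepsilon_i$ a positive minimum. By \eqref{eqn:gbdef} (applied on $V_i$) and \cref{thm:kol}, positivity of $g^{V_i}_\beta(y_i,y)$ is equivalent to positivity of the smooth stopped transition density $\tilde p^{V_i}_s(y_i,y)$ for some $s>0$, i.e.\ to $y$ being accessible from $y_i$ by trajectories remaining in $V_i$. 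Here the parabolic H\"ormander condition enters in full force (beyond mere hypoellipticity of $L$): it makes the control system associated to \eqref{eqn:vfL} bracket--generating on $V_i$, so Chow's theorem gives accessibility of every point of the connected set $V_i$ from $y_i$ within $V_i$, and the Stroock--Varadhan support theorem converts accessibility into strict positivity of the density. Alternatively one may quote the Gaussian--type lower bounds for hypoelliptic kernels, or Bony's strong minimum principle \cite{Bony_69}. With this positivity in hand the remaining work is purely bookkeeping with the a priori estimate \eqref{eqn:harnstep1}, the covering, and the mean--value inequality.
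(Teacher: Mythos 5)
Your first step (bootstrap the H\"ormander estimate to get $\sup_K |D^\alpha u|\le C_1\|u\|_{L^1(W)}$ via $L^1\hookrightarrow H^t$ for $t$ sufficiently negative) matches the paper's use of \eqref{eqn:Horest} and is fine, though the paper does it in one pass starting from a negative Sobolev index rather than iterating from $L^2$. The gap is in the second step. You cover $\overline W$ by balls and need $\varepsilon_i:=\inf_{\overline{B_i}}g^{V_i}_\beta(y_i,\cdot)>0$, i.e.\ strict positivity of the local Green's function on an entire compact set. You justify this by asserting that \hyperlink{(PH)}{{\bf(PH)}} makes the control system of \eqref{eqn:vfL} bracket-generating so that Chow's theorem gives accessibility of all of $V_i$ from $y_i$. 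That is incorrect: Chow's theorem applies to symmetric (driftless) systems, whereas the control system here has a one-sided drift $X_0$, and the parabolic H\"ormander condition generates through brackets \emph{with} $X_0$, which says nothing about being able to go backwards along it. The paper's own \cref{ex:non}, $L=-(x^2)^2\partial_{x^1}+\partial_{x^2}^2$, satisfies \hyperlink{(PH)}{{\bf(PH)}} yet has $x^1_t$ monotonically decreasing, so $\tilde p_s(y_i,y)=0$ and hence $g_\beta(y_i,y)=0$ whenever $y^1>y_i^1$. None of the alternatives you float (Gaussian lower bounds, support theorem giving full support, strong minimum principle) rescues this under \hyperlink{(PH)}{{\bf(PH)}} alone; they all require something closer to a strong H\"ormander or controllability hypothesis.

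The paper avoids the issue entirely and in a way worth internalizing. For a fixed $x_0\in K$, instead of demanding positivity of $g_\beta(\cdot,x_0)$ on a whole ball, it only needs that $g_\beta(\cdot,x_0)$ is not identically zero on $U\setminus\{x_0\}$, which is immediate because $(L-\beta)_y\,g_\beta(y,x_0)=-\delta_{x_0}$ in the distributional sense (\eqref{eqn:backward}). Combined with density of $D$ and continuity of $g_\beta$ off the diagonal, this produces \emph{one} point $y\in D$ and a neighborhood product $W\times X$ of $(y,x_0)$ on which $g_\beta\ge c>0$. That gives $u(y)\ge\beta c\|u\|_{L^1(X)}$, and then the H\"ormander estimate bounds $\sup_V|D^\alpha u|$ by $\|u\|_{L^1(X)}$; compactness of $K$ finishes. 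The chain of inequalities is reversed relative to yours (point value $\Rightarrow L^1$ $\Rightarrow$ sup, rather than sup $\Rightarrow L^1$ $\Rightarrow$ point values), and the only positivity the argument extracts from the probability is positivity at a single point, which is all the equation $(L-\beta)g_\beta=-\delta$ delivers for free.
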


\begin{proof}From this point in the paper, the argument is a slight modification of the proof of~\cite[Lemma~7.1]{Bony_69}.  Fix any multi-index $\alpha$.  Let $K\subset U$ be compact and $x_0 \in K$.  
We show that there exist an open neighborhood $V \subset U$ of $x_0$, an element $y\in D$, and a constant $c>0$ such that  
\begin{align}
\label{eqn:firstharnack}
u(y) \geq c \sup_{x\in V} |D^\alpha u(x)|  \,\, \text{ for all } u \in \mathcal{H}_U.  
\end{align}     
The result then follows from \eqref{eqn:firstharnack} using compactness of $K$.  
By \cref{thm:smoothgreen}, fixing $\beta >0$,  we obtain for all $y\in U$ and all $u\in \mathcal{H}_U$
\begin{align*}
u(y) \geq \beta \int_{U} u(z) g_\beta(y, z) \, dz.  
\end{align*} 
We next claim that there exists $y\in D\setminus \{ x_0\}$ such that $g_\beta(y, x_0) >0$.  If not, by \cref{thm:smoothgreen}, $ g_\beta(\cdot, x_0) = 0$ 
on $U \setminus \{ x_0 \}$, a contradiction to $(L-\beta)_y g_\beta(y,x_0) = -\delta_{x_0}(x)$ on $U$ in the sense of distributions (see \eqref{eqn:backward}). 
Thus fix $y\in D\setminus \{ x_0 \}$ so that $g_\beta(y, x_0) >0$ and by continuity, choose disjoint neighborhoods $W$ of $y$ and $X$ of $x_0$ and a constant $c>0$ such that 
\begin{align*}
g_\beta (w, x) \geq c  \,\,\text{ for all } (w, x) \in W \times X.  
\end{align*}
Nonnegativity of $g_\beta$ then  implies for fixed $y\in D$ 
 \begin{align*}
 u(y) \geq \beta c \int_X u(x) \, dx  
 \end{align*}
 for all $u\in \mathcal{H}_U$.  
In order to bound the integral $c\int_X u(x) \, dx= c\| u\|_{L^1(X)}$ from below, we bootstrap \eqref{eqn:Horest} to obtain 
for any $s>0$ and $t <0$ and any open neighborhood $V$ of $x_0$ with $\overline{V}\subseteq X$, the existence of a constant $C_{s,t}$ depending only on $s,t$ and $V, X$ 
such that 
\begin{align*}
\|u\|_{H^s(V)} \leq C_{s,t} \|u \|_{H^t(X)} \,\, \text{ for all } u\in \mathcal{H}_U.    
\end{align*}      
For sufficiently negative $t$ we have $L^1 \hookrightarrow H^t$, and 
  there exists $C>0$ independent of $u$ such that 
\begin{align*}
\sup_{x\in V} |D^\alpha u(x)| \leq C \int_X u(x) \, dx \leq \frac{C}{\beta c} u(y).  
\end{align*}
\end{proof}

Given the previous result, we return to the proof of \cref{prop:expexit}(ii). 

\begin{proof}[Proof of \cref{prop:expexit}(ii)]
 Take a sequence of bounded open sets $U_n$, $n\geq 1$, with $\overline{U_n} \subset U$ and  $U_n \uparrow U$ as $n\to \infty$.  By \cref{prop:expexit}(i), for any $n \geq 1$,
  if $\tau_n:= \tau_{U_n}$, then $w_n(x):= \E_x \tau_n$ is bounded on $\overline{U_n}$ and $C^\infty(U_n)$ with $L w_n=-1$ on $U_n$ in the classical sense.  
 In addition, since $U_n \subset U$, we have $w_n \leq v$.  Let $x\in U_0$ and fix $\delta >0$ such that $\overline{B_{\delta}(x)} \subset U_0$.   
 Fix $m\in \N$ such that $U_m \supset \overline{B_\delta(x)}$.  Then, for each $n \geq m$, the function $z_n = w_n - w_m$ satisfies $Lz_n = 0$ on $B_\delta(x)$ and for any $y \in D$, $0 \leq z_n(y) \leq v(y) < \infty$. 
  By \cref{thm:harnack}, $z_n$ has bounded derivatives of all orders independently of $n$ on $\overline{B_\delta(x)}$. Since $m$ is fixed, $w_n = z_n + w_m$ has 
 has bounded derivatives of all orders independently of $n$ on $\overline{B_\delta(x)}$. Then, $w_n$ converges to a function $\overline{v}$, uniformly on compact subsets of $U_0$.  

The assertion (ii) follows from~\cref{lem:red} once we show that $v = \overline{v}$ on $U_0$. Since $U_n \uparrow U$,
then $\tau_n \uparrow \tau$ and $v = \overline{v}$ follows from the monotone convergence theorem. 
This finishes the proof of (ii).
\end{proof}

\section{General results on the formal stochastic solution}
\label{sec:poisson2}

\subsection{Preliminary remarks}
Throughout this section, we assume that $U\subset \RR^\mathfrak{m}$ is non-empty, open set with non-empty complement $U^c$.  
We return to one of our main goals:  satisfaction of~\eqref{eqn:pp} by the formal stochastic representation, which for the  Poisson equation 
is given by (provided it makes sense) 
\begin{align}
\label{eqn:vstochrev}
v_{\s, 0}(x)= \E_x \int_0^{\tau} f(x_s ) \, ds= \int_0^{\tau_0}f(\tilde{x}_s) \, ds, \,\, x\in U, 
\end{align}
where $\tilde{x}_t = x_{t\wedge \tau_0}$ is the process stopped at the boundary $\partial U$.   
For example if $f \equiv 1$, $v(x):=\E_x \tau=\E_x \tau_0 = \E_x \int_0^{\tau_0} 1 \, ds$, $x\in U$ and as long as the hypotheses of \cref{prop:expexit}(ii) are satisfied, 
$v\in C^\infty(U)$ and $Lv=-1$ on $U$ in the classical sense.    Thus, we expect that under analogous assumptions, $v_{\s, 0} \in C^\infty(U)$ and $Lv_{\s, 0}=-f$ in the classical sense on $U$.  

Additionally, if $g\in C^\infty_0(\X)$ for the formal stochastic solution of the Dirichlet problem
\begin{align}
\label{eqn:ustochrev}
u_{\s, 0}(x) = \E_x g(x_\tau),
\end{align}     
Dynkin's formula applied to $g$ yields for any $x\in U$, $t \geq 0$
\begin{align}
\label{eqn:dirichletpoisson}
u_{\s, 0}(x,t) := \E_x g(x_{\tau \wedge t})&= g(x) + \E_x \int_0^{\tau\wedge t} L g(x_s) \, ds.  \end{align}
Thus by formally passing $t\to \infty$, rearranging and using ~\eqref{eqn:vstochrev}, one has that $u_{\s, 0} - g$ satisfies $L(u - g) = Lg$, and interior smoothness and  
$Lu_{\s,0}=0$ on $U$ in the classical sense follows. 
Then, by an approximation argument and \cref{lem:red}, the same properties follow 
for more general $g$.

\subsection{Interior smoothness for $v_{\s, 0}$ and $u_{\s,0}$}
We begin with the interior smoothness for $v_{\s, 0}$.  Suppose  \hyperlink{(U00)}{{\bf(U00)}} and \hyperlink{(NE)}{{\bf(NE)}} hold and  
for any measurable $f:\overline{U}\to [0, \infty)$, define $G_0 f : U\to [0, \infty]$  by (cf.  \eqref{eqn:Gbetapos})
\begin{align}
G_0 f(x) = \E_x \int_0^{\tau} f(x_s) \, ds \,,  
\end{align}
where $\tau = \tau_0$ is as in \eqref{eqn:dotz}. 
For any $f:\overline{U}\to \RR$ measurable with $G_0 |f|(x)< \infty$ for all $x\in U$, we let  (cf.  \eqref{eqn:Gbeta})
\begin{align}
G_0 f(x):= G_0 f_+(x) - G_0 f_- (x) = \E_x \int_0^\tau f(x_s) \, ds
\end{align}  
and 
\begin{align}
\mathcal{D}_0 (U)=\{ f: \overline{U} \to \RR \text{ measurable} \, : \, x\mapsto G_0|f|(x)  \text{ is bounded on compact sets in } U \}.  
\end{align}

\begin{Theorem}[Interior Smoothness for $v_{\s,0}$]
\label{thm:poissonrev}
Let $f\in C(U)\cap \mathcal{D}_0(U)$ and assume that  \hyperlink{(U00)}{{\bf(U00)}}, \hyperlink{(NE)}{{\bf(NE)}}, and \hyperlink{(PH)}{{\bf(PH)}} are satisfied.  Suppose, moreover 
$\PP_x\{ \tau < \infty\}=1$ for all $x\in U$.  Then    
\begin{align}
L v_{\s,0}=-f \,\, \text{ on } \,\, U 
\end{align}
in the sense of distributions.  Also,  if $f\in C^\infty(U)\cap \mathcal{D}_0(U)$, then $v_{\s,0} \in C^\infty(U)$ and $Lv_{\s, 0}=-f$ on $U$ in the classical sense.        
\end{Theorem}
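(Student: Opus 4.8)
The plan is to realize $v_{\s,0}=G_0f$ as the $\beta\downarrow 0$ limit of the Green operators $G_\beta f$ of positive order, for which \cref{thm:smoothgreen}(iii) already supplies the required distributional equation, and then to pass to the limit with the aid of \cref{lem:red}. The reason for detouring through $\beta>0$ is that \cref{thm:smoothgreen} was established only for $\beta>0$ --- the exponential weight is what lets one discard the boundary term there --- so the case $\beta=0$ must be recovered by a limiting argument, and it is exactly here that the hypotheses $f\in\mathcal{D}_0(U)$ and $\PP_x\{\tau<\infty\}=1$ are used.

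First I would observe that $f\in\mathcal{D}_\beta\cap C(U)$ for every $\beta>0$, since $e^{-\beta s}\le 1$ forces $G_\beta|f|(x)\le G_0|f|(x)$, which is bounded on compact subsets of $U$ because $f\in\mathcal{D}_0(U)$. Hence \cref{thm:smoothgreen}(iii) applies and gives, for every $\beta>0$,
\begin{align*}
L(G_\beta f)=-f+\beta\,G_\beta f\qquad\text{on }U\text{ in the sense of distributions}.
\end{align*}
Now fix $\beta_n\downarrow 0$ and put $v_n:=G_{\beta_n}f$ and $f_n:=f-\beta_n G_{\beta_n}f$. For each $x\in U$ the random variable $\int_0^\tau|f(x_s)|\,ds$ has finite $\PP_x$-mean $G_0|f|(x)$, hence is $\PP_x$-a.s.\ finite, and combined with $\PP_x\{\tau<\infty\}=1$ two applications of dominated convergence (inside the time integral, then under $\E_x$) yield $v_n(x)=\E_x\int_0^\tau f(x_s)e^{-\beta_n s}\,ds\to\E_x\int_0^\tau f(x_s)\,ds=v_{\s,0}(x)$ for every $x\in U$; similarly $|\beta_n G_{\beta_n}f(x)|\le\beta_n G_0|f|(x)\to 0$, so $f_n\to f$ pointwise on $U$. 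Moreover $|v_n|\le G_0|f|$ and $|f_n|\le|f|+G_0|f|$, both of which are bounded on compact subsets of $U$.

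With these facts in hand I would apply \cref{lem:red} on an arbitrary bounded open set $W$ with $\overline{W}\subset U$, taking $\LL_n=\LL=L$ (so $(\LL_n-\LL)^*\phi\equiv 0$) and $V=W$: the $v_n$ are uniformly bounded on $W$; the pointwise limits together with the local domination above and the dominated convergence theorem give $v_n\to v_{\s,0}$ and $f_n\to f$ in $L^1(W)$, hence in the sense of distributions, so $\LL_n v_n=-f_n\to -f$ in the sense of distributions on $W$. \cref{lem:red} then yields $Lv_{\s,0}=-f$ on $W$ in the sense of distributions, and since $W$ was arbitrary the same holds throughout $U$. Finally, if in addition $f\in C^\infty(U)$, then because \hyperlink{(PH)}{{\bf(PH)}} forces $L$ to be hypoelliptic on $U$, the regularity part of \cref{lem:red} upgrades this to $v_{\s,0}\in C^\infty(U)$ with $Lv_{\s,0}=-f$ on $U$ in the classical sense. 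The only delicate point is the twofold interchange of limit and expectation in $v_n$ --- over the inner time integral and over $\E_x$ --- which is precisely what the hypotheses $f\in\mathcal{D}_0(U)$ and $\PP_x\{\tau<\infty\}=1$ make legitimate; the substantive analytic work has already been done in \cref{thm:smoothgreen} and \cref{lem:red}.
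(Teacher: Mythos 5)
Your proof is correct, and it takes a genuinely different route than the paper's. The paper works directly with the transition density $\tilde{p}_t(x,y)$ of the stopped process furnished by \cref{thm:kol}: it introduces the doubly-truncated approximants $v_{n,k}(x)=\int_{1/k}^{k}\int_{U_n}\tilde{p}_t(x,y)f(y)\,dy\,dt$, uses Fubini and the backward Kolmogorov equation to evaluate $Lv_{n,k}$, passes $k\to\infty$ (using $\PP_x\{\tau<\infty\}=1$ to kill the boundary term $\E_x\mathbf{1}_{U_n}(\tilde{x}_k)f(\tilde{x}_k)$), and then passes $n\to\infty$, invoking \cref{lem:red} at each stage. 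Your argument instead leverages the already-proved $\beta>0$ Green's-operator theory: the containment $\mathcal{D}_0(U)\subset\mathcal{D}_\beta$ lets you invoke \cref{thm:smoothgreen}(iii) to get $Lv_n=-f_n$ with $v_n=G_{\beta_n}f$ and $f_n=f-\beta_nG_{\beta_n}f$, and then a single local application of \cref{lem:red} together with dominated convergence (justified by $G_0|f|$ being locally bounded and $\PP_x\{\tau<\infty\}=1$) sends $\beta_n\downarrow 0$. Both routes ultimately rest on the same foundations (\cref{thm:kol} and \cref{lem:red}), but yours outsources the hard approximation work to \cref{thm:smoothgreen} and replaces the two-parameter cutoff scheme by a one-parameter regularization in $\beta$, which is arguably cleaner; the paper's route is more self-contained within the theorem's own proof and does not presuppose the $\beta>0$ Green's-function machinery, so either could in principle be proved first. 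One small point worth making explicit: since $\beta_1$ may exceed $1$, the uniform domination is $|f_n|\le|f|+\beta_1G_0|f|$ rather than $|f|+G_0|f|$, but this of course remains locally bounded and the argument is unaffected.
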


\begin{proof}
Suppose $f\in C(U)\cap \mathcal{D}_0(U)$.  Let $U_n$ be bounded, open set with $\overline{U}_n \subset U$ and $U_n\uparrow U$ as $n\to \infty$.  For $n,k \in \N$ define
\begin{align*}
v_{n,k}(x):= \int_{1/k}^k \int_{U_n} \tilde{p}_t(x,y) f(y) \, dy \, dt.  
\end{align*} 
By \cref{thm:kol}, $v_{n,k} \in C^\infty(U)$.  Fubini's Theorem then gives
\begin{align*}
L v_{n,k}(x) =  \int_{\frac{1}{k}}^{k} \int_{U_n} L_x \tilde{p}_t(x,y) f(y) \, dy \, dt &=  \int_{\frac{1}{k}}^{k}  \int_{U_n}  \partial_t \tilde{p}_t(x,y) f(y) \, dy \, dt \\
&=\E_x  \mathbf{1}_{U_n} (\tilde{x}_k) f(\tilde{x}_k) - \E_x \mathbf{1}_{U_n} (\tilde{x}_{1/k}) f(\tilde{x}_{1/k}) \\
&\to -\mathbf{1}_{U_n}(x)f(x) 
\end{align*}
as $k\to \infty$ in the sense of distributions on $U$, where we used that $f$ is bounded, continuous on $U_n$ and  for all $x\in U$,  $\E_x  \mathbf{1}_{U_n} (\tilde{x}_k) \leq 1 - \PP \{\tilde{x}_k \in \partial U\} \to 0$ as $k \to \infty$ by 
$\PP_x\{ \tau < \infty \}=1$.  

Since $f$ is bounded on $U_n$, by the Dominated Convergence theorem $v_{n,k} \to v_n$ as $k \to \infty$, where
$$
v_{n}(x):= \int_0^\infty  \int_{U_n}  \tilde{p}_t(x,y) f(y) \, dy \, dt = \E_x \int_0^\tau \mathbf{1}_{U_n}(\tilde{x}_s) f(\tilde{x}_s) \, ds.
$$  
Then, by \cref{lem:red}, $Lv_n=-f$ on $U_n$ in the sense of distributions.  Splitting $f$ into positive and negative parts (using $f \in \mathcal{D}_0(U)$) 
and using the Monotone Convergence Theorem, we obtain that $v_n$, which is locally uniformly bounded on $U$,
converges pointwise to $v_{\s,0}$ as $n \to \infty$. 
By employing \cref{lem:red} again, we obtain the desired result.   
\end{proof}

Given the previous result, we next investigate the interior smoothness of $u_{\s, 0}$.  

\begin{Theorem}[Interior smoothness for $u_{\s,0}$]
\label{thm:dirichletrev}
Assume  \hyperlink{(U00)}{{\bf(U00)}}, \hyperlink{(NE)}{{\bf(NE)}}, and \hyperlink{(PH)}{{\bf(PH)}} are satisfied and 
$\PP_x\{ \tau < \infty\}=1$ for all $x\in U$.  Suppose $g\in C(\X)$ and $u_{\s, 0}(x)=\E_x g(x_\tau)$ is bounded on compact subsets of $U$.  Then, $u_{\s, 0} \in C^\infty(U)$ and $L u_{\s,0}=0$ on $U$ in the classical sense. 
\end{Theorem}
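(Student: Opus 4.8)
The plan is to represent $u_{\s,0}$, on each member of an exhaustion of $U$, as $g$ plus a Green's potential, and then to invoke the interior smoothness result~\cref{thm:poissonrev} together with the approximation lemma~\cref{lem:red}. One cannot run this argument globally on $U$: rewriting $u_{\s,0}-g=v_{\s,0}$ with $f=-Lg$ and applying~\cref{thm:poissonrev} on $U$ would require $Lg\in\mathcal{D}_0(U)$, which may fail because $\E_x\tau$ can be infinite even though $\PP_x\{\tau<\infty\}=1$. So I would localize: fix bounded open sets $U_n$, $n\geq1$, with $\overline{U_n}\subset U_{n+1}$ and $\bigcup_n U_n=U$, set $\tau_n:=\tau_{U_n}$, and first treat $g\in C_0^\infty(\X)$; the general case then follows by a truncation-and-mollification argument and one more application of~\cref{lem:red}.

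For $g\in C_0^\infty(\X)$ and $x\in U_n$, the localized Dynkin formula of~\cref{rem:dynkin} applied to $g$ (time-independent, hence $(\partial_t+L)g=Lg$) with the bounded stopping time $\tau_n\wedge t$ — legitimate since $\tau_n\leq\tau_{\X_k}$ once $\overline{U_n}\subset\X_k$ — gives
\begin{align*}
\E_x g(x_{\tau_n\wedge t})=g(x)+\E_x\int_0^{\tau_n\wedge t}Lg(x_s)\,ds .
\end{align*}
Because $\tau_n\leq\tau<\infty$ almost surely and, by~\cref{prop:expexit}(i), $\E_x\tau_n$ is bounded on $\overline{U_n}$, letting $t\to\infty$ (bounded convergence on the left, dominated convergence on the right with dominating function $(\sup_{\overline{U_n}}|Lg|)\,\tau_n\in L^1(\PP_x)$) yields $u_n(x):=\E_x g(x_{\tau_n})=g(x)+v_n(x)$ for $x\in U_n$, where $v_n(x):=\E_x\int_0^{\tau_n}Lg(x_s)\,ds$. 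I would then apply~\cref{thm:poissonrev} with $U$ replaced by $U_n$ and $f=Lg$: the assumptions \hyperlink{(U00)}{{\bf(U00)}}, \hyperlink{(NE)}{{\bf(NE)}}, \hyperlink{(PH)}{{\bf(PH)}} are inherited by $U_n$, one has $\PP_x\{\tau_n<\infty\}=1$ on $U_n$, and $Lg\in C^\infty(U_n)\cap\mathcal{D}_0(U_n)$ since $|Lg|\leq\sup_{\overline{U_n}}|Lg|<\infty$ while $\E_x\tau_n$ is bounded on $\overline{U_n}$. Hence $v_n\in C^\infty(U_n)$ with $Lv_n=-Lg$ on $U_n$ in the classical sense, so $u_n=g+v_n\in C^\infty(U_n)$ and $Lu_n=0$ on $U_n$.

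It then remains to pass $n\to\infty$. Using path continuity and that $\{x_s:0\leq s\leq t\}$ is a compact subset of $U$ whenever $t<\tau$, one gets $\tau_n\uparrow\tau$ almost surely; since $\tau<\infty$ almost surely, continuity gives $x_{\tau_n}\to x_\tau$ almost surely, so $u_n(x)=\E_x g(x_{\tau_n})\to\E_x g(x_\tau)=u_{\s,0}(x)$ for every $x\in U$ by bounded convergence, with $|u_n|\leq\|g\|_\infty$ uniformly. I would then apply~\cref{lem:red} on $V=U$ with $\LL_n=\LL=L$ (so the adjoint hypothesis is vacuous) and $f\equiv0$: for $\varphi\in C_0^\infty(U)$ one has $\mathrm{supp}\,\varphi\subset U_n$ for all large $n$, whence $\langle Lu_n,\varphi\rangle=\int_{U_n}(Lu_n)\varphi\,dx=0$, so $Lu_n\to0$ and $u_n\to u_{\s,0}$ in the sense of distributions; \cref{lem:red} gives $Lu_{\s,0}=0$ on $U$ in the sense of distributions, and since \hyperlink{(PH)}{{\bf(PH)}} implies \hyperlink{(L1)}{{\bf(L1)}}, hypoellipticity of $L$ on $U$ upgrades this to $u_{\s,0}\in C^\infty(U)$ with $Lu_{\s,0}=0$ classically. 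For general $g\in C(\X)$ with $u_{\s,0}$ bounded on compact subsets of $U$, I would approximate $g$ by $g_k\in C_0^\infty(\X)$ — truncating to $(-k)\vee g\wedge k$, which converges to $g$ in a dominated fashion under the exit law $\PP_x(x_\tau\in\cdot\,)$ by the local integrability of $g\circ x_\tau$, then mollifying near each relatively compact $\overline{U_n}$ — and apply~\cref{lem:red} once more (working on relatively compact subsets of $U$ to secure the uniform bound).

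The main obstacle is the bookkeeping around the exhaustion and the two places where $\PP_x\{\tau<\infty\}=1$ is indispensable: it is needed to invoke~\cref{prop:expexit}(i), which makes $\E_x\tau_n$ finite and hence $Lg\in\mathcal{D}_0(U_n)$, and it is needed again to ensure $\tau_n\uparrow\tau<\infty$, so that $x_{\tau_n}\to x_\tau$ and therefore $u_n\to u_{\s,0}$. Checking that the localized potentials $v_n$ converge to $u_{\s,0}-g$ in a sense strong enough to feed~\cref{lem:red} (distributional convergence together with the uniform bound) is the crux; once that is in place, hypoellipticity does the rest.
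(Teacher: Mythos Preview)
Your proposal is correct and follows essentially the same route as the paper: localize to bounded $U_n$, use Dynkin's formula plus~\cref{thm:poissonrev} to show $Lu_n=0$ there, and pass to the limit via~\cref{lem:red}. The only organizational difference is the order in which the approximations are nested: the paper first fixes a bounded $U_0$ and handles $g\in C^2$, then upgrades to continuous $g$ on $U_0$ by mollification, and only afterward runs the exhaustion $U_n\uparrow U$ together with the truncation $g\wedge N$ and the decomposition $g=g_+-g_-$; you instead run the exhaustion first for $g\in C_0^\infty(\X)$ and postpone the truncation/mollification to the end. Your final approximation step is a bit loosely worded (truncation plus mollification alone does not yield compact support, and the phrase ``mollifying near each relatively compact $\overline{U_n}$'' is vague), but the paper's cleaner bookkeeping---treat nonnegative $g$, truncate to $g\wedge N$, pass $n\to\infty$ by dominated convergence and then $N\to\infty$ by monotone convergence---fills this in with no new ideas.
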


\begin{Remark}
Note that by the Tietze Extension Theorem we can replace the assumption $g\in C(\X)$ with $g$ being continuous only on  a neighborhood of $\partial U$.  
\end{Remark}

\begin{proof}
First let $U_0\subset U$ be bounded, open with $\overline{U_0} \subset U$.  By \cref{prop:expexit}(i), $v_0(x):= \E_x \tau_{U_0}$ is bounded on $U_0$.  Thus, for any
open $V_0$ with $V_0 \supset \overline{U}_0$ and any $g\in C^2(V_0)$, the boundedness of $U_0$ implies $Lg \in C(U_0) \cap \mathcal{D}_0(U_0)$ (with respect to $\tau_{U_0}$).  By \cref{thm:poissonrev}, the function 
\begin{align}\label{eqn:dvn}
v_0(x)= \E_x \int_0^{\tau_{U_0}} L g(x_s) \, ds
\end{align}   
satisfies $Lv_0=- Lg$ on $U_0$ in the sense of distributions.  For $u_0(x):= \E_x g(x_{\tau_{U_0}})$, Dynkin's formula, the Dominated Convergence Theorem, and $\E_x \tau_{U_0} < \infty$ imply
\begin{align*}
u_0(x)= \lim_{t\to \infty} \E_x g(x_{\tau_{U_0}\wedge t})= g(x)+ \lim_{t\to \infty} \E_x \int_0^{\tau_{U_0}\wedge t} Lg(x_s) \, ds=g(x)+\E_x \int_0^{\tau_{U_0}} Lg(x_s) \, ds .
\end{align*}  
Hence, by \eqref{eqn:dvn} and $Lv_0=- Lg$, 
$$
L u_0 = Lg - Lg =0 \,\,\, \text{ on } \,\, U_0
$$ 
in the sense of distributions.  Since $L$ is hypoelliptic, $u_0 \in C^\infty(U_0)$ and $Lu_0 =0$ on $U_0$ in the classical sense. 

Next assume $g$ is merely continuous and supported on a bounded neighborhood $W_0$ of the boundary $\partial U_0$, and without loss of generality assume $g\equiv 0$ on $W_0^c$.  
Fix any $\psi \in C_0^\infty(\RR^\mathfrak{m};[0, \infty))$ with $\int_{\RR^\mathfrak{m}} \psi \, dx =1$, denote $\psi_\epsilon(x)= \epsilon^{-\mathfrak{m}} \psi(\epsilon^{-1} x)$ and set
\begin{align}
g_\epsilon(x)= \int_{\RR^\mathfrak{m}} g(y) \psi_\epsilon(x-y) \, dy \in C_0^\infty(\RR^\mathfrak{m}) \qquad \text{ and } \qquad u_\epsilon(x) = \E_x g_\epsilon (x_{\tau_{U_0}}). 
\end{align} 
Since $g_\epsilon$ is smooth,  by the first part of the proof, $u_\epsilon \in C^\infty(U_0)$ and $L u_\epsilon =0$ on $U_0$ in the classical sense. 
Furthermore, $g$ is bounded, and therefore $u_\epsilon$ is uniformly bounded and converges pointwise to $u_0(x)=\E_x g(x_{\tau_{U_0}})$ on $U_0$ as $\epsilon \to 0$.  It follows by \cref{lem:red} and hypoellipticity of $L$ that $u_0 \in C^\infty(U_0)$ and $Lu_0=0$ on $U_0$ in the classical sense.  

Finally, let $U_n$, $n\in \N$, be a sequence of bounded open sets with $\overline{U_n} \subset U$ and $U_n\uparrow U$.  Suppose that $g\in C(\X; [0, \infty))$ is nonnegative 
and by assumption $u_+(x):= \E_x g(x_\tau)$ is bounded on compact subsets of $U$.  Set $\tau_n = \inf\{ t>0 \, : \, x_t \notin U_n\}$ and note that we already proved that 
$u_{n, N}(x):= \E_x [g(x_{\tau_n}) \wedge N] \in C^\infty(U_n)$ with $L u_{n,N} =0$ on $U_n$ in the classical sense.  After passing $n\to \infty$,  \cref{lem:red},
the Dominated Convergence Theorem, and $\tau_n \to \tau$ imply  that $u_{\infty, N}(x):=  \E_x [g(x_{\tau}) \wedge N]$  satisfies $u_{\infty, N}\in C^\infty(U)$ with $L u_{\infty, N}=0$ on $U$ in the classical sense.  
 Passing  $N\to \infty$ and again applying \cref{lem:red} and Monotone convergence theorem, we find that $u_+\in C^\infty(U)$ with $Lu_+=0$ on $U$ in the classical sense.  
 The result follows after decomposing $g$ into positive and negative parts, $g=g_+- g_{-}$.    
\end{proof}

\subsection{Boundary behavior}
Note that under the hypotheses of~\cref{thm:poissonrev} and~\cref{thm:dirichletrev},  the formal stochastic solution $u_\s$ belongs to~\eqref{eqn:ustoch} is $C^\infty(U)$ and $u_\s$ is also classical solution of $L u_\s=-f$ on $U$, provided $f\in C^\infty(U)$.    
The next natural problem is to determine satisfaction of the boundary condition in~\eqref{eqn:pp}.  We explore it first for the Dirichlet part $u_{\s, 0}$.  

\begin{Theorem}
\label{thm:boundaryd}
Fix $x_* \in \partial U$ and $g\in C(\X)$.  Assume  \hyperlink{(U00)}{{\bf(U00)}}, \hyperlink{(NE)}{{\bf(NE)}}, and $\PP_x\{ \tau < \infty \}=1$ for all $x\in U$. 
 If   \hyperlink{(UIDgx)}{{\bf (UID($g,x_*$))}} and \hyperlink{(CEx)}{{\bf (CE($x_*$))}} are satisfied and $x_*$ is regular as in \cref{def:regular}, then $u_{\s, 0}(x) \to g(x_*)$ as $x\to x_*, x\in U$.  
\end{Theorem}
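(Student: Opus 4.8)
The plan is to exploit three independent sources of smallness—regularity of $x_*$, which via \cref{lem:approx}(ii) forces $\tau(x)\to 0$ in probability as $x\to x_*$; condition \hyperlink{(CEx)}{{\bf (CE($x_*$))}}, which confines the process to a fixed compact set $\X_n$ up to time $\tau$ with high probability; and a Doob/It\^o short-time estimate on that fixed compact set, preventing the process from moving far before time $\delta$—and to absorb the rare complementary events using the uniform integrability from \hyperlink{(UIDgx)}{{\bf (UID($g,x_*$))}}. The hypotheses \hyperlink{(NE)}{{\bf(NE)}} and $\PP_x\{\tau<\infty\}=1$ are used to guarantee that $x_\tau$, hence $u_{\s,0}$, is well defined and to apply \cref{lem:approx}(ii).

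Fix $\epsilon>0$. From \hyperlink{(UIDgx)}{{\bf (UID($g,x_*$))}} choose $\delta_*>0$ with $\mathcal{G}_{g,\delta_*}(x_*)$ uniformly integrable; this furnishes $\eta\in(0,1)$ with $\eta(1+|g(x_*)|)\le\epsilon$ such that, for every $x\in U$ with $|x-x_*|<\delta_*$, any event $A$ with $\PP_x(A)<\eta$ satisfies $\E_x[|g(x_\tau)|\mathbf{1}_A]<\epsilon$. By continuity of $g$ at $x_*$, fix $r\in(0,\delta_*)$ with $|g(y)-g(x_*)|<\epsilon$ whenever $|y-x_*|<r$. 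Apply \hyperlink{(CEx)}{{\bf (CE($x_*$))}} with $\delta_1=\eta/3$ to obtain $n$ and $\delta_2>0$ with $\PP_x\{\tau_{\X_n}<\tau\}<\eta/3$ for $x\in U$, $|x-x_*|<\delta_2$; since enlarging $n$ only shrinks the event $\{\tau_{\X_n}<\tau\}$, we may also assume $\overline{B_r(x_*)}\subset\X_n$. As $\overline{\X_n}\subset\X$ is compact, $b$ and $\sigma$ are bounded there, so Doob's maximal inequality and the It\^o isometry applied to $s\mapsto x_{s\wedge\tau_{\X_n}}$ yield a function $\Phi_n$ with $\Phi_n(\delta)\to0$ as $\delta\downarrow0$ and $\PP_x\{\sup_{s\le\delta}|x_{s\wedge\tau_{\X_n}}-x|\ge r/2\}\le\Phi_n(\delta)$ uniformly in $x\in\X$; fix $\delta>0$ with $\Phi_n(\delta)<\eta/3$. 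Finally, since $x_*$ is regular, \cref{lem:approx}(ii) gives $\rho>0$ with $\PP_x\{\tau\ge\delta\}<\eta/3$ for $x\in U$, $|x-x_*|<\rho$.

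Set $\rho_*=\min\{\rho,\delta_2,r/2\}$ and take $x\in U$ with $|x-x_*|<\rho_*$. On the complement of $\{\tau\ge\delta\}\cup\{\tau_{\X_n}<\tau\}\cup\{\sup_{s\le\delta}|x_{s\wedge\tau_{\X_n}}-x|\ge r/2\}$ one has $\tau<\delta$ and $\tau\le\tau_{\X_n}$, so $x_\tau=x_{\tau\wedge\tau_{\X_n}}$ and $|x_\tau-x|<r/2$, whence $|x_\tau-x_*|<r$; therefore
\[
\PP_x\{|x_\tau-x_*|\ge r\}\le\PP_x\{\tau\ge\delta\}+\PP_x\{\tau_{\X_n}<\tau\}+\PP_x\Big\{\sup_{s\le\delta}|x_{s\wedge\tau_{\X_n}}-x|\ge r/2\Big\}<\eta.
\]
Consequently, since $r<\delta_*$ places $x$ in the uniform-integrability window,
\begin{align*}
|u_{\s,0}(x)-g(x_*)| &\le \E_x\big[|g(x_\tau)-g(x_*)|\,\mathbf{1}_{\{|x_\tau-x_*|<r\}}\big] + \E_x\big[|g(x_\tau)|\,\mathbf{1}_{\{|x_\tau-x_*|\ge r\}}\big] \\
&\quad + |g(x_*)|\,\PP_x\{|x_\tau-x_*|\ge r\} \le \epsilon + \epsilon + |g(x_*)|\,\eta \le 3\epsilon.
\end{align*}
As $\epsilon>0$ was arbitrary, $u_{\s,0}(x)\to g(x_*)$ as $x\to x_*$, $x\in U$.

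The main obstacle is the bookkeeping of quantifiers: $n$ must be selected from \hyperlink{(CEx)}{{\bf (CE($x_*$))}} first, since only then is the compact set $\X_n$ fixed and its coefficient bounds available for choosing $\delta$; only after $\delta$ is fixed can \cref{lem:approx}(ii) be invoked to choose $\rho$; and the uniform-integrability threshold $\eta$ must be fixed at the very start so that the rare-event contributions are genuinely controlled. The analytic ingredients—the short-time Doob/It\^o estimate and the continuity argument for $g$—are routine once this ordering is respected.
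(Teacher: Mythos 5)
Your proof is correct, and the overall skeleton — uniform integrability plus \hyperlink{(CEx)}{{\bf (CE($x_*$))}} to absorb the rare ``bad'' event, regularity of $x_*$ via \cref{lem:approx}(ii) to make $\tau$ small in probability, then a short-time path estimate to keep $x_\tau$ near $x_*$ — matches the paper's. The difference is in how the ``good'' event is handled. The paper decomposes by $\{x_\tau\in\overline{\X_n}\}$ versus its complement, uniformly approximates $g$ by a Lipschitz $h$ on the compact set $\overline{\X_n}$, and then invokes \cref{lem:approx}(i) (stability in the initial condition, $\E\sup_{s\le\epsilon_*}|x_s(x)-x_s(x_*)|^2\wedge M\to 0$ as $x\to x_*$) together with path continuity of the trajectory started at $x_*$. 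You instead decompose directly by $\{|x_\tau-x_*|<r\}$, use only continuity of $g$ at $x_*$ (no Lipschitz approximation needed), and replace \cref{lem:approx}(i) by an elementary Doob/It\^o bound on the $\X_n$-stopped process, which works because the coefficients are bounded on $\overline{\X_n}$ and is uniform in $x$. This is a modest but genuine simplification: it avoids the comparison of two trajectories from different starting points and the two-step approximation of $g$, at the cost of having to note (as you do) that $n$ can be enlarged so that $\overline{B_r(x_*)}\subset\X_n$, which makes the coefficient bounds available before $\delta$ is chosen. Your remark about the ordering of the quantifiers is exactly the right thing to flag.
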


\begin{Remark}
Note that if $U$ is bounded, by extending $b,\sigma$ to $\RR^\mathfrak{m}$ so that $x_t$ is nonexplosive as in \cref{rem:nonexpl}, conditions  \hyperlink{(UIDgx)}{{\bf (UID($g,x_*$))}} 
and \hyperlink{(CEx)}{{\bf (CE($x_*$))}}  are 
 satisfied for every $x_* \in \partial U$.  Indeed, $g$ is continuous on $\RR^\mathfrak{m}$ and bounded on $\overline{U}$.  Moreover, one can set
  $\X_n=B_n$ so that for all $n$ large enough $\PP_x\{ \tau_{\X_n}  < \tau \} =0$ for all $x\in U$.  
\end{Remark}

\begin{Remark}
As discussed above,  checking that a given point $x_* \in \partial U$ is regular can be  challenging.  See~\cite{Kog_17,CFH_21} for some criteria.  
However, if it makes sense to slightly modify the set $U$, then the regularity of $x_*$ can be easier to verify.  See \cref{thm:modification} for further information. 
\end{Remark}

\begin{proof}[Proof of \cref{thm:boundaryd}]
By  \hyperlink{(UIDgx)}{{\bf (UID($g,x_*$))}}, choose $\delta >0$ so that $\mathcal{G}_{g, \delta}(x_*)$ as in~\eqref{eqn:defGg} is uniformly integrable.  For $|x-x_*|< \delta$ and $n\in \N$, write 
\begin{align*}
u_{\s, 0}(x)- g(x_*) &= \E_x[(g(x_\tau)-g(x_*)) \mathbf{1}\{ x_\tau \notin \overline{\X_n} \}] + \E_x[(g(x_\tau)-g(x_*)) \mathbf{1}\{ x_\tau \in \overline{\X_n} \}]\\
&=:T_1(x, n) +T_2(x,n).
\end{align*}
Fix $\epsilon >0$ and we  claim that there exists $n \in \N$ and $\delta_1\in (0, \delta]$ such that $|T_1(x,n)| < \epsilon/2$ for all $|x-x_*|< \delta_1, x\in U$.  Since the family $\mathcal{G}_{g, \delta}(x_*)$ is uniformly integrable, there is $\epsilon'>0$ be such that 
$$
\epsilon'<\frac{\epsilon}{4(|g(x_*)|+1)}
$$ 
and  whenever $\PP\{ A \}< \epsilon'$ we have $\E |X| \mathbf{1}_{A}< \tfrac{\epsilon}{4}$ for all $X\in \mathcal{G}_{g, \delta}(x_*)$.  However, by \hyperlink{(CEx)}{{\bf (CE($x_*$))}}, there exists $n \in \N, \delta_1\in (0, \delta]$ such that $|x-x_*|< \delta_1$, $x\in U$ implies 
\begin{align*}
\PP_x \{ x_ \tau \notin \overline{\X_n} \} \leq \PP_x \{ \tau_{\X_n} < \tau \} < \epsilon'.  
\end{align*} 
Hence, for such  $n$ and $\delta_1$, uniformly integrability gives $|T_1(x,n) |< \tfrac{\epsilon}{2}$ for all $|x-x_*|< \delta_1, x\in U$, establishing the claim.  

We next claim that for this choice of $n$, there exists $\delta_2 \in (0, \delta_1]$ such that $|T_2(x,n)|< \tfrac{\epsilon}{2}$, thus finishing the proof of the result.  Indeed, since $g$ is continuous on the compact set $\overline{\X_{n}}$, there is a Lipschitz function $h \in C(\overline{\X_{n}})$ such that $\| g- h\|_{L^\infty(\overline{\X_n})}< \frac{\epsilon}{8}$. 
Hence, for any $\epsilon_*>0$ we have 
\begin{align*}
|T_2(x,n)|& \leq \frac{\epsilon}{4} + \E_x [(h(x_\tau) - h(x_*))\mathbf{1} \{ x_\tau \in \overline{\X_n}\} ]\\
& \leq \frac{\epsilon}{4} +  C_1 \E |x_\tau(x) -x_*| \wedge C_2 \mathbf{1}\{ \tau < \epsilon_* \} + 2\|h \|_{L^\infty} \PP_x \{ \tau \geq \epsilon_* \}\\
& \leq  \frac{\epsilon}{4} +  C_1 \E \sup_{s\in [0, \epsilon_*]} |x_s -x_*| \wedge C_2 + 2\|h \|_{L^\infty} \PP_x \{ \tau \geq \epsilon_* \}=: \frac{\epsilon}{4}+ T_2^1+ T_2^2 \,,
\end{align*}
where $C_1, C_2 >0$ are constants depending only on $n$ and $h$.  To estimate $T_2^1$, observe that 
\begin{align*}
T_2^1&=C_1 \E \sup_{s\in [0, \epsilon_*]} |x_s(x)- x_*| \wedge C_2 \\
& \leq C_1 \big(\E \sup_{s\in [0, \epsilon_*]} |x_s(x)-x_s(x_*) |^2 \wedge C_2^2\big)^{1/2} +  C_1 \big(\E \sup_{s\in [0, \epsilon_*]} |x_s(x_*)-x_* |^2 \wedge C_2^2\big)^{1/2}
\end{align*}
where the last inequality follows  by  triangle  and  Cauchy-Schwarz inequalities.  Using \cref{lem:approx}(i) 
and path continuity of $x_t$, there exits $\delta_2 \in (0, \delta_1]$ and $\epsilon_* >0$ small enough so that $T_2^1 \leq \epsilon/8$ for all $x$ with $|x-x_*|< \delta_2$.  For $T_2^2$, 
 \cref{lem:approx}(ii) and the regularity of $x_*$, ensure that $T_2^2< \epsilon/8$ if 
 $\delta_2 > 0$ is sufficiently small.  This finishes the proof of the result.   
\end{proof}

\begin{Remark}\label{rmk:cfcx}
Note that if $U$ us bounded, then  \hyperlink{(CEx)}{{\bf (CE($x_*$))}}  is trivially satisfied by choosing $n$ large such that $U \subset \X_n$.
Here, we verify \hyperlink{(CEx)}{{\bf (CE($x_*$))}} for unbounded $U$ if $x\mapsto \E_x \tau$ is bounded on $B_{\delta_2}(x_*) \cap U$  for some $\delta_2 > 0$
and hypotheses of \cref{lem:nonexpl} are satisfied.

For any $t>0$, we have
\begin{align*}
\PP_x\{ \tau_{\X_n} < \tau \} \leq \PP_x\{ \tau \geq t \}+ \PP_x \{ \tau_{\X_n} \leq t \}.  
\end{align*}
Fix any $\delta_1 > 0$.  Then, Chebychev's inequality gives for all $x\in B_\delta(x_*) \cap U$ that
\begin{align*}
\PP_x \{\tau \geq t \} \leq t^{-1} \E_x \tau \leq C/t  \,.
\end{align*}
Fix $t >0$ large enough independent of $x$, such that $\PP_x\{ \tau \geq t \} < \frac{\delta_1}{2}$. 
As in the proof of \cref{lem:nonexpl}, we have
\begin{align*}
w_n e^{-Ct } \PP_x \{ \tau_{\X_n } \leq t \} \leq w(x) +D \,.
\end{align*}
with $w_n \to \infty$ as $n \to \infty$. 
For already fixed $t > 0$, there is 
 $n\in \N$ large enough so that $\PP_x \{ \tau_{\X_n} \leq t \} < \frac{\delta_1}{2}$ for each $x \in B_{\delta_2}(x_*)$ and ~\eqref{eqn:condexit} is satisfied.
 
\end{Remark}

Next, we derive sufficient conditions  on the Poisson part $v_{\s, 0}$ of $u_\s$ that ensure $v_{\s, 0}(x) \to 0$ as $x\to x_* \in \partial U$, $x\in U$.

\begin{Theorem}
Suppose  \hyperlink{(U00)}{{\bf(U00)}} and \hyperlink{(NE)}{{\bf(NE)}} are satisfied and that $x_* \in \partial U$ is regular as in~\cref{def:regular}. Suppose, furthermore, that conditions 
 \hyperlink{(UIPfx)}{{\bf (UIP($f, x_*$))}} and \hyperlink{(CEx)}{{\bf (CE($x_*$))}} are satisfied for some $f: U\to \RR$ which is measurable and bounded on $\X_n \cap U$ for all $n$. If for every $x\in U$, $\PP_x\{ \tau< \infty \}= 1$, then $v_{\s, 0}(x) \to 0$ as $x\to x_*$, $x\in U$.     
\end{Theorem}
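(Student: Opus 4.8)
The plan is to follow the strategy behind the proof of \cref{thm:boundaryd}, but to organize everything around the random variable $X(x) := \int_0^\tau f(x_s(x))\,ds$, where all the processes $s\mapsto x_s(x)$ are realized on the common filtered probability space with $x_0(x)=x$ and the same driving Brownian motion. First I would note that $X(x)$ is almost surely a well-defined finite random variable: by nonexplosivity the trajectory $\{x_s(x):s\in[0,\tau]\}$ has compact closure in $\X$, hence lies in $\X_m\cap U$ on $[0,\tau)$ for some (random) $m$ on which $f$ is bounded, so $\int_0^\tau|f(x_s)|\,ds<\infty$ a.s. Moreover \hyperlink{(UIPfx)}{{\bf (UIP($f, x_*$))}} asserts precisely that the family $\{X(x):|x-x_*|<\delta,\ x\in U\}$ is uniformly integrable for some $\delta>0$; in particular $v_{\s,0}$ is well defined near $x_*$ and $v_{\s,0}(x)=\E_x X(x)$. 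Thus, by the Vitali convergence theorem, it suffices to prove that $X(x)\to 0$ in probability as $x\to x_*$, $x\in U$ (interpreted via arbitrary sequences $x_k\to x_*$ in $U$, for each of which the argument below applies verbatim and then upgrades, through uniform integrability, to $\E_{x_k}X(x_k)\to 0$, whence the full limit).

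To prove convergence in probability, fix $\eta>0$ and aim to show $\limsup_{x\to x_*,\,x\in U}\PP\{|X(x)|>\eta\}=0$. Given $\eta'>0$, apply \hyperlink{(CEx)}{{\bf (CE($x_*$))}} to obtain $n\in\N$ and $\delta_2>0$ with $\PP_x\{\tau_{\X_n}<\tau\}<\eta'$ whenever $|x-x_*|<\delta_2$, $x\in U$; since $\{\tau_{\X_n}<\tau\}$ is decreasing in $n$ (because $\X_n\subset\X_{n+1}$ forces $\tau_{\X_n}\le\tau_{\X_{n+1}}$) and $\X_n\uparrow\X\supset\overline U$, we may enlarge $n$ and shrink $\delta_2$ so that also $B_{\delta_2}(x_*)\subset\X_n$, ensuring that for $x$ in this ball the process starts inside $\X_n\cap U$. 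Set $C_n:=\max\{1,\sup_{\X_n\cap U}|f|\}<\infty$, which is finite by hypothesis. On the event $\{\tau_{\X_n}\ge\tau\}$ one has $x_s\in\X_n\cap U$ for every $s\in[0,\tau)$, hence $|X(x)|\le C_n\tau$, so for $|x-x_*|<\delta_2$, $x\in U$,
\[
\PP\{|X(x)|>\eta\}\ \le\ \PP_x\{\tau_{\X_n}<\tau\}+\PP_x\{\tau>\eta/C_n\}\ \le\ \eta'+\PP_x\{\tau>\eta/C_n\}.
\]
Since $x_*$ is regular, \cref{lem:approx}(ii) gives $\PP_x\{\tau>\eta/C_n\}\to 0$ as $x\to x_*$, $x\in U$, so $\limsup_{x\to x_*}\PP\{|X(x)|>\eta\}\le\eta'$; letting $\eta'\downarrow 0$ establishes the claim, and hence the theorem.

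I do not anticipate a genuine obstacle: the proof uses only (U00), (NE), the hypothesis $\PP_x\{\tau<\infty\}=1$ (which is what makes $\tau$ and the stated integrals finite and is exactly what \cref{lem:approx}(ii) exploits), local boundedness of $f$, and the two boundary hypotheses \hyperlink{(UIPfx)}{{\bf (UIP($f, x_*$))}} and \hyperlink{(CEx)}{{\bf (CE($x_*$))}}, with no appeal to hypoellipticity. The only points demanding care are the measure-theoretic bookkeeping already flagged — that $X(x)$ is a.s.\ finite, that uniform integrability plus convergence in probability yields $L^1$ convergence along sequences, and that on $\{\tau_{\X_n}\ge\tau\}$ the path stays in $\X_n\cap U$ up to time $\tau$, which is why arranging $B_{\delta_2}(x_*)\subset\X_n$ (so that $x\in\X_n\cap U$) is necessary.
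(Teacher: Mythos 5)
Your argument is correct, and the underlying estimates are the same three ingredients the paper uses: condition \hyperlink{(CEx)}{{\bf (CE($x_*$))}} to make the escape event $\{\tau_{\X_n}<\tau\}$ have small probability, boundedness of $f$ on $\X_n\cap U$ to bound the integral on the complementary event, and regularity of $x_*$ via \cref{lem:approx}(ii) to make $\tau$ small in probability. The difference is organizational: the paper works directly on $v_{\s,0}(x)=\E_x Y_\tau$ and splits it into three expectations over the events $\{\tau_{\X_n}<\tau\}$, $\{\tau_{\X_n}\ge\tau,\tau\le\delta\}$, and $\{\tau>\delta\}$, invoking uniform integrability of $\mathcal G_\delta^f(x_*)$ twice (once for each small-probability event) and the deterministic bound $|Y_{\tau\wedge\delta}|\le\delta\|f\|_{L^\infty(\X_n\cap U)}$ for the middle term. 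You instead first prove $X(x)\to0$ in probability using only a two-way split and the pathwise bound $|X(x)|\le C_n\tau$ on $\{\tau_{\X_n}\ge\tau\}$, and then invoke uniform integrability once through the Vitali convergence theorem (along sequences) to upgrade to $L^1$ convergence. This is a somewhat cleaner modular presentation of the same argument: the Vitali framing makes the role of \hyperlink{(UIPfx)}{{\bf (UIP($f,x_*$))}} transparent as a single appeal rather than two, while the paper's direct estimate avoids naming a theorem and keeps everything at the level of explicit $\epsilon$-bookkeeping. One point you flagged — arranging $B_{\delta_2}(x_*)\subset\X_n$ so the process starts in $\X_n\cap U$ — is in fact automatic once $\eta'<1$, since otherwise $\PP_x\{\tau_{\X_n}<\tau\}=1$ would contradict \hyperlink{(CEx)}{{\bf (CE($x_*$))}}; either way of handling it is fine.
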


\begin{proof}
Let $\epsilon >0$ and by \hyperlink{(UIPfx)}{{\bf (UIP($f, x_*$))}}  there is $\delta_1 >0$ such that for all $x\in U$ with $|x-x_*|< \delta_1$ the family $\mathcal{G}_{\delta_1}^f(x_*)$ in~\eqref{eqn:defGf} is uniformly integrable.  For simplicity, set $Y_t = \textstyle{\int_0^t f(x_s) \, ds}$.  Then, since $f$ is bounded on $\X_n \cap U$, for any $\delta > 0$
\begin{align*}
|v_{\s,0}(x)| &\leq  |\E_x Y_\tau  \mathbf{1} \{ \tau_{\X_n} < \tau \}|+ |\E_x Y_\tau \mathbf{1} \{ \tau_{\X_n} \geq \tau \}|  \\
& \leq   |\E_x Y_\tau  \mathbf{1} \{ \tau_{\X_n} < \tau \}|+ |\E_x Y_{\tau\wedge \delta} \mathbf{1} \{ \tau_{\X_n} \geq \tau, \tau \leq \delta  \}|+|\E_x Y_{\tau} \mathbf{1} \{ \tau > \delta  \}|  \\
& \leq |\E_x Y_\tau  \mathbf{1} \{ \tau_{\X_n} < \tau \}|+ \delta \| f\|_{L^\infty(\X_n \cap U)}+|\E_x Y_{\tau} \mathbf{1} \{ \tau > \delta  \}|.
\end{align*}
By uniform integrability, there exists $\epsilon' >0$ such that $\PP\{ A\} < \epsilon'$ implies $\E_x Y \mathbf{1}_A< \epsilon/3$ for all $|x - x_*| < \delta_1$.  By  \hyperlink{(CEx)}{{\bf (CE($x_*$))}}, there are
 $n\in \N, \delta_2 \in (0, \delta_1]$ such that $\PP_x \{ \tau_{\X_n} < \tau \} < \epsilon' $ for all $|x-x_*|< \delta_2$, $x\in U$.  For this choice of $n$, let 
 $$
 \delta = \frac{\epsilon}{3(\| f\|_{L^\infty(\X_n \cap U)}+1)}.
 $$  
By making $\delta_2>0$ smaller if necessary, by \cref{lem:approx}(ii) we can ensure that $\PP_x\{ \tau > \delta\} < \epsilon'$ for all $|x-x_*|< \delta_2$, $x\in U$.  The result  
follows  since for $|x-x_*|< \delta_2$, $x\in U$, we have $|v_{\s, 0}(x) | < \epsilon$ and $\epsilon > 0$ is arbitrary.    
\end{proof}

We can now combine the previous results and relate them back to the original problem~\eqref{eqn:pp}.  

\begin{Corollary}
Assume  \hyperlink{(U00)}{{\bf(U00)}}, \hyperlink{(NE)}{{\bf(NE)}}, and $\PP_x \{ \tau< \infty \} =1$ for all $x\in U$.  Let  $g\in C(\X)$ and $f\in C^\infty(U) \cap \mathcal{D}_0(U)$ be such that $u_{\s, 0}$ is bounded on compact subsets of $U$ and $f\in B(\X_n \cap U)$ for all $n$.  If $U$ is boundary regular for $x_t$ and the conditions  \hyperlink{(UIDg)}{{\bf (UID($g$))}}, \hyperlink{(UIPf)}{{\bf (UIP($f$))}}, 
and \hyperlink{(CE)}{{\bf (CE)}} are satisfied, then $u_\s$ is a classical solution of~\eqref{eqn:pp}.  If $U$ is furthermore assumed to be bounded, then $u_\s$ is the unique classical solution of~\eqref{eqn:pp}.     
\end{Corollary}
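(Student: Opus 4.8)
The plan is to note that the formal stochastic representation \eqref{eqn:ustoch} splits as $u_\s = u_{\s,0}+v_{\s,0}$, where $u_{\s,0}(x)=\E_x g(x_\tau)$ is the Dirichlet part \eqref{eqn:ustochrev} and $v_{\s,0}(x)=\E_x\int_0^\tau f(x_s)\,ds$ is the Poisson part \eqref{eqn:vstochrev}, and then to assemble the interior-regularity and boundary-limit statements already proved for each piece. First I would check that this decomposition is legitimate: $v_{\s,0}$ is finite on $U$ because $f\in\mathcal{D}_0(U)$ and $\PP_x\{\tau<\infty\}=1$, and $u_{\s,0}$ is finite on $U$ since it is assumed bounded on compact subsets of $U$ (together with $\tau<\infty$ a.s., $g\in C(\X)$, and $x_\tau\in\overline U\subset\X$); hence $u_\s$ itself is well defined on $U$.

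Next I would establish the interior half of \eqref{eqn:pp}. Using \cref{prop:bregimintreg}, boundary regularity of $U$ upgrades to interior regularity, so every point of $U$ is regular. Then \cref{thm:poissonrev}, applied with $f\in C^\infty(U)\cap\mathcal{D}_0(U)$, gives $v_{\s,0}\in C^\infty(U)$ and $Lv_{\s,0}=-f$ on $U$ in the classical sense; and \cref{thm:dirichletrev}, applied with $g\in C(\X)$ and $u_{\s,0}$ bounded on compacts, gives $u_{\s,0}\in C^\infty(U)$ and $Lu_{\s,0}=0$ on $U$ in the classical sense (invoking \hyperlink{(PH)}{{\bf (PH)}}, as those two theorems require). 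Adding, $u_\s\in C^\infty(U)\subset C^2(U)$ with $Lu_\s=Lu_{\s,0}+Lv_{\s,0}=-f$ on $U$ classically.

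For the boundary condition I would fix an arbitrary $x_*\in\partial U$, which is regular because $U$ is boundary regular. Conditions \hyperlink{(UIDg)}{{\bf (UID($g$))}} and \hyperlink{(CE)}{{\bf (CE)}} yield \hyperlink{(UIDgx)}{{\bf (UID($g,x_*$))}} and \hyperlink{(CEx)}{{\bf (CE($x_*$))}}, so \cref{thm:boundaryd} gives $u_{\s,0}(x)\to g(x_*)$ as $x\to x_*$, $x\in U$. Likewise, \hyperlink{(UIPf)}{{\bf (UIP($f$))}} and \hyperlink{(CE)}{{\bf (CE)}} supply the per-point hypotheses, and since $f$ is measurable and bounded on $\X_n\cap U$ for every $n$, the theorem stated immediately before this corollary gives $v_{\s,0}(x)\to 0$ as $x\to x_*$, $x\in U$. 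Summing, $u_\s(x)\to g(x_*)$; since $x_*$ was arbitrary this is exactly the boundary condition in \eqref{eqn:pp}, so $u_\s$ is a classical solution of \eqref{eqn:pp}.

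Finally, for uniqueness when $U$ is bounded, I would take two classical solutions $u^{(1)},u^{(2)}$ and set $w=u^{(1)}-u^{(2)}\in C^2(U)$, so $Lw=0$ on $U$ and $w(x)\to 0$ as $x\to x_*$, $x\in U$, for every $x_*\in\partial U$; extending $w$ by $0$ to $\partial U$ gives $\overline w\in C(\overline U)$, which is bounded since $\overline U$ is compact. Choosing bounded open $U_n\uparrow U$ with $\overline{U_n}\subset U$ and setting $\tau_n=\tau_{U_n}$, Dynkin's formula \eqref{eqn:Dynkin} applied to a compactly supported $C^2$ function that coincides with $w$ on a neighborhood of $\overline{U_n}$ (where $Lw=0$) and to the bounded stopping time $t\wedge\tau_n$ gives $w(x)=\E_x w(x_{t\wedge\tau_n})$ for all $t\ge 0$, $x\in U_n$. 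I would then let $n\to\infty$, using $\tau_n\uparrow\tau_0=\tau$, path continuity, and bounded convergence for $\overline w$, to get $w(x)=\E_x\overline w(x_{t\wedge\tau})$; letting $t\to\infty$ and using $\PP_x\{\tau<\infty\}=1$ together with $x_\tau\in\partial U$ yields $w(x)=\E_x\overline w(x_\tau)=0$, hence $u^{(1)}=u^{(2)}$ and $u_\s$ is the unique classical solution. I expect no serious obstacle here: essentially all the analytic content is already packaged in the cited theorems and the corollary is an assembly, with the only place requiring mild care being the double passage to the limit in the uniqueness step (verifying $x_\tau\in\partial U$ and that $\overline w$ is a legitimate bounded continuous function on $\overline U$).
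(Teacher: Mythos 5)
Your proof is correct and follows the same essential route as the paper: existence is assembled from \cref{thm:poissonrev}, \cref{thm:dirichletrev}, \cref{thm:boundaryd} and the unnumbered Poisson-boundary theorem, and uniqueness is established via Dynkin's formula along an exhaustion $U_n\uparrow U$. You correctly flag that \hyperlink{(PH)}{{\bf (PH)}} is implicitly required by the interior-smoothness theorems even though the corollary as stated does not list it. The one organizational difference is in the uniqueness step: the paper applies Dynkin's formula directly to a classical solution $u$ (so the $\int_0^{t\wedge\tau_{U_n}} f(x_s)\,ds$ term is present, requiring dominated convergence with $f\in\mathcal{D}_0(U)$), and shows $u=u_\s$; you instead work with the difference $w=u^{(1)}-u^{(2)}$, which satisfies $Lw=0$ and zero boundary data, eliminating the $f$ term and making the limit passage cleaner. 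You also take $n\to\infty$ before $t\to\infty$ whereas the paper does the reverse; both orders work given $\PP_x\{\tau<\infty\}=1$, boundedness of $\overline{w}$ on the compact $\overline U$, and path continuity. The invocation of \cref{prop:bregimintreg} (interior regularity) is harmless but not actually used by any of the cited theorems.
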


\begin{proof}
The only assertion we have left to prove is the uniqueness, when $U$ is bounded. 

Let $\{U_n \}$ be a sequence of bounded open subsets of $U$ with 
$\overline{U}_n \subset U_{n+1}$ and $U= \cup_n U_n$.  If $u$ is a classical solution of~\eqref{eqn:pp}, then $u$ is smooth on $U_n$ for each $n$, and by 
Dynkin's formula~\eqref{eqn:Dynkin}
 we have for any $x\in U$ and $n\in \mathbf{N}$ large enough
 \begin{align*}
\E_x u(x_{t\wedge \tau_{U_n}}) = u(x)+ \E_x\int_{0}^{t\wedge \tau_{U_n}} Lu(x_s) \, ds =u(x)-  \E_x\int_{0}^{t\wedge \tau_{U_n}} f(x_s) \, ds, 
\end{align*} 
and therefore
\begin{align*}
u(x)= \E_x u(x_{t\wedge \tau_{U_n}})+ \E_x\int_{0}^{t\wedge \tau_{U_n}} f(x_s) \, ds.
\end{align*} 
Since $u$ is continuous on the compact set $\overline{U}$, $u$ is bounded on $U$. Also, since $g \in C(\X)$ and $f\in C^\infty(U) \cap \mathcal{D}_0(U)$, passing $t\rightarrow \infty$ and then $n\rightarrow \infty$, boundary regularity and the Dominated Convergence Theorem imply that 
\begin{align*}
u(x)= u_\s(x).  
\end{align*}
\end{proof}

\section{The Transience and Recurrence Dichotomy for Degenerate Diffusions}

\label{sec:recurrence}
The goal of this section is to carefully establish the dichotomy between transience and recurrence for degenerate diffusions by adapting the classic cycle constructions of Khasminskii~\cite{Khas_60, Khas_60-rus}, which was carried out in the setting of elliptic diffusions on Euclidean space.  Note that this has been done previously using the language of invariant control sets as in~\cite{Kliem_87}.  However, we found a gap in the arguments in~\cite{Kliem_87} that we could not fix in an obvious way (see \cref{rem:gaps} below).  
Moreover, some regularity claims in~\cite{Kliem_87} could not be verified without calculations analogous to ones 
in the previous sections.  Although it is known that there are alternative, probabilistic paths which circumnavigate these issues (we refer, in particular, to the work of Harris~\cite{Har_56}, the survey paper of Baxendale~\cite{Bax_05} and the work of Meyn-Tweedie~\cite{MT_12}), here we establish the dichotomy using  classical ideas of Khasminskii and regularity properties established above. 
 Note that this approach traces back to Maruyama and Tanaka~\cite{Mar_57} and Watanabe~\cite{Wat_58} 
in the case of a one-dimensional, elliptic diffusion.  We also refer to the works~\cite{AZ_66, GT_15} which we found helpful.

\subsection{Nice diffusions}
In this subsection, we briefly introduce the structural assumptions we make on the diffusion $x_t$ satisfying~\eqref{eqn:sde} in this section. Recall that by condition \hyperlink{(NE)}{{\bf(NE)}}, $x_t \in \X$ for all $t \geq 0$ and any
initial condition $x_0 = x \in \X$.

In order to formulate our results, we need a notion of \emph{irreducibility} of $x_t$ as introduced in the following definition.  
\begin{Definition}\label{def:trav}
Suppose that condition \hyperlink{(NE)}{{\bf(NE)}} is satisfied. We call $x_t$ \emph{irreducible} if
for any $x,y \in \X$ and $\delta >0$ we have $\PP_y\{ \tau_{B_{\delta}(x)^c}< \infty \} >0$.  
\end{Definition}

Note that irreducibility means that, for all $x,y\in \X$, the process started at $y\in \X$ enters an arbitrarily small neighborhood of $x\in \X$ with positive probability.  Thus, 
the process can transition between arbitrarily small neighborhoods of any two points in $\X$.

\begin{Remark}
Comparing terminology, if $x_t$ is irreducible in the sense of  \cref{def:trav}, then $\X$ is the unique invariant control set of $x_t$ as in~\cite{Kliem_87}.  
Certainly, the methods used below can be applied in more general settings, e.g. 
 if there is more than one invariant control set or if the process $x_t$ eventually enters an invariant control set from a larger set to not return to other parts of space.  For our purposes, one irreducible set $\X$ is sufficient.    
\end{Remark}

We are  ready to introduce the assumptions we impose on the diffusion $x_t$ in this section.  

\begin{Definition}\label{def:anice}
We say that the diffusion $x_t$ is \emph{nice} if the following conditions are met:
\begin{itemize}
\item[(i)]  Condition \hyperlink{(NE)}{{\bf(NE)}} is satisfied;
\item[(ii)]  The generator $L$ of $x_t$ is satisfies the parabolic H\"{o}rmander condition on $\X$ as in~\cref{def:parahor}; 
\item[(iii)] $x_t$ is irreducible as in~\cref{def:trav}. 
\end{itemize}
\end{Definition}

One key property of a nice diffusion employed below is that the process leaves bounded sets in $\X$ sufficiently fast.   
\begin{Proposition}
\label{prop:logexit}
Suppose that $x_t$ is a nice diffusion and $U\subset \X$ is nonempty, open and bounded with $\overline{U} \subset \X$.  Then there exists $\delta >0$ such that 
\begin{align}
\sup_{x \in \overline{U}} \E_x e^{\delta \tau_{\overline{U}}}< \infty. 
\end{align}  
\end{Proposition}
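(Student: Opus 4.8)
The plan is to establish the uniform exponential integrability of $\tau_{\overline U}$ via the standard two-step recipe: first produce a uniform-in-$x$ bound $\sup_{x\in\overline U}\PP_x\{\tau_{\overline U}>t_*\}\le 1-\alpha_*$ for some fixed $t_*>0$ and $\alpha_*\in(0,1)$, and then convert this into an exponential moment bound by the usual Markov-property geometric-series argument. The exponential bound itself is routine once the uniform tail estimate is in hand: by the Markov property $\PP_x\{\tau_{\overline U}>mt_*\}\le(1-\alpha_*)^m$ for all $x\in\overline U$ and $m\in\N$ (this is exactly the computation already carried out in the proof of \cref{prop:expexit}(i)), and then choosing $\delta>0$ small enough that $e^{\delta t_*}(1-\alpha_*)<1$ gives
\begin{align*}
\sup_{x\in\overline U}\E_x e^{\delta\tau_{\overline U}}\le e^{\delta t_*}\sum_{m=0}^\infty e^{\delta m t_*}\PP_x\{\tau_{\overline U}>mt_*\}\le e^{\delta t_*}\sum_{m=0}^\infty\big(e^{\delta t_*}(1-\alpha_*)\big)^m<\infty.
\end{align*}

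So the real content is the uniform tail bound, and this is where niceness of the diffusion enters. First I would fix a slightly larger bounded open set $U'$ with $\overline U\subset U'$ and $\overline{U'}\subset\X$. By irreducibility (\cref{def:trav}), for \emph{each} point $x\in\overline U$ the process started at $x$ exits $\overline{U'}$ (hence certainly $\overline U$) with positive probability; more precisely, picking any $y\notin\overline{U'}$ and a small ball $B_\rho(y)$ disjoint from $\overline{U'}$, irreducibility gives $\PP_x\{\tau_{B_\rho(y)^c}<\infty\}>0$, and since reaching $B_\rho(y)$ forces exiting $\overline U$, we get $\PP_x\{\tau_{\overline U}<\infty\}>0$, hence there is $t=t(x)>0$ with $\PP_x\{\tau_{\overline U}>t\}\le 1-2\alpha(x)$ for some $\alpha(x)\in(0,1/2)$. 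Next I would upgrade this pointwise statement to a local one: by \cref{prop:exitprob}(i), $(t,x)\mapsto\PP_x\{\tau_{\overline U}>t\}$ — which is $\tilde{\mathcal P}_t(x,\overline U^{\,\circ})$ for the appropriate stopped process, or can be handled directly — is continuous (indeed smooth) in $x$, so there is a neighborhood $B_{\epsilon(x)}(x)$ on which $\PP_y\{\tau_{\overline U}>t(x)\}\le 1-\alpha(x)$; alternatively one can invoke \cref{lem:approx}(i) together with path continuity to get this continuity. Covering the compact set $\overline U$ by finitely many such balls and taking $t_*$ to be the maximum of the finitely many times $t(x_i)$ and $\alpha_*$ the minimum of the $\alpha(x_i)$, monotonicity of $t\mapsto\PP_y\{\tau_{\overline U}>t\}$ yields $\PP_y\{\tau_{\overline U}>t_*\}\le 1-\alpha_*$ for all $y\in\overline U$, as desired.

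The main obstacle is the passage from irreducibility (which a priori only gives $\PP_x\{\tau_{\overline U}<\infty\}>0$ for a single starting point, with no uniformity and no control on the time horizon) to the finite-time, uniform-in-$x$ bound. The compactness argument handles the uniformity cleanly, but one must be a little careful that the function $x\mapsto\PP_x\{\tau_{\overline U}>t\}$ is genuinely continuous (not merely measurable) on $\overline U$ including boundary points — here \cref{prop:exitprob}(i) applied on a slightly enlarged domain, or a direct argument via \cref{lem:approx}(i) and the fact that the set $\{\tau_{\overline U}>t\}$ is "almost" open in path space, is exactly what is needed; and one should note that since $\overline U\subset U'$, the exit time $\tau_{\overline U}$ when started inside $\overline U$ behaves well (no instantaneous re-entry subtleties matter for an upper bound on the tail). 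Everything else — the geometric decay and the choice of $\delta$ — is the same elementary computation already appearing in \cref{prop:expexit}(i), now carried out with the exponential weight included.
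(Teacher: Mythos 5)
Your proposal is correct and follows essentially the same route as the paper: use irreducibility to get a pointwise positive exit probability, upgrade to a uniform tail bound $\sup_{x\in\overline U}\PP_x\{\tau_{\overline U}>t_*\}\le 1-\alpha_*$ via the smoothness/continuity from \cref{prop:exitprob}(i) and compactness of $\overline U$, then iterate with the Markov property and sum the resulting geometric series for small $\delta$. The one place you have to be careful — that \cref{prop:exitprob}(i) gives smoothness only on an open set, so $\PP_x\{\tau_{\overline U}>t\}$ should be dominated by the tail of an exit time from a slightly larger open region (the paper uses $B_\epsilon(z)^c$ for $z\notin\overline U$, your $U'$ plays the same role) — you flag explicitly, so the argument is sound.
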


\begin{proof}
The proof of this result follows a similar  reasoning used in the proof of \cref{prop:expexit}(i).  Let $z\in \X \setminus \overline{U}$ and fix $\epsilon >0$ such that $B_\epsilon(z) \subset \X \setminus \overline{U}$.  By irreducibility of $x_t$, for all $x\in \overline{U}$ there exists $t=t(x)>0$ and $\alpha=\alpha(x)\in (0,1)$ such that  
\begin{align*}
\PP_x \{ \tau_{B_{\epsilon}(z)^c} > t \} \leq 1- \alpha.  
\end{align*}   
Applying \cref{prop:exitprob}(i) along with the parabolic H\"{o}rmander condition on $\X$ and compactness of $\overline{U}$, there exists $t_*>0$ and $\alpha_* \in (0,1)$ independent of $x$ so that 
\begin{align}
\PP_x \{ \tau_{\overline{U}} > t_* \} \leq \PP_x \{ \tau_{B_{\epsilon}(z)^c} > t_* \} \leq 1- \alpha_* \,\,\, \text{ for all } \,\,\, x\in \overline{U}.   
\end{align}
Following the proof of~\cref{prop:expexit}(i), the Markov property implies that 
\begin{align*}
\PP_x \{ \tau_{\overline{U}} \geq  m t_* \} \leq (1-\alpha_*)^{m-1} 
\end{align*}
for all $m\in \N$ and all $x\in \overline{U}$.  Hence $\tau_{\overline{U}}< \infty$, $\PP_x$-almost surely.  Furthermore, choosing $\delta= \delta(t_*, \alpha_*) >0$ small enough so that 
$e^{\delta t_*} (1-\alpha_*) < 1$, it follows that for any $x\in \overline{U}$:
\begin{align*}
\E_x e^{\delta \tau_{\overline{U}}} = \sum_{m=1}^\infty \E_x  e^{\delta \tau_{\overline{U}}} \mathbf{1} \{ (m-1)t_* \leq \tau_{\overline{U}} < m t_* \} &\leq  \sum_{m=1}^\infty  e^{\delta m t_*}\PP_x\{ \tau_{\overline{U}} \geq (m-1)t_* \}\\
& \leq (1-\alpha_*)^{-2} \sum_{m=1}^\infty ( e^{\delta t_*} (1-\alpha_*))^m < \infty.  
\end{align*}
\end{proof}

\begin{Remark}
Yet another way to rephrase the conclusion of~\cref{prop:logexit} is that the process $x_t$ exits any bounded domain in $\X$ in \emph{logarithmic time} or \emph{exponentially fast} on average.  
\end{Remark}

\subsection{Recurrence and transience for nice diffusions}

We start with the definition of transience and recurrence.  

\begin{Definition}
Suppose the diffusion $x_t$ is nice.  
We say that a point $x\in \X$ is \emph{recurrent} if for any $\delta >0$ and any $y\in \X \setminus  B_{\delta}(x)$
\begin{align}
\PP_y \{ \tau_{B_\delta(x)^c} < \infty\}=1.  
\end{align}
Otherwise, we say $x\in \X$ is \emph{transient}.      
\end{Definition}

Our next goal is to prove that points in $\X$ are either all recurrent, in which case we call $x_t$ \emph{recurrent}, or all transient, in which case we call $x_t$ \emph{transient}.   Thus, the dichotomy between transience and recurrence is established in the following proposition. Afterwards, we establish further properties of transience and recurrence.

\begin{Proposition}\label{prop:dicho}
Assume $x_t$ is  nice. 
If $x\in \X$ is recurrent, then all points in $\X$ are recurrent.  Consequently, either all points in $\X$ are recurrent or all points in $\X$ are transient. 
\end{Proposition}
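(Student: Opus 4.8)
The plan is to show that recurrence is a property that propagates from a single point to all of $\X$, using the irreducibility of a nice diffusion together with the strong Markov property and the exponential exit estimate from \cref{prop:logexit}. Suppose $x_0 \in \X$ is recurrent and let $z \in \X$ be arbitrary; we want to show $z$ is recurrent, i.e. that for every $\delta > 0$ and every $y \in \X \setminus B_\delta(z)$ the process started at $y$ enters $B_\delta(z)$ almost surely. Fix such $\delta$ and $y$. First I would use irreducibility to produce, from $z$, a bounded open set $O$ with $\overline{O} \subset \X$, a small ball $B_{\delta_0}(x_0) \subset O$, a time $t_0$, and a probability $p_0 > 0$ such that starting from \emph{any} point of $\partial B_{\delta_0}(x_0)$ (or a suitable small neighborhood of $x_0$), the process reaches $B_\delta(z)$ before time $t_0$ with probability at least $p_0$; this uses \cref{prop:exitprob}(i) (continuity in the initial condition of $\PP_x\{\tau > t\}$ for the relevant stopped process) and compactness, exactly as in the proof of \cref{prop:logexit}. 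The key point is that this lower bound $p_0$ is uniform over starting points near $x_0$.

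Next I would set up an excursion/cycle argument. Starting from $y$, since $x_0$ is recurrent, the process hits $B_{\delta_0}(x_0)$ almost surely at some finite stopping time $\sigma_1$. By the strong Markov property at $\sigma_1$ and the uniform lower bound $p_0$, with probability at least $p_0$ the process then enters $B_\delta(z)$ within the next $t_0$ units of time; if it fails, then — either because it simply wandered away, or after it has left $B_\delta(z)$'s complement appropriately — it must again return to $B_{\delta_0}(x_0)$ almost surely (recurrence of $x_0$ applied to whatever point the process occupies at the end of this trial), giving a stopping time $\sigma_2 < \infty$, and we repeat. Each independent "trial" succeeds with probability $\geq p_0$ conditionally on the past, so by a Borel–Cantelli / geometric-series argument the process hits $B_\delta(z)$ after finitely many trials almost surely. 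Here one must be a little careful to phrase the trials so that the strong Markov property applies cleanly: define $\sigma_1 = \tau_{B_{\delta_0}(x_0)^c}$-type hitting times interleaved with the fixed-duration windows $[\sigma_k, \sigma_k + t_0]$, and on the complement of "success in window $k$" invoke recurrence of $x_0$ to get $\sigma_{k+1} < \infty$ a.s. The conditional success probabilities are bounded below by $p_0$ regardless of where in $B_{\delta_0}(x_0)$ the process lands, so $\PP_y\{\text{never hit } B_\delta(z)\} \leq \lim_k (1-p_0)^k = 0$.

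The main obstacle I anticipate is the bookkeeping that makes the strong Markov decomposition rigorous — in particular, ensuring each trial is genuinely a fresh start from a point in $B_{\delta_0}(x_0)$ and that the "failure then return" step is legitimately an application of recurrence of $x_0$ (one needs that after a failed window the process is at some point of $\X$, from which $x_0$ is again reached a.s. since $x_0$ is recurrent for \emph{every} starting point in $\X \setminus B_{\delta_0}(x_0)$, and one must handle the case where the window ended inside $B_{\delta_0}(x_0)$ separately, which is fine). A secondary subtlety is obtaining the uniform-in-starting-point lower bound $p_0$: this is where \cref{prop:exitprob}(i) and the parabolic Hörmander condition enter, exactly mirroring the argument in \cref{prop:logexit}, so it should be routine. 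Once $z$ is shown recurrent, the final sentence of the proposition is immediate: if some point is recurrent then all are, hence either all points of $\X$ are recurrent or (the negation) no point is recurrent, i.e. every point is transient.
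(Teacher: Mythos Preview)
Your proposal is correct and follows essentially the same route as the paper: irreducibility plus continuity in the initial condition gives a uniform lower bound $p_0$ for reaching $B_\delta(z)$ from a small ball around the recurrent point, and then a cycle of stopping times (return to that ball, wait a fixed time, repeat) together with the strong Markov property yields a geometric bound and Borel--Cantelli finishes. The only cosmetic difference is that the paper obtains the uniform lower bound via continuity of $w\mapsto \mathcal{P}_{t_*}(w,B_\delta(z))$ from \cref{thm:kol} and \cref{rem:extn}, whereas you phrase it through \cref{prop:exitprob}(i); both amount to the same thing.
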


\begin{proof}
Suppose $x\in \X$ is recurrent and let $y \in \X$ with $x\neq y$.  We show that $y$ is also recurrent.  Suppose that $\delta > 0$ is any positive real number such that $x\notin B_{2\delta}(y)$ and $\overline{B_{2\delta}(y)} \subset \X$.  
By irreducibility in  \cref{def:trav} and path continuity (if $x_t \in B_\delta (y)$ for some t, then the inclusion holds for rational $t$), 
 there exists $t_*>0$ such that 
\begin{align}
2\alpha := P_{t_*}(x, B_\delta(y)) >0.  
\end{align}   
Since $L$ satisfies the parabolic H\"{o}rmander condition on $\X$, $w\mapsto P_{t_*}(w, B_\delta(y))$ is continuous at $x$ by  \cref{rem:extn} and \cref{thm:kol}.  
In particular, there exists $\epsilon >0$ such that $B_\epsilon(x) \cap B_{2\delta}(y) =\emptyset$, $\overline{B_\epsilon(x)}\subset \X$ and $P_{t_*}(w, B_\delta(y)) \geq \alpha$ for all $w\in \overline{B_\epsilon(x)}$.  Define stopping times $\sigma_j, j=0,1,\ldots$, inductively as follows:
\begin{align*}
\sigma_1&= \inf\{ t\geq 0 \, : \, x_t \in B_\epsilon(x)\}, \\
\sigma_{2}&= \sigma_1 + t_*\\
\sigma_3&= \inf\{ t\geq \sigma_2 \, : \, x_t \in B_\epsilon(x) \} \\
\vdots & \qquad \vdots \\
\sigma_{2k}&= \sigma_{2k-1}+t_* \\ 
\sigma_{2k+1}& = \inf\{ t\geq \sigma_{2k} \, : \, x_t \in B_\epsilon(x) \}.
\end{align*}
for $k\geq 2$. 
Then for all $j\geq 1$, the stopping time $\sigma_j$ is almost surely finite since $x_t$ is nice and $x$ is recurrent.  Next, by the strong Markov property we have for each $j \geq 0$:
\begin{align*}
\PP_w \{ \tau_{B_\delta(y)^c} >\sigma_{2j+2} \} &= \E_w \E_w [\mathbf{1} \{ \tau_{B_\delta(y)^c}> \sigma_{2j+2} \} | \mathcal{F}_{\sigma_{2j+1}}]\\
&= \E_w \mathbf{1} \{ \tau_{B_\delta(y)^c}  > \sigma_{2j+1} \} \E_{x_{\sigma_{2j+1}}} \mathbf{1} \{ \tau_{B_\delta(y)^c} > t_* \}  \\
& \leq (1-\alpha) \PP_w\{ \tau_{B_\delta(y)^c}  > \sigma_{2j} \}.\end{align*}
Thus, by induction, $\PP_w \{ \tau_{B_\delta(y)^c} >\sigma_{2j} \} \leq (1-\alpha)^j$ for all $j\geq 0$. Finally, the Borel-Cantelli Lemma  implies $\PP_w \{ \tau_{B_{\delta}(y)^c} < \infty \} =1$ for all $w\in \X$, and therefore $y$ is recurrent.         
\end{proof}

We also have the following corollary of \cref{prop:dicho}.  

\begin{Corollary}
\label{cor:infof}
Suppose $x_t$ is nice.  If $x_t$ is recurrent, then for any $x, y\in \X$ and any open set $U_y\subset \X$ containing $y$:
\begin{align*}
\PP_x \{ \omega \, : \, \exists \, s_j(\omega) \in (0, \infty) \uparrow \infty \text{ for which } x_{s_j}\in U_y \text{ for all } j\} = 1.   
\end{align*}
\end{Corollary}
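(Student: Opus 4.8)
The plan is to exhibit explicitly a random sequence $s_1 < s_2 < \cdots$ with $s_j \to \infty$ and $x_{s_j} \in U_y$, valid $\PP_x$-almost surely. Since $x_t$ is recurrent, \cref{prop:dicho} shows every point of $\X$ is recurrent, in particular $y$. As $\X \cap U_y$ is open and contains $y$, fix $\delta > 0$ with $\overline{B_{2\delta}(y)} \subset \X \cap U_y$; then $\overline{B_\delta(y)} \subset B_{2\delta}(y) \subset U_y$ and $\X \setminus B_\delta(y) \neq \emptyset$. Under $\PP_x$, define stopping times by $S_1 = \inf\{t \geq 0 : x_t \notin B_{2\delta}(y)\}$ and, inductively for $k \geq 1$,
\begin{align*}
T_k = \inf\{t \geq S_k : x_t \in \overline{B_\delta(y)}\}, \qquad S_{k+1} = \inf\{t \geq T_k : x_t \notin B_{2\delta}(y)\}.
\end{align*}
These are stopping times by standard arguments, and the strong Markov property of $x_t$ applies at each. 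Granting that each $S_k, T_k$ is $\PP_x$-a.s.\ finite and that $T_k \uparrow \infty$, the choice $s_j := T_j$ finishes the proof, since $x_{T_j} \in \overline{B_\delta(y)} \subset U_y$.

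Finiteness I would prove by induction on $k$. At time $T_{k-1}$ (or at $t=0$ for $S_1$) the process sits in $\overline{B_\delta(y)} \subset \overline{B_{2\delta}(y)} \subset \X$ (resp.\ at $x \in \X$), so by the strong Markov property and \cref{prop:logexit} with $U = B_{2\delta}(y)$ — which gives $\tau_{\overline{B_{2\delta}(y)}} < \infty$ a.s.\ from any point of $\overline{B_{2\delta}(y)}$, while the exit time from the open ball is dominated by $\tau_{\overline{B_{2\delta}(y)}}$ — one gets $S_k < \infty$ a.s. By path continuity the exit $S_k$ occurs on the sphere, i.e.\ $x_{S_k} \in \partial B_{2\delta}(y) \subset \X \setminus B_\delta(y)$; the strong Markov property at $S_k$ and recurrence of $y$ then force the process to re-enter $B_\delta(y)$, hence $\overline{B_\delta(y)}$, in finite time, so $T_k < \infty$ a.s. Strict monotonicity $0 < T_1 < S_2 < T_2 < \cdots$ follows from continuity: $|x_{S_k} - y| = 2\delta$ puts $x_{S_k}$ outside $\overline{B_\delta(y)}$, so $T_k > S_k$, while $x_{T_k} \in \overline{B_\delta(y)} \subset B_{2\delta}(y)$ gives $S_{k+1} > T_k$, and examining the two cases $x \in B_{2\delta}(y)$ / $x \notin B_{2\delta}(y)$ yields $T_1 > 0$. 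Finally, if $T_k \uparrow T_\infty < \infty$ then also $S_k \to T_\infty$, so by continuity $x_{S_k} \to x_{T_\infty}$ and $x_{T_k} \to x_{T_\infty}$, yet $|x_{S_k} - y| = 2\delta$ and $|x_{T_k} - y| \leq \delta$, which is contradictory since $\delta > 0$; hence $T_k \to \infty$.

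I expect the only delicate point to be the bookkeeping in the finiteness step, and conceptually the interplay of the two inputs: recurrence alone only gives a return to $U_y$ in finite time, and it is \cref{prop:logexit} (the process leaves $B_{2\delta}(y)$ in finite time) together with the fixed gap $|x_{S_k} - y| = 2\delta$ that forces the successive return times $T_k$ to diverge. One must also be careful that \cref{prop:logexit} and the recurrence hypothesis are always invoked at points genuinely lying in $\overline{B_{2\delta}(y)}$, respectively in $\X \setminus B_\delta(y)$, which is exactly what the inclusions $\overline{B_\delta(y)} \subset B_{2\delta}(y) \subset \X$ and the identification of $x_{S_k}$ with a point of the sphere $\partial B_{2\delta}(y)$ guarantee; everything else is a routine strong-Markov induction combined with path continuity.
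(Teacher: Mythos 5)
Your proof is correct, and it takes a route that differs in its mechanics from the paper's. The paper also runs a cycle construction, but it alternates between $B_\delta(y)$ and a ball $B_\delta(z)$ around an auxiliary point $z$ disjoint from $B_\delta(y)$: each odd stopping time is a return to $B_\delta(y)$ and each even one is a visit to $B_\delta(z)$ required to occur at time at least the previous time plus $1$. Finiteness of every leg then follows from recurrence alone (of $y$ and of $z$, each invoked from a starting point outside the target ball), and divergence of the return times $s_j=\sigma_{2j+1}$ is immediate from the built-in increment of $1$ per cycle. You instead work entirely inside an annulus around $y$: exits from $B_{2\delta}(y)$ are handled by \cref{prop:logexit} rather than recurrence, re-entries into $\overline{B_\delta(y)}$ by recurrence, and divergence comes from path continuity together with the fixed spatial gap between $\partial B_{2\delta}(y)$ and $\overline{B_\delta(y)}$ (your contradiction argument for $T_k\uparrow T_\infty<\infty$ is sound, since the exit from the open ball starting strictly inside lands on the sphere). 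What the paper's version buys is that divergence is trivial and only recurrence is needed; what yours buys is that no auxiliary point $z$ is introduced, the limit points $x_{s_j}$ land cleanly in $U_y$ because you arranged $\overline{B_{2\delta}(y)}\subset U_y$ from the start, and every application of recurrence is made from a point verifiably outside $B_\delta(y)$ (namely on $\partial B_{2\delta}(y)$), a hypothesis the definition of recurrence genuinely requires and which your construction enforces explicitly. Both arguments are complete.
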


\begin{Remark}
\cref{cor:infof} states that if $x_t$ is recurrent, then for all $x\in \X$, almost surely the process started from $x$ visits  infinitely often any neighborhood of any $y \in \X$.   
\end{Remark}

\begin{proof}[Proof of \cref{cor:infof}]
Let $x, y\in \X$ and $U_y\subset \X$ be open with $y\in U_y$.  Fix $z\in \X$ with $z\neq x$ and $z\neq y$.  Choose $\delta >0$ such that $B_\delta(y)\subset U_y$, $B_\delta(z)\cap B_\delta(x)=\emptyset$, $B_\delta(z) \cap B_\delta(y) = \emptyset$ and $B_\delta(z) \subset \X$.  
Define stopping times $\sigma_i, i=0,1, \ldots$ inductively by
\begin{align*}
\sigma_0&=0;\\
\sigma_1&= \inf\{ t\geq \sigma_0 \,  : \, x_t\in B_\delta(y) \}\\
\sigma_2&= \inf\{ t\geq \sigma_1 +1 \, : \, x_t \in B_\delta(z) \} \\
\vdots & \qquad \vdots\\
\sigma_{2k+1}&= \inf\{ t\geq \sigma_{2k} \, : \, x_t \in B_\delta(y) \} \\
\sigma_{2k+2}&= \inf\{ t \geq \sigma_{2k+1}+1 \, : \, x_t \in B_\delta(z) \}.   
\end{align*}      
By recurrence, $\sigma_k$, $k \geq 0$ is almost surely finite.  The result follows by setting $s_j=\sigma_{2j+1}$.   
\end{proof}

The next proposition further explores implications of transience for a nice diffusion $x_t$.  

\begin{Proposition}\label{prop:trfin}
Suppose $x_t$ is nice and transient.  For any $x,y\in \X$ there exists $\delta >0$ small enough such that $\overline{B_\delta(y)} \subset \X$ and 
\begin{align}\label{eqn:dfws}
\PP_x\{\omega \,: \, \exists \, t_0(\omega) \in [0, \infty) \text{ such that }x_t(\omega) \notin B_\delta(y) \,\, \forall t\geq t_0(\omega) \} =1. 
\end{align}   
\end{Proposition}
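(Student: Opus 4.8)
The plan is to run a Khasminskii-type cycle argument. Since $x_t$ is transient, \cref{prop:dicho} forces every point of $\X$ to be transient; unpacking the definition (and shrinking radii so that the balls below sit inside $\X$) produces a $\delta>0$ with $\overline{B_{4\delta}(y)}\subset\X$ and a point $z_1\notin\overline{B_{3\delta}(y)}$ such that, writing
$$h(w):=\PP_w\{x_t\notin\overline{B_\delta(y)}\text{ for all }t\ge 0\},\qquad w\in W:=\X\setminus\overline{B_\delta(y)},$$
we have $h(z_1)>0$. I claim this $\delta$ works. Applying \cref{prop:exitprob}(ii) on the open set $W$ (the parabolic H\"ormander condition on $\X$ restricts to $W$) gives $h\in C^\infty(W)$, $0\le h\le 1$ and $Lh=0$ on $W$; by continuity fix $\epsilon>0$ with $\overline{B_\epsilon(z_1)}\subset W$ and $h\ge\gamma>0$ on $\overline{B_\epsilon(z_1)}$.

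Next I would set up cycles: let $\sigma_0:=0$ and, for $k\ge 1$,
$$\eta_k:=\inf\{t\ge\sigma_{k-1}:x_t\in\overline{B_\delta(y)}\},\qquad \sigma_k:=\inf\{t\ge\eta_k:x_t\notin B_{3\delta}(y)\}.$$
By \cref{prop:logexit} (exit from bounded sets in logarithmic time) and non-explosion, $\sigma_k<\infty$ a.s.\ on $\{\eta_k<\infty\}$, with $x_{\sigma_k}\in\partial B_{3\delta}(y)$; a short path-continuity argument then shows $\{x_t\in B_\delta(y)\text{ at arbitrarily large }t\}\subseteq\{\eta_k<\infty\ \forall k\}$ up to a $\PP_x$-null set, so it suffices to prove $\PP_x\{\eta_k<\infty\ \forall k\}=0$. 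Using the strong Markov property at each $\sigma_k$, this follows once one establishes the key estimate
$$\rho:=\sup_{w\in\overline{B_\delta(y)}}\PP_w\{x_t\text{ re-enters }\overline{B_\delta(y)}\text{ after first leaving }B_{3\delta}(y)\}<1,$$
since then $\PP_x\{\eta_{k+1}<\infty\}\le\rho\,\PP_x\{\eta_k<\infty\}\to 0$; the desired event $\{x_t\notin B_\delta(y)\text{ eventually}\}$ is the complement of $\{x_t\in B_\delta(y)\text{ at arbitrarily large }t\}$. A general initial point $x\in\X$ reduces to $x\notin\overline{B_{3\delta}(y)}$ by first running to $\sigma_1$.

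The heart of the proof is this key estimate, and I expect it to be the main obstacle. Letting $\zeta:=\inf\{t\ge0:x_t\notin B_{3\delta}(y)\}$, the strong Markov property at $\zeta$ gives $1-\rho=\inf_{w\in\overline{B_\delta(y)}}\E_w[h(x_\zeta)]$ with $x_\zeta\in\partial B_{3\delta}(y)$, so one must bound $\E_w[h(x_\zeta)]$ below uniformly in $w$. I would argue: (a) uniformly in $w\in\overline{B_\delta(y)}$, the exit position $x_\zeta$ lands with probability at least $\alpha>0$ in the relatively open, nonempty piece $G\subseteq\partial B_{3\delta}(y)$ lying in the same connected component of $W$ as $z_1$; and (b) $h\ge c>0$ on a neighborhood of some point of $G$ (chaining the bound $h\ge\gamma$ on $\overline{B_\epsilon(z_1)}$ with the sub-Markov property of the process killed on $\overline{B_\delta(y)}$). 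Together these yield $\E_w[h(x_\zeta)]\ge\alpha c>0$, hence $\rho<1$. Nonemptiness of $G$ follows from irreducibility (\cref{def:trav}): any continuous path from $z_1$ into $B_\delta(y)$ crosses $\partial B_{3\delta}(y)$ before touching $\overline{B_\delta(y)}$, so its first crossing point lies in $z_1$'s component of $W$. Statements (a) and (b) then rest on the interior smoothness and --- crucially --- strict positivity of the transition density of the process killed on $\overline{B_\delta(y)}$ (from \cref{thm:kol} together with irreducibility and the parabolic H\"ormander condition, via a support-type argument), combined with continuity of harmonic measure and compactness of $\overline{B_\delta(y)}$ and $\partial B_{3\delta}(y)$; assembling these uniform positivity estimates is the technical core of the argument.
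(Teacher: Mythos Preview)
Your cycle architecture is sound and close in spirit to the paper's argument, but the ``key estimate'' $\rho<1$ has a real gap.  First, (a) and (b) as you state them do not combine to give $\E_w[h(x_\zeta)]\ge \alpha c$: part (a) only says the exit measure puts mass $\ge\alpha$ on the whole piece $G\subseteq\partial B_{3\delta}(y)$, while (b) only gives $h\ge c$ on a neighborhood $N$ of \emph{one} point of $G$.  You would need the exit measure (from $B_{3\delta}(y)$, starting in $\overline{B_\delta(y)}$) to charge $N\cap\partial B_{3\delta}(y)$ uniformly in $w$, and nothing in the paper controls the harmonic measure of a hypoelliptic diffusion on such a set.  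Second, even (b) by itself is not available from the paper's results: \cref{thm:kol} gives smoothness of the killed density but not positivity, and Bony's inequality (\cref{thm:harnack}) bounds $\sup_K|D^\alpha u|$ above by values of $u$ at finitely many points of a dense set, which yields neither a strong maximum principle for $h$ nor $h>0$ on $\partial B_{3\delta}(y)$.  Your appeal to ``strict positivity of the killed transition density via a support-type argument'' is precisely what would be needed, but it is not proved in the paper, and in the hypoelliptic setting irreducibility of the process on $\X$ (\cref{def:trav}) does not automatically transfer to irreducibility of the process killed on $\overline{B_\delta(y)}$.

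The paper circumvents this entirely by replacing your exit step with a \emph{fixed-time} step for the unkilled process.  Concretely: pick $z$ and $\epsilon>0$ with $\PP_w\{\tau_{B_{\delta_1}(y)^c}=\infty\}\ge\alpha$ for all $w\in B_\epsilon(z)$ (continuity from \cref{prop:exitprob}), then use irreducibility in $\X$ and continuity of $w\mapsto P_{t_*}(w,B_{\epsilon/2}(z))$ (from \cref{thm:kol} on $\X$, not on $W$) to get $P_{t_*}(w,B_{\epsilon/2}(z))\ge a>0$ for all $w\in\overline{B_\delta(y)}$ once $\delta$ is small.  The cycle then alternates between hitting $B_\delta(y)$ and waiting time $t_*$; on the event $E_\delta(x)$ of infinitely many visits to $B_\delta(y)$, one reaches $B_{\epsilon/2}(z)$ almost surely, and from there the return probability to $B_\delta(y)\subset B_{\delta_1}(y)$ is at most $1-\alpha$, giving the geometric decay and the contradiction.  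The point is that the transition from $B_\delta(y)$ to $B_{\epsilon/2}(z)$ is allowed to pass through $\overline{B_\delta(y)}$, so only the assumed irreducibility on $\X$ is used.  You can salvage your outline simply by replacing the pair $(\sigma_k,\text{exit sphere})$ with $(\eta_k+t_*,\text{land in }B_{\epsilon/2}(z))$.
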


\begin{proof}
Since $x_t$ is transient, there exist $\delta_1 >0$, $y \in \X$, and $z\in \X \setminus B_{2\delta_1}(y)$ such that 
\begin{align}
\label{eqn:comeback}
\PP_z\{ \tau_{B_{\delta_1}(y)^c}= \infty \}=: 2\alpha >0.  
\end{align}
By \cref{prop:exitprob}, there exists $\epsilon >0$ such that $\overline{B_\epsilon(z)}\subset \X$ and $$\PP_w\{ \tau_{B_{\delta_1}(y)^c}=\infty \}\geq \alpha$$ for all $w\in B_\epsilon(z).$ 
In order to obtain a contradiction, 
for every $\delta >0$ define 
\begin{equation}
E_\delta(x) :=  \{\omega \, : \, \exists \, s_j(\omega) \in (0, \infty) \uparrow \infty \text{ such that } x_{s_j}(\omega) \in B_{\delta}(y), x_0 = x \}
\end{equation}
and assume there is $x \in \X$ such that  for each $\delta > 0$
\begin{align}
\label{eqn:chyp}
\PP\{E_\delta(x) \}>0.  
\end{align}
Note that for each $t \geq 0$, one has $E_\delta(x) = E_\delta(x_t(x))$, where $x_t(x)$ is the process with $x_0(x)=x$. 
Next, we claim that, on the set $E_\delta(x)$ for all $\delta >0$ small enough, the process $x_t$ almost surely enters $\overline{B_{\epsilon/2}(z)} $.  

To prove the claim, first observe that since $\X$ is irreducible, 
$$
\PP_y \{ \tau_{B_{\epsilon/2}(z)^c}< \infty \}>0.
$$ 
Thus, by path continuity (cf. the proof of \cref{prop:dicho}) there exists $t_*>0$ such that $P_{t_*}(y, B_{\epsilon/2}(z)) =:2a >0$. 
Since $w\mapsto P_{t_*}(w, B_{\epsilon/2}(z))$ is continuous, there exists $\delta \in (0, \delta_1)$ such that  $\overline{B_\delta(y)}\subset \X$ and $P_{t_*}(w, B_{\epsilon/2}(z)) \geq a$ 
for all $w\in \overline{B_\delta(y)}$.  Fix such $\delta  \in (0, \delta_1)$ and inductively define stopping times $\zeta_i$, $i=1,2, \ldots$, by 
\begin{align*}
\zeta_1&= \inf\{ t\geq 0 \, : \, x_t \in B_\delta(y) \}\,,\\
\zeta_2&= \zeta_1 + t_* \,,\\
\vdots & \qquad \vdots\\
\zeta_{2k+1}&= \inf\{ t\geq \zeta_{2k} \, : \, x_t \in B_\delta(y)\} \,,\\
\zeta_{2k+2}&= \zeta_{2k+1} + t_*.  
\end{align*}  
By the definition of $E_\delta(x)$, 
the stopping times $\zeta_i$ are almost surely finite on $E_\delta(x)$.  Then, the strong Markov property (cf. the proof of \cref{prop:dicho}) yields
\begin{align}\label{eqn:asff}
\PP_x\{ \tau_{B_{\epsilon/2}(z)^c} >\zeta_{2j} \, |\,  E_\delta(x) \} \leq (1-a)^j   
\end{align}    
for any $j$.  Thus the  
Borel-Cantelli lemma implies that $\PP_x\{ \tau_{B_{\epsilon/2}(z)^c} < \infty \,|\, E_\delta(x) \} = 1$, establishing the claim.

 Next, define stopping times $ \sigma_i'$, 
$i = 1, 2, \ldots,$ by
\begin{align*}
\sigma_1'&= \inf\{ t\geq 0 \, : \, x_t \in   B_\delta(y) \} \,, \\
\sigma_2'&= \inf \{ t\geq \sigma_1' \, : \, x_t \in B_{\epsilon/2}(z) \} \,, \\
\vdots & \qquad \vdots \\
\sigma_{2k+1}' &= \inf \{ t\geq \sigma_{2k}' \, : \, x_t \in   B_\delta(y)\} \,, \\
\sigma_{2k+2}' &= \inf \{ t\geq \sigma_{2k+1}' \, : \, x_t \in B_{\epsilon/2}(z)\}.   
\end{align*}
By a similar argument to the one used above, it also follows that on the event $E_\delta(x)$, $\sigma_j'$ is finite almost surely for all $j\geq 1$.  
Observe that $\PP (\sigma_j' < \infty) \geq \PP (E_\delta(x)) \geq c$, where $c$ is independent of $j$. 
However, by the strong Markov property and iteration, if $j\geq 2$ we have
\begin{align*}
\PP_x \{ \sigma_{2j}' < \infty\} &= \E_x \E_x[ \mathbf{1} \{ \sigma_{2j}' < \infty \} \, | \, \mathcal{F}_{\sigma_{2j-2}'} ]\\
&= \E_x \mathbf{1} \{ \sigma_{2j-2}' < \infty \} \PP_{x_{\sigma_{2j-2}'}} \{ \sigma_2' < \infty \} \leq (1-\alpha) \PP_x\{\sigma_{2j-2}'  < \infty \}
  \leq (1-\alpha)^{j-1}.
\end{align*}   
Thus Borel-Cantelli implies that $\sigma_{2j}' < \infty$ for only finitely many $j$, a contradiction.    
\end{proof}

As an immediate consequence, we have the following corollary.

\begin{Corollary}
\label{cor:trans1}
If $x_t$ is transient, then for any compact set $K\subset \X$ and $x\in \X$
\begin{align*}
\PP_x\{ \omega \, : \, \exists \, t_0^K(\omega) \in [0, \infty) \text{ such that } x_t(\omega) \notin K \,\, \forall t\geq t_0^K(\omega) \} =1 
\end{align*} 
and 
\begin{align*}
\lim_{t \to \infty} \PP_x\{x_t \in K\} = 0 \,.
\end{align*}  
\end{Corollary}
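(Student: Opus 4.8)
The plan is to reduce \cref{cor:trans1} to \cref{prop:trfin} by a compactness argument, and then to pass from the almost-sure escape statement to the limit of probabilities via bounded convergence. First, fix $x\in\X$ and the compact set $K\subset\X$. For each $y\in K$, I would apply \cref{prop:trfin} to the pair $(x,y)$ to obtain a radius $\delta(y)>0$ with $\overline{B_{\delta(y)}(y)}\subset\X$ and
\[
\PP_x(A_y)=1,\qquad A_y:=\{\omega:\exists\,t_0^y(\omega)\in[0,\infty)\text{ with }x_t(\omega)\notin B_{\delta(y)}(y)\ \forall\, t\ge t_0^y(\omega)\}.
\]
Note that $A_y$ is monotone under shrinking the ball, so there is no loss in taking $\delta(y)$ as small as we like. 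Since each $y$ lies in its own ball $B_{\delta(y)}(y)$, the family $\{B_{\delta(y)}(y):y\in K\}$ is an open cover of $K$; by compactness I extract a finite subcover, i.e.\ points $y_1,\dots,y_n\in K$ with $K\subset\bigcup_{i=1}^n B_{\delta(y_i)}(y_i)$.

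Next, set $A:=\bigcap_{i=1}^n A_{y_i}$, so that $\PP_x(A)=1$. On $A$, for each $\omega$ define $t_0^K(\omega):=\max_{1\le i\le n}t_0^{y_i}(\omega)$; then for every $t\ge t_0^K(\omega)$ one has $x_t(\omega)\notin B_{\delta(y_i)}(y_i)$ for all $i$, hence $x_t(\omega)\notin\bigcup_{i=1}^n B_{\delta(y_i)}(y_i)\supseteq K$. This proves the first displayed assertion. For the second, observe that on the full-probability event $A$ we have $\mathbf{1}\{x_t\in K\}=0$ for all $t\ge t_0^K(\omega)$, so $\mathbf{1}\{x_t\in K\}\to 0$ pointwise on $A$ as $t\to\infty$. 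Taking any sequence $t_k\to\infty$, the bounded convergence theorem gives $\PP_x\{x_{t_k}\in K\}=\E_x\mathbf{1}\{x_{t_k}\in K\}\to 0$; since the sequence is arbitrary, $\lim_{t\to\infty}\PP_x\{x_t\in K\}=0$.

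There is essentially no serious obstacle here: the only point requiring a moment's care is that the radii $\delta(y)$ furnished by \cref{prop:trfin} may be very small and $y$-dependent, so one cannot cover $K$ by balls of a single fixed radius; the remedy is simply to use the balls $B_{\delta(y)}(y)$ themselves, each of which contains its own center, so that they still form an open cover amenable to the compactness argument. Everything else — intersecting finitely many almost-sure events, and the bounded convergence step — is routine.
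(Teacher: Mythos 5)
Your proof is correct and follows essentially the same route as the paper: apply \cref{prop:trfin} at each $y\in K$, extract a finite subcover by compactness, take the maximum of the finitely many a.s.\ finite escape times, and then pass to the limit of probabilities (the paper uses the inequality $\PP_x\{x_t\in K\}\leq\PP_x\{t_0^K>t\}$ where you invoke bounded convergence, but these are the same observation).
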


\begin{proof}
Fix $x\in \X$ and 
for any $y \in K$ fix $\delta_y > 0$ and $t_0(\omega)= t_{0,y}(\omega)$ such that the conclusion of~\cref{prop:trfin} holds true. From the open cover $\{B_{\delta(y)}(y)\}_{y \in K}$ of $K$ choose a finite subcover and 
define $t_0^K$ to be 
the maximum of $t_{0,y}$ in this finite subcover.  
For the second conclusion, we note that 
$$
 \PP_x \{ x_t \in K \} \leq \PP_x\{ t_0^K(\omega) > t \}\rightarrow 0
$$ as $t\rightarrow \infty$, where $t_0^K$ is as in the first assertion.    
\end{proof}

\subsection{Invariant measures}

A central interest in the theory of stochastic differential equations is the large-time behavior of the process $x_t$.  In particular, we are interested in the relationship between 
recurrence, transience  and the existence of \emph{invariant measures}.  Such measures are the random analogues of equilibrium points of deterministic ordinary differential equations.

To introduce invariant measures, throughout this section we again assume $x_t$ is a nice diffusion.  In particular, $x_t$ is non-explosive process on $\X$, 
and consequently the process $x_t$ is Markov with Markov semigroup $(\mathcal{P}_t)_{t\geq 0}$.  Recalling that $\mathcal{B}$ denotes the Borel sigma algebra 
of subsets of $\X$, we call a positive, $\mathcal{B}$-measure $\mu$ an \emph{invariant measure}, 
if $\mu \mathcal{P}_t= \mu$ for all $t\geq 0$, where $\mu \mathcal{P}_t$ was defined in~\eqref{eqn:semim}.  An invariant measure $\mu$ with $\mu(\X)=1$ is called an \emph{invariant probability measure}.      

\begin{Remark}
Observe that if $\mu$ is an invariant probability measure, the equality $\mu \mathcal{P}_t=\mu$ for all $t\geq 0$ means that the process $x_t$ with 
initial distribution $\mu$ has the distribution $\mu$ for all times $t\geq 0$.  In other words, the statistics remain invariant under the dynamics.    
\end{Remark}

\begin{Remark}
It is common in the literature to implicitly assume that 
 an invariant measure is an invariant probability measure.  However, below we need to distinguish between invariant measures which are probability distributions and those which are not.  
\end{Remark}

We first show that a nice diffusion $x_t$ which is transient cannot have an invariant probability measure.  

\begin{Corollary}
\label{cor:noinv}
If $x_t$ is  nice diffusion which is transient, then $x_t$ cannot have an invariant probability measure.     
\end{Corollary}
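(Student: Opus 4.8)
The plan is to argue by contradiction, exploiting the escape-to-infinity property of a transient nice diffusion recorded in \cref{cor:trans1}. Suppose $\mu$ is an invariant probability measure for $x_t$, so that $\mu \mathcal{P}_t = \mu$ for all $t \geq 0$. Fix any compact set $K \subset \X$. By invariance and the definition of $\mu \mathcal{P}_t$ in~\eqref{eqn:semim},
\begin{align*}
\mu(K) = \mu \mathcal{P}_t(K) = \int_\X \mathcal{P}_t(x,K) \, \mu(dx) = \int_\X \PP_x\{ x_t \in K \} \, \mu(dx)
\end{align*}
for every $t \geq 0$.

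Next I would pass to the limit $t \to \infty$ inside the integral. Since $x_t$ is nice and transient, \cref{cor:trans1} gives $\PP_x\{ x_t \in K \} \to 0$ as $t \to \infty$ for every $x \in \X$. Because the integrand is bounded by $1$ and $\mu$ is a finite (indeed probability) measure, the Dominated Convergence Theorem applies and yields $\mu(K) = 0$. As $K \subset \X$ was an arbitrary compact set, we conclude that $\mu$ assigns zero mass to every compact subset of $\X$.

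Finally I would exhaust $\X$ by the bounded open sets $\X_n$ with $\overline{\X_n} \subset \X_{n+1}$ and $\bigcup_{n} \X_n = \X$. Each $\overline{\X_n}$ is compact in $\X$, so $\mu(\overline{\X_n}) = 0$ for all $n$, and by continuity of measure from below,
\begin{align*}
\mu(\X) = \lim_{n \to \infty} \mu(\X_n) \leq \lim_{n \to \infty} \mu(\overline{\X_n}) = 0,
\end{align*}
contradicting $\mu(\X) = 1$. Hence no invariant probability measure can exist.

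There is no genuine obstacle here: the entire content is packaged in \cref{cor:trans1}, and the only point requiring the slightest care is justifying the interchange of limit and integral, which is immediate from the uniform bound $\PP_x\{x_t \in K\} \leq 1$ and finiteness of $\mu$. If one wanted, the same argument shows more generally that a transient nice diffusion admits no \emph{finite} invariant measure, simply by not normalizing; I would mention this only if it is needed later.
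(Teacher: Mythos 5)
Your argument is correct and follows essentially the same route as the paper: both apply \cref{cor:trans1} together with bounded/dominated convergence to force $\mu(K)=0$ for compact $K$, then derive a contradiction with $\mu(\X)=1$. The only cosmetic difference is that the paper fixes a single compact $K$ with $\mu(K)>0$ at the outset, whereas you show $\mu$ vanishes on all compacts and then exhaust $\X$; these are equivalent.
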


\begin{proof}
Suppose to the contrary that there exists an invariant probability measure $\mu$.   Then, there exists a compact set $K\subset \RR^\mathfrak{m}$ with $K\subset \X$ and $\mu(K)>0$.  Since $\mu$ is invariant, 
\begin{align*}
0<\mu(K) = \int_{\X} \mu(dx) P_t(x, K) \qquad \text{ for all } t\geq 0.   
\end{align*}   
Using the Bounded Convergence Theorem and  \cref{cor:trans1}, it follows that $$\int_{\X} \mu(dx) P_t(x, K) \rightarrow 0$$ as $t\rightarrow \infty$.  
Hence, $\mu(K)=0$, a contradiction.  
\end{proof}

On the other hand, when $x_t$ is recurrent, one can always construct a $\sigma$-finite invariant measure using an embedded Markov chain via cycles.  
We provide details below,  but first we prove an auxiliary result that allows us to further categorize recurrence.  

\begin{Proposition}
\label{prop:posrec}
Let $x_t$ be a nice diffusion.  If $U\subset \X$ is a bounded, non-empty, open set with $\overline{U} \subset \X$ and $\E_x \tau_{U^c}<\infty$ for all $x\in \X$, then:
\begin{itemize}
\item[(i)]  $x_t$ is recurrent;
\item[(ii)] $\E_x \tau_{V^c}< \infty$ for all $x\in \X$ and 
any non-empty, open set $V\subset \X$ with $\overline{V}\subset \X$   
\end{itemize}

\end{Proposition}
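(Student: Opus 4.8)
The plan is to deduce (i) from the dichotomy \cref{prop:dicho} and then to bootstrap from (i) to (ii) by a renewal-type cycle construction.

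\textbf{Part (i).} Since $\E_x\tau_{U^c}<\infty$ for every $x$, we have $\PP_x\{\tau_{U^c}<\infty\}=1$ for all $x\in\X$, i.e. $x_t$ hits $U$ almost surely from every starting point. By \cref{prop:dicho} it is enough to exhibit one recurrent point, so I would fix $x_0\in\X\setminus\overline U$ (nonempty, since $\overline U$ is bounded while $\X$ is open) and a small $\delta>0$ with $\overline{B_\delta(x_0)}\subset\X$; as hitting a larger ball occurs sooner, it suffices to prove $\PP_y\{\tau_{B_\delta(x_0)^c}<\infty\}=1$ for every $y\in\X$. Exactly as in the proof of \cref{prop:dicho}, irreducibility (\cref{def:trav}) and path continuity give, for each $w\in\overline U$, a time $t_w>0$ with $P_{t_w}(w,B_{\delta/2}(x_0))>0$, while $w'\mapsto P_{t_w}(w',B_{\delta/2}(x_0))$ is continuous by \cref{rem:extn} and \cref{thm:kol}; compactness of $\overline U$ then furnishes finitely many times $t_1,\dots,t_N>0$ and $a>0$ so that every $w\in\overline U$ admits some $i$ with $P_{t_i}(w,B_{\delta/2}(x_0))\ge a$. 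Put $T=1+\max_i t_i$. Starting from $y$, set $\theta_0=0$ and $\theta_{k+1}=\inf\{t\ge\theta_k+T:\ x_t\in U\}$, which are finite a.s. by the Markov property and the hypothesis; at $\theta_k$ the process lies in $\overline U$, so one may choose (measurably) $i_k$ with $P_{t_{i_k}}(x_{\theta_k},B_{\delta/2}(x_0))\ge a$ and set $\xi_k=\theta_k+t_{i_k}<\theta_{k+1}$. The events $A_j=\{x_{\xi_j}\in B_{\delta/2}(x_0)\}$ satisfy $A_1,\dots,A_{k-1}\in\mathcal F_{\theta_k}$ and, by the strong Markov property, $\PP\{A_k\mid\mathcal F_{\theta_k}\}\ge a$, whence $\PP_y(\bigcap_{j\le k}A_j^c)\le(1-a)^k\to0$ and $\tau_{B_\delta(x_0)^c}<\infty$ a.s. Thus $x_0$ is recurrent, and \cref{prop:dicho} yields (i).

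\textbf{Part (ii).} By monotonicity of hitting times it suffices to treat $V=B_\eta(v_0)$ with $\overline{B_\eta(v_0)}\subset\X$. From $\tau_{V^c}\le\tau_{U^c}+\tau_{V^c}\circ\theta_{\tau_{U^c}}$, $x_{\tau_{U^c}}\in\overline U$, and $\E_x\tau_{U^c}<\infty$, the problem reduces to $\sup_{w\in\overline U}\E_w\tau_{V^c}<\infty$. For this I would fix a bounded open neighbourhood $U^+$ of $\overline U$ with $\overline{U^+}\subset\X$ and run cycles: starting from a point of $\overline U$, first let $x_t$ leave $U^+$ — by \cref{prop:logexit} this takes bounded expected time, uniformly over $\overline{U^+}$, and places the process on the compact set $\partial U^+\subset\X\setminus\overline U$; from $\partial U^+$, just as in part (i) (irreducibility, \cref{thm:kol}, compactness of $\partial U^+$), there is a uniform $a>0$ and a bounded time $T$ such that $x_t$ reaches $B_{\eta/2}(v_0)$ within time $T$ with probability at least $a$; on failure, $x_t$ returns to $U$ in finite expected time by the hypothesis, and a new cycle begins. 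A geometric bound on the number of cycles (as in part (i)) together with a Wald-type summation then gives $\sup_{w\in\overline U}\E_w\tau_{V^c}<\infty$, which is (ii); note recurrence from part (i) guarantees all the stopping times involved are a.s. finite.

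\textbf{The main obstacle.} The delicate step in (ii) is the uniform bound on the expected length of a single cycle: the ``return to $U$'' stage of cycle $k$ starts at the random endpoint of the previous failed attempt, so one must bound $\E_z\tau_{U^c}$ uniformly as $z$ ranges over the compact sets produced at the cycles' ends, i.e. one needs $z\mapsto\E_z\tau_{U^c}$ to be locally bounded on $\X\setminus\overline U$. This is \emph{not} immediate from pointwise finiteness, since a priori such a hitting-time expectation is only lower semicontinuous. I would obtain local boundedness by combining the interior regularity afforded by hypoellipticity — via the arguments of \cref{prop:generalphi} and \cref{prop:expexit} applied to the exhausting domains $\X_n\setminus\overline U$, together with the Harnack inequality \cref{thm:harnack} to pass to the increasing limit — with control, via \cref{prop:logexit} and nonexplosivity \hyperlink{(NE)}{{\bf(NE)}} (and a Dini-type argument against the exhaustion $\{\X_n\}$), of the excursions of $x_t$ to spatial infinity. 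Establishing this regularity estimate is the technical heart of (ii); once it is in place, the cycle argument above closes. (One could alternatively route (ii) through an invariant-measure/cycle construction, but the present approach stays self-contained.)
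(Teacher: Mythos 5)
Your proposal is correct and follows essentially the same route as the paper: a cycle/Borel--Cantelli construction for (i), and for (ii) a Wald-type cycle estimate whose cycle length is controlled via \cref{prop:logexit} together with local boundedness of $z\mapsto\E_z\tau_{\overline{U}^c}$. The ``main obstacle'' you flag is resolved in the paper exactly as you sketch, by applying \cref{prop:expexit}(ii) on the domain $\X\setminus\overline{U}$ (which internally rests on the Harnack inequality of \cref{thm:harnack}), giving smoothness of $z\mapsto\E_z\tau_{\overline{U}^c}$ and hence boundedness on compacts such as $\partial\X_{k^*}$.
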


\begin{Remark}
The argument is similar to previous cycle constructions, except that one has to control expected values rather than probabilities.  
\end{Remark}

\begin{proof}[Proof of  \cref{prop:posrec}]
We prove both conclusions simultaneously.  By making $V$ smaller, we may suppose without loss of generality that $V\subset \overline{V} \subset \X$ is an open ball.  
Fixing $x\in \X$, our goal is to show that $\E_x \tau_{\overline{V}^c}< \infty$.  

Let $\X_k \subset \X$ be an increasing sequence of bounded open sets with $\overline{\X_k} \subset \X_{k+1}$ and 
$\cup_k \X_k = \X$.  By compactness, there is $k_0 \in \N$ large enough so that $\X_k \supset \overline{U} \cup \overline{V}\cup\{x\}$ for each $k \geq k_0$.  
Since $x_t$ is nice, for each fixed $y \in \overline{U}$ there is 
$\ell = \ell(y) \geq k_0 + 1$ such that $u_\ell(y) := \PP_y\{ \tau_{\overline{V}^c}< \tau_{\X_{\ell}}\}  > 0$. 
Note that for each $k \geq k_0$ and $z \in Q := \X_k \setminus \overline{V}$ one has   
$\PP_z \{\tau_Q < \infty \} = 1$. Indeed, by \cref{prop:logexit},  $x_t$ almost surely leaves $\X_k$ in finite time.  
Setting $g=1$ on $\partial V$ and $g=0$ on $\partial \X_{\ell}$, we have 
$u_\ell(y) = \E_y g(x_{\tau_Q})$ and by  \cref{thm:dirichletrev},  $u_\ell(w)>0$ for all $w\in B_\epsilon(y)$ and some $\epsilon>0$.  Since $\ell \mapsto u_\ell(x)$ is non-decreasing for any 
$x \in \X_{k_0 + 1}$, the compactness implies the existence of 
 $k^* \geq k_0 + 1$ and $q>0$ such that 
\begin{align*}
 \inf_{y\in \overline{U}} \PP_y\{ \tau_{\overline{V}^c}< \tau_{\X_{k^*}}\} \geq q \,.
\end{align*}
Next, define stopping times $\eta_i$, $i=-2, 0, 1, 2\ldots $, as follows: $\eta_{-2}=0$ and
\begin{align*}
\eta_0&= \inf\{ t\geq 0   \, : \, x_t \in \overline{U} \}, \\
\eta_1&=\inf\{ t\geq \eta_0 \, : \, x_t\in \partial \X_{k^*} \},\\
\eta_2&= \inf\{ t\geq \eta_1   \, : \, x_t \in \overline{U} \},\\
\vdots &\qquad \vdots \\
\eta_{2i+1} &=  \inf\{ t\geq \eta_{2i}\, : \, x_t \in \partial \X_{k^*} \}, \\
\eta_{2i+2} &=\inf\{ t\geq \eta_{2i+1} \, : \, x_t \in \overline{U} \}.    
\end{align*}     
As in the proof of  \cref{prop:dicho}, using \cref{prop:logexit} one can show that each $\eta_j$ is almost surely finite and $\PP_x \{\tau_{\overline{V}^c} \geq \eta_{2N}\} \leq (1 - q)^{N-1}$. Then, by Borel-Cantelli,
$\tau_{\overline{V}^c}< \eta_{2N}$, for $\PP_x$-almost surely for some bounded random index $N$.  The conclusion in (i) follows as $x$ was arbitrary.  

To prove (ii), note that 
 \begin{align}
\label{eqn:pp1}
\E_x \tau_{\overline{V}^c} = \sum_{j=0}^\infty \E_x \tau_{\overline{V}^c} \mathbf{1}_{[\eta_{2(j-1)}, \eta_{2j})} (\tau_{\overline{V}^c})\leq    \sum_{j=0}^\infty \E_x \eta_{2j} \mathbf{1}_{[\eta_{2(j-1)}, \eta_{2j})} (\tau_{\overline{V}^c})\end{align}  
and if $E_\ell := \{ \omega \, : \, x_t \in \overline{V}\,\,  \textrm{ for some } \,\, t\in [\eta_{2(\ell-1)}, \eta_{2\ell}) \},$ then 
\begin{align}
\label{eqn:pp2}
\{\omega \, : \, \tau_{\overline{V}^c} \in [\eta_{2(j-1)}, \eta_{2j}) \} = \bigg[\bigcap_{\ell =1}^{j-1} E_\ell^c \bigg] \cap E_j.    
\end{align}  
Define $\alpha = \sup_{y\in \overline{U}} \E_y \eta_2$.  We next claim that $\alpha < \infty$.  
Indeed, for any $y\in \overline{U}$, we have $\eta_0 = 0$.  Moreover, by the strong Markov property,   
\begin{align*}
\E_y \eta_2 &=\E_y \eta_1+ \E_y \E_y [\eta_2 -\eta_1 | \mathcal{F}_{\eta_1}] = \E_y \eta_1 +\E_y \E_{x_{\eta_1}} \tau_{\overline{U}^c}  \leq \E_y \eta_1+ \sup_{z\in \partial \X_{k^*} }\E_z \tau_{\overline{U}^c}.      
\end{align*}
Applying \cref{prop:logexit} with $U = \X_{k^*}$, $\sup_{y\in \overline{U}} \eta_1 < \infty.$  Also, by assumption $\E_z \tau_{\overline{U}^c}< \infty$ for all $z\in \X$.  Applying  \cref{prop:expexit}(ii) (with $U$ 
replaced by $ \X \setminus \overline{U}$) and $x_t \in \X$ for each $t \geq 0$,  
yield that $z\mapsto \E_z \tau_{\overline{U}^c} $ is smooth on $\X \setminus \overline{U}$. In particular $z\mapsto \E_z \tau_{\overline{U}^c} $ is bounded on the compact set $\partial \X_{k^*}$, 
and the claim follows.      

In addition,  for our fixed $x \in \X_{k_*}$, one has $\E_x \eta_0 < \infty$, and consequently by \cref{prop:logexit} and the arguments above
\begin{align}\label{eqn:brca}
\beta := \E_x \eta_2 &=\E_x \eta_0+ \E_x [\eta_1 - \eta_0] + \E_x \E_x [\eta_2 -\eta_1 | \mathcal{F}_{\eta_1}] \\
&\leq \E_x \eta_0+ \sup_{y\in \overline{U}} \E_y \tau_{\X_{k_*}} + \sup_{y\in \partial \X_{k_*}} \E_y \tau_{\overline{U}^c} < \infty.  
\end{align}

We next claim  that  for $j\geq 2$ the following estimate holds:
\begin{align}\label{eqn:siin}
\E_x \eta_{2j} \mathbf{1}_{[\eta_{2(j-1)},\eta_{2j})  }(\tau_{\overline{V}^c}) \leq (1-q)^{j-2} \beta + (j-2) (1-q)^{j-2} \alpha + (1-q)^{j-1}\alpha.  
\end{align}       
We proceed by induction. If  $j=2$, then
\begin{align*}
\E_x \eta_4 \mathbf{1}_{[\eta_2, \eta_4)}(\tau_{\overline{V}^c}) &= \E_x \E_x [\eta_4 \mathbf{1}_{E_1^c \cap E_2 }| \mathcal{F}_{\eta_2} ] \leq \E_x \eta_2  + \E_x \mathbf{1}_{E_1^c} \E_{x_{\eta_2}} \eta_2 \leq \beta + (1-q) \alpha,   
\end{align*}
which establishes the base case of the induction argument.  Suppose \eqref{eqn:siin} holds for some $j\geq 2$.  Then,
\begin{align*}
\E_x \eta_{2(j+1)} \mathbf{1}_{[\eta_{2j},\eta_{2(j+1)}) }(\tau_{\overline{V}^c}) &= \E_x[ \E_x \eta_{2(j+1)} \mathbf{1}_{\bigcap_{\ell=1}^j E_\ell^c \cap E_{j+1}}| \mathcal{F}_{\eta_2}] \\
&= \E_x \eta_2 \PP_x\bigg(\bigcap_{\ell=1}^j E_\ell^c \cap E_{j+1} | \mathcal{F}_{\eta_2}\bigg)\\
& \qquad +\E_x[ \E_x (\eta_{2(j+1)}-\eta_2) \mathbf{1}_{\bigcap_{\ell=1}^j E_\ell^c \cap E_{j+1}}| \mathcal{F}_{\eta_2}]\\
& \leq \beta (1-q)^{j-1} + (1-q) \sup_{y\in \overline{U} } \E_y \eta_{2j} \mathbf{1}_{\bigcap_{\ell=1}^{j-1} E_i^c \cap E_{j}}\\
& \leq \beta (1-q)^{j-1}  + (j-1) (1-q)^{j-1} \alpha+ (1-q)^j \alpha. 
\end{align*}
This finishes the proof of \eqref{eqn:siin}.  Then, (ii) follows from \eqref{eqn:pp1}. 
\end{proof}

The previous result gives rise to the following definition.  
\begin{Definition}
Suppose $x_t$ is a nice diffusion and assume there exists a bounded, non-empty, open set $U\subset \X$ with $\overline{U} \subset \X$ such that 
$\E_y \tau_{U^c} < \infty$ for all $y \in \X$.  Then, we call $x_t$ \emph{positive recurrent}.  Otherwise, if $x_t$ is recurrent but not positive recurrent, we call $x_t$ \emph{null recurrent}.    
\end{Definition}

Note that by \cref{prop:posrec}, positive recurrence immediately implies recurrence. 
Next, we show that if $x_t$ is positive recurrent, it has an  invariant probability measure. 
The following result can be found in a number of references, see for example~\cite{Kliem_87, RB_06}.  We provide most of the details for completeness.

\begin{Proposition}
\label{prop:existinv}
Suppose $x_t$ is nice.  If $x_t$ is recurrent, then there exists a $\sigma$-finite invariant measure.  If $x_t$ is positive recurrent, then there exists an invariant probability measure.    
\end{Proposition}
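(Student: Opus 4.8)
The strategy is the classical Khasminskii cycle construction applied to an embedded (discrete-time) Markov chain built from successive returns to a fixed reference set. Fix a bounded, non-empty, open set $U_0 \subset \X$ with $\overline{U_0} \subset \X$ (which will be a small ball), and — because $x_t$ is recurrent (or positive recurrent, in which case $\E_x \tau_{U_0^c} < \infty$ by \cref{prop:posrec}) — the process started anywhere in $\X$ returns to $\overline{U_0}$ almost surely, and moreover the successive return times, defined via the alternating exit/entrance stopping times as in the proofs of \cref{prop:dicho} and \cref{prop:posrec}, are almost surely finite. The candidate invariant measure is
\begin{align}
\label{eqn:khasminv}
\mu(A) := \E_{\nu}\int_0^{\sigma} \mathbf{1}_A(x_s)\, ds, \qquad A\in \B,
\end{align}
where $\sigma$ is the length of one cycle (the first return time to $\overline{U_0}$ after first exiting a slightly larger bounded set) and $\nu$ is an appropriate initial distribution supported on the boundary of that larger set — concretely, $\nu$ should be chosen as a stationary distribution for the embedded return chain on that boundary, whose existence follows from the uniform lower bound $\inf_{y} \PP_y\{\cdots\} \geq q > 0$ of the type established in \cref{prop:posrec} together with a standard fixed-point/Krylov–Bogolyubov argument for the compact-state embedded chain.

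**Key steps, in order.** First I would carefully set up the cycle: let $\X_{k^*}$ be a bounded open set with $\overline{U_0} \subset \X_{k^*}$, $\overline{\X_{k^*}} \subset \X$, define $\eta_0 = \inf\{t \geq 0 : x_t \in \overline{U_0}\}$, $\eta_1 = \inf\{t \geq \eta_0 : x_t \in \partial\X_{k^*}\}$, $\eta_2 = \inf\{t \geq \eta_1 : x_t \in \overline{U_0}\}$, exactly as in \cref{prop:posrec}, and invoke recurrence (and \cref{prop:logexit}) to see all $\eta_i < \infty$ a.s. Second, I would construct the embedded chain $Z_n := x_{\eta_{2n+1}}$ on $\partial\X_{k^*}$ (a compact set), show its transition kernel is Feller — here \cref{lem:approx}(i) and continuity of the relevant hitting probabilities, via \cref{thm:kol} and \cref{prop:exitprob}, give the needed continuity — and hence by Krylov–Bogolyubov it admits an invariant probability measure $\nu$; in the positive recurrent case one additionally checks $\E_\nu \sigma < \infty$ using the bounds $\alpha = \sup_{y\in\overline{U_0}}\E_y\eta_2 < \infty$ and $\beta < \infty$ from the proof of \cref{prop:posrec}. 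Third, with $\mu$ defined by \eqref{eqn:khasminv}, I would verify $\mu \mathcal{P}_t = \mu$: this is the heart of the argument and uses the strong Markov property at cycle boundaries together with the fact that, for the time-shifted process, integrating $\mathcal{P}_t \mathbf{1}_A = \E_\cdot \mathbf{1}_A(x_t)$ against $\mu$ reproduces $\mu(A)$ because the renewal structure of cycles makes $\int_0^{\sigma} \mathbf{1}_A(x_{s+t})\,ds$ and $\int_0^\sigma \mathbf{1}_A(x_s)\,ds$ have the same expectation under the stationary $\nu$ — this is the classical "$\mu$ is shift-invariant" computation, carried out by writing $\int_t^{\sigma+t} = \int_0^\sigma + \int_\sigma^{\sigma+t} - \int_0^t$ and using that the two tail integrals cancel in expectation by stationarity of $\nu$ and the strong Markov property. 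Fourth, I would check $\sigma$-finiteness: $\mu(\X_k) = \E_\nu \int_0^\sigma \mathbf{1}_{\X_k}(x_s)\,ds \leq \E_\nu \sigma^{(k)} < \infty$ where $\sigma^{(k)}$ bounds the time in $\X_k$ per cycle, which is finite by \cref{prop:logexit}; and $\mu \not\equiv 0$ since $\mu(\overline{U_0}) > 0$ because every cycle spends positive time near the reference set. Finally, in the positive recurrent case, $\mu(\X) = \E_\nu \sigma < \infty$, so normalizing gives an invariant probability measure.

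**Main obstacle.** The delicate step is the proof of invariance $\mu\mathcal{P}_t = \mu$, specifically the interchange of expectations and the use of the strong Markov property along a random cycle whose endpoint distribution $\nu$ must be exactly stationary for the embedded chain — one must be careful that $\nu$ is genuinely invariant (not merely subinvariant) and that the integrability needed to justify the cancellation of the tail integrals holds; in the merely (null) recurrent case one only gets $\sigma$-finiteness and must avoid any step that implicitly assumes $\E_\nu \sigma < \infty$. A secondary technical point is establishing the Feller/continuity property of the embedded return kernel cleanly from the results already proved (\cref{lem:approx}, \cref{thm:kol}, \cref{prop:exitprob}, \cref{thm:dirichletrev}); this is where the hypoellipticity-based regularity established earlier in the paper does the real work, replacing the ad hoc regularity claims of \cite{Kliem_87} flagged in the section introduction.
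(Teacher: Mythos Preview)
Your approach is essentially the paper's: fix two nested bounded sets, build an embedded discrete-time chain on one of the boundaries via alternating exit/entrance times, obtain an invariant probability $\nu$ for the embedded chain by Krylov--Bogolyubov on a compact state space, and define $\mu$ as the $\nu$-averaged occupation measure over one cycle. The only cosmetic difference is that the paper embeds on the inner boundary $\Gamma_2=\partial U$ while you embed on the outer boundary $\partial\X_{k^*}$. The paper also defers the invariance computation $\mu\mathcal{P}_t=\mu$ entirely to \cite{RB_06}, so your identification of that step as the ``main obstacle'' is somewhat off target --- the paper treats it as standard and puts its effort elsewhere.

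There is, however, a genuine gap in your $\sigma$-finiteness argument. You claim ``$\mu(\X_k)\leq \E_\nu \sigma^{(k)}<\infty$ \ldots\ which is finite by \cref{prop:logexit},'' but \cref{prop:logexit} only bounds the \emph{exit time} from a fixed bounded set; it says nothing about the \emph{total occupation time} in $\X_k$ over one full cycle, which in the null-recurrent case may involve arbitrarily many re-entries into $\X_k$ during a cycle of infinite mean length. The paper handles this with a separate, non-trivial argument: setting $\eta=\int_0^{\tau_{\overline{U}^c}}\mathbf{1}_K(x_s)\,ds$, one first uses recurrence, \cref{prop:exitprob}(i) and compactness of $K$ to get a uniform bound $\PP_y\{\eta\geq t_0\}\leq\alpha<1$ for all $y\in K$, and then introduces the occupation-time inverse $\eta_K(t)=\inf\{w\geq 0:\int_0^w\mathbf{1}_K(x_s)\,ds=t+t_0\}$, observes $x_{\eta_K(t)}\in K$ by path continuity, and iterates via the strong Markov property at $\eta_K(t)$ to obtain geometric decay $\PP_y\{\eta\geq n(t+t_0)\}\leq\alpha^n$. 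This occupation-clock trick is the actual technical content of the proof in the recurrent (as opposed to positive recurrent) case and cannot be replaced by a bare appeal to \cref{prop:logexit}.
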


\begin{proof}
Fix any open balls $U,V$ with $\overline{U}\subseteq V\subseteq \overline{V} \subseteq \X$. Denote 
$\Gamma_1= \partial V$ and $\Gamma_2 = \partial U$ and introduce stopping times $\sigma_i$, $i=0,1,\ldots$ defined by  
\begin{align*}
\sigma_0&=0 \,,\\
\sigma_1&= \inf\{ t\geq 0 \, : \, x_t \in \Gamma_1\} \,, \\
\sigma_2&= \inf\{ t\geq \sigma_1 \, : \, x_t \in \Gamma_2 \} \,, \\
\vdots & \qquad \vdots \\
\sigma_{2k+1} &= \inf \{ t\geq \sigma_{2k} \, : \, x_t \in \Gamma_1 \} \,, \\
\sigma_{2k+2}&= \inf\{ t \geq \sigma_{2k+1} \, : \, x_t \in \Gamma_2 \} \,. 
\end{align*}    
Since $x_t$ is a nice recurrent diffusion, each of the stopping times $\sigma_i$ is almost surely finite.
We can thus define a discrete-time Markov chain $\{X_n\}_{n \geq 0}$ on $\Gamma_2$ by $X_0=x\in \Gamma_2$ and $X_n = x_{\sigma_{2n}}$, $n\geq 1$.  
Because $\{X_n\}$ has compact state space $\Gamma_2$, it possesses an invariant probability measure $\nu$ supported on $\Gamma_2$ by the Krylov-Bogolyubov Theorem.\footnote{Here, if $Q(x,dy)$ 
denotes the one-step transition kernel of $X_n$, a probability measure is a invariant if $\mu Q= \mu$.}   Then, $\nu$ induces a measure $\mu$ on $\B$ by 
\begin{align}
\label{eqn:unnorm}
\mu(B) = \int_{\Gamma_2} \nu(dx) \E_x \int_0^{\sigma_2} \mathbf{1}_B(x_s) \, ds =: \int_{\Gamma_2} \nu(dx) \E_x \sigma^B \,,  
\end{align}
where $\sigma^B$ is the total time spent by the process $x_t$ in $B$ during one ``cycle" $[0, \sigma_2]$.  
The calculations in~\cite[staring on p. 31]{RB_06}, yield that $\mu$ is an invariant measure for $x_t$.  

To prove that $\mu$ is $\sigma$-finite, we show that for any compact set $K \subset \X$
\begin{align*}
\sup_{x\in \Gamma_2 } \E_x \sigma^{K} < \infty   \,,
\end{align*}
where 
\begin{equation}
\sigma^{K} = \int_0^{\sigma_2} \mathbf{1}_{K} (x_s) \, ds = \int_0^{\sigma_1} \mathbf{1}_{K} (x_s) \, ds + \int_{\sigma_1}^{\sigma_2} \mathbf{1}_{K} (x_s) \, ds =: 
\sigma^{K}_1 + \sigma^{K}_2 \,.
\end{equation}
Without loss of generality, by making $K$ larger, we can assume $\overline{U} \cup \overline{V} \subset K$.  

First observe that
\begin{align*}
\sup_{x\in \Gamma_2} \E_x \sigma^K \leq \sup_{x\in \overline{U}} \E_x \sigma_1 + \sup_{y\in \Gamma_1} \E_y \int_0^{\tau_{\overline{U}^c}} \mathbf{1}\{ x_s \in K \} \, ds.   
\end{align*} 
Using \cref{prop:logexit}, $\sup_{x\in \overline{U}} \E_x \sigma_1< \infty$.  For the other term, note that since $\Gamma_1 \subset K$ we have 
\begin{align*}
\sup_{y\in \Gamma_1} \E_y \int_0^{\tau_{\overline{U}^c}} \mathbf{1}\{ x_s \in K \} \, ds \leq \sup_{y \in K }  \E_y \int_0^{\tau_{\overline{U}^c}} \mathbf{1}\{ x_s \in K \} \, ds. \end{align*}
Following the arguments in~\cite{Kliem_87}, set $\eta=\int_0^{\tau_{\overline{U}^c}} \mathbf{1}\{ x_s \in K \} \, ds $ and let $A(t)= \{ \eta \geq t \}$.  By definition of $\eta$, note that $\eta \leq \tau_{\overline{U}^c}$.  Hence 
\begin{align*}
\PP_y \{A(t)\} \leq \PP_y \{ \tau_{\overline{U}^c}\geq t \}\leq \PP_y \{  \tau_{\overline{U}^c}> t/2\}.   
\end{align*}  
Thus using the fact that $x_t$ is recurrent, applying \cref{prop:exitprob}(i) and compactness of $K$ we deduce the existence of $\alpha< 1$ and large $t_0>0$ such that 
\begin{align}\label{eqn:hoa}
\PP_y \{A(t_0 )\} \leq \PP_y\{ \tau_{\overline{U}^c} \geq t_0 \}\leq  \PP_y\{ \tau_{\overline{U}^c} > t_0/2 \}  \leq \alpha
\end{align}
for all $y\in K$.  Fix such $t_0$ and define
\begin{align*}
\eta_K(t) = \inf\{ w \geq 0 \, :\, \textstyle{\int_0^{w} \mathbf{1}_K(x_s) \, ds} = t+t_0 \}.  
\end{align*}  
Then, by path continuity,  $x_{\eta_K(t)} \in K$.  Hence, by the strong Markov property and \eqref{eqn:hoa}
\begin{align*}
\PP_y\{ A(2(t+t_0)) \} &= \E_y \mathbf{1}\{ \eta \geq  2(t+t_0)\} = \E_y  \mathbf{1}\{ \eta \geq  t+t_0\} \E_{x_{\eta_K(t)}} \mathbf{1}\{ \eta \geq  t+t_0\} 
\leq \alpha^2  
\end{align*}
Repeating the process inductively yields $\PP_y \{ A(n(t+t_0))\} \leq \alpha^n$ for all $y\in K$. Finite expectation of $\eta$, hence sigma finiteness of $\mu$, now follows.

To prove that $\mu$ is a finite measure if $x_t$ is positive recurrent, note that $\sigma_1$ is almost surely finite and 
 $\mu(\X)= \int_{\Gamma_2} \nu(dx) \E_x \sigma_2$.  By the strong Markov property, for any $x\in \Gamma_2$ we have     
\begin{align*}
\E_x \sigma_2 = \E_x \sigma_1 + \E_x (\sigma_2 - \sigma_1) = \E_x \sigma_1 + \E_x \E_{x_{\sigma_1}}  \tau_{\overline{U}^c} \leq 
\sup_{x\in \Gamma_2} \E_x \sigma_1 +\sup_{y \in \Gamma_1} \E_{y}  \tau_{\overline{U}^c}.
\end{align*} 
By \cref{prop:logexit}, $\sup_{x\in \Gamma_2} \E_x \sigma_2 < \infty$.  By positive recurrence $\E_{y}  \tau_{\overline{U}^c} < \infty$ for each $y \in \X$, and by \cref{prop:expexit}(i), 
we obtain $\sup_{y \in \Gamma_1} \E_{y}  \tau_{\overline{U}^c}< \infty$, and the finiteness of 
 $\mu$ follows. Thus, $\mu$ can be normalized to an invariant probability measure,
and the proof is complete.
\end{proof}

\begin{Proposition}
\label{prop:unique}
Suppose $x_t$ is a nice diffusion.  If $x_t$ has an invariant probability measure $\tilde{\mu}$, then $\mu$ is unique and defined (cf. \eqref{eqn:unnorm}) by 
\begin{align}
\label{eqn:hatmu}
\tilde{\mu}(B)= \frac{1}{N}\int_{\Gamma_2} \nu(dx) \E_x \sigma^B \qquad \textrm{for any } B \in \mathcal{B} \,,
\end{align}   
where $N= \int_{\Gamma_2} \nu(dx) \E_x \sigma^{\X}< \infty$, and $\nu$, $\Gamma_2$ and $\sigma^A$  are as in the proof of \cref{prop:existinv}.
In addition, $\tilde{\mu}(B) > 0$ for any non-empty open $B \subset \X$. 
\end{Proposition}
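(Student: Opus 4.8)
The plan is to derive all three claims from the strict positivity of the transition density of the nonexplosive process $x_t$, together with a Kac-type argument extracting positive recurrence from the hypothesized invariant probability measure $\tilde\mu$. By \cref{thm:kol} applied with $U=\X$ (see \cref{rem:extn}), the law of $x_t$ has a smooth density $p_t(x,y)$, $(t,x,y)\in(0,\infty)\times\X\times\X$. First I would show that $p_t(x,y)>0$ for all $t>0$ and all $x,y\in\X$: this is the standard support-type argument, in which irreducibility (\cref{def:trav}) provides, for each $x$ and each neighborhood $O$ of $x$, some $s=s(y,O)>0$ with $\mathcal P_s(y,O)>0$, and the Chapman--Kolmogorov relation together with continuity and nonnegativity of the density propagates strict positivity to every $t>0$ and every pair $(x,y)$. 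Two consequences are then immediate. First, for any invariant probability measure $\tilde\mu$ and any nonempty open $B\subset\X$ we get $\tilde\mu(B)=\int_\X\tilde\mu(dx)\int_B p_t(x,y)\,dy>0$, which is the last assertion of the proposition. Second, $\tilde\mu$ has an everywhere positive Lebesgue density $y\mapsto\int_\X\tilde\mu(dx)p_t(x,y)$, so any two invariant probability measures are mutually equivalent (each being equivalent to Lebesgue measure on $\X$); by the classical Doob-type uniqueness theorem for Markov semigroups whose transition probabilities are mutually equivalent --- or, self-containedly, by a short ergodicity argument (the invariant $\sigma$-field is trivial by positivity of $p_t$, so Birkhoff's theorem identifies time averages with $\tilde\mu$-averages; see~\cite{MT_12}) --- $\tilde\mu$ is the unique invariant probability measure.

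Next I would show that $x_t$ is positive recurrent, so that the cycle measure of \cref{prop:existinv} is finite. Existence of $\tilde\mu$ excludes transience by \cref{cor:noinv}, hence $x_t$ is recurrent by \cref{prop:dicho}. Fix an open ball $U$ with $\overline U\subset\X$; since $\tilde\mu(U)>0$ by the first step, Kac's recurrence theorem applied to the $\tilde\mu$-stationary process shows that the expected hitting time of $\overline U$ is finite for $\tilde\mu$-a.e.\ starting point, hence on a dense subset of $\X\setminus\overline U$ because $\tilde\mu$ has full support. Applying \cref{prop:expexit}(ii) with the open set $\X\setminus\overline U$ playing the role of $U$ there, the map $y\mapsto\E_y\tau_{\overline U^c}$ is then smooth --- in particular finite and bounded on compact subsets --- on $\X\setminus\overline U$; by \cref{prop:logexit} and recurrence it is finite on $\overline U$ as well. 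Replacing $U$ by a slightly larger open ball $U'$ with $\overline U\subset U'$ and $\overline{U'}\subset\X$ yields $\E_y\tau_{U'^c}\le\E_y\tau_{\overline U^c}<\infty$ for all $y\in\X$, i.e.\ $x_t$ is positive recurrent.

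Finally, positive recurrence makes the measure $\mu$ in \eqref{eqn:unnorm} finite: with $U,V$ the open balls from the proof of \cref{prop:existinv} and $\sigma^{\X}=\sigma_2$ the cycle length, one has $\mu(\X)=\int_{\Gamma_2}\nu(dx)\,\E_x\sigma^{\X}$ with $\E_x\sigma^{\X}=\E_x\sigma_1+\E_x\E_{x_{\sigma_1}}\tau_{\overline U^c}$ bounded over $x\in\Gamma_2$ by \cref{prop:logexit} and the previous step. Since $\mu$ is a nontrivial invariant measure (\cref{prop:existinv}), $N:=\mu(\X)\in(0,\infty)$ and $\mu/N$ is an invariant probability measure, so the uniqueness from the first step forces $\tilde\mu=\mu/N$, which is exactly formula \eqref{eqn:hatmu}. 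The main obstacle is the positive-recurrence step: extracting finiteness of the expected cycle length from the bare existence of $\tilde\mu$ needs the Kac-type identity combined with the regularity bootstrap of \cref{prop:expexit}(ii), and the strict positivity of $p_t$ in the first step, although standard, also requires care since the support argument genuinely uses both irreducibility and the smoothness of the density.
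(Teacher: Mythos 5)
There is a genuine gap in your first step: strict positivity of $p_t(x,y)$ for all $t>0$ and all $x,y\in\X$ does not follow from irreducibility plus Chapman--Kolmogorov as sketched. Irreducibility in the sense of \cref{def:trav} is a time-integrated statement: for each pair $(x,y)$ and each $\delta>0$ it produces \emph{some} $s=s(x,y,\delta)$ with $\mathcal P_s(x,B_\delta(y))>0$, but it says nothing about the support of $p_t(x,\cdot)$ at a prescribed time $t$. Writing $p_t(x,y)=\int p_s(x,z)\,p_{t-s}(z,y)\,dz$ you would need $p_{t-s}(\cdot,y)$ to be positive on a set that overlaps $\{p_s(x,\cdot)>0\}$, which is exactly the same kind of fixed-time positivity you set out to prove; the argument does not close. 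Global strict positivity of a hypoelliptic transition density genuinely requires a Stroock--Varadhan support theorem or a Malliavin-type positivity criterion, neither of which is in the paper's toolkit. Because your uniqueness argument (mutual equivalence of any two invariant probability measures, hence triviality of the invariant $\sigma$-field) relies on this strict positivity, it inherits the gap. The paper's proof is deliberately more modest and avoids it: $L^*\hat\mu=0$ and hypoellipticity of $L^*$ give $\hat\mu$ a continuous density $\hat\rho$ which is $\geq c>0$ on \emph{some} open $W$; irreducibility plus continuity of $w\mapsto\mathcal P_t(w,B)$ (for a single well-chosen $t$) then gives $\hat\mu(B)>0$ for every nonempty open $B$. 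Since $\hat\rho$ is continuous, this forces $\{\hat\rho>0\}$ to be a dense open set, and two mutually singular ergodic invariant probability measures would have disjoint dense open positivity sets --- a contradiction. That weaker conclusion suffices for both the positivity and uniqueness assertions.

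Your finiteness step is a genuinely different route from the paper's, and (modulo the uniqueness fix above) it can be made to work, but it carries an extra subtlety. You pass through positive recurrence via a continuous-time Kac-type theorem, then invoke \cref{prop:expexit}(ii), \cref{prop:logexit}, and the cycle bound to get $\mu(\X)<\infty$. The Kac statement you need --- finiteness of the expected \emph{hitting} time of a set of positive $\tilde\mu$-measure from $\tilde\mu$-a.e.\ starting point, not merely the expected \emph{return} time from within the set --- requires an additional cycle argument of exactly the sort carried out in~\cite{AZ_66}, and should be cited or proved rather than asserted. The paper instead bounds $\mu(\X)$ directly: Birkhoff's ergodic theorem gives $\frac1T\int_0^T\mathbf 1_K(x_s)\,ds\to\hat\mu(K)$ a.e., and Fatou together with invariance of $\mu$ yields $\mu(K)\geq\mu(\X)\hat\mu(K)$, whence $\mu(\X)<\infty$ by $\sigma$-finiteness. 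This is shorter and does not route through \cref{prop:expexit}(ii). Your final normalization and identification $\tilde\mu=\mu/\mu(\X)$ is correct and matches the paper.
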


\begin{proof}
Suppose $\hat{\mu}$ is an invariant probability measure and fix a non-empty, open set $B \subset  \X $.    
We may assume without loss of generality that $B$ is bounded.  We first claim that $\hat{\mu}(B) >0$. 
Since any invariant probability measure is a solution of $L^*\hat{\mu}=0$ on $\X$ in the sense of distributions, by hypoellipticity of $L^*$ on $\X$,
$\hat{\mu}$ has a continuous probability density $\hat{\rho}$ with 
respect to Lebesgue measure on $\X$.   Since $\hat{\mu}(\X)=1$, then $\hat{\rho} \geq c >0$ on some bounded open set $W\subset \X$ and some constant $c>0$.  
By \cref{def:anice}(iii) and path continuity, for
any $x\in W$ there exists $t>0$ with $\mathcal{P}_t(x, B) >0$. By applying \cref{rem:extn} to \cref{thm:kol}, $x \mapsto \mathcal{P}_t(x, B)$ is continuous, and therefore $\mathcal{P}_t(x,B) \geq c'>0$ for all  $x$ in an open 
subset $W'$ of $W$.  Then, 
\begin{align*}
\hat{\mu}(B) = \int_{\X} \hat{\mu}(dx) \mathcal{P}_t(x, B) \geq \int_{W'} \hat{\mu}(dx) \mathcal{P}_t(x, B) \geq cc'|W'|>0  \,,
\end{align*} 
where $|W'|$ is the Lebesgue measure of $W'$.  This finishes the proof of the claim.     

If there is more than one invariant probability measure, then by standard arguments we can choose two distinct ergodic invariant measures, which are in particular 
mutually singular. But we showed by the claim above that $\X$ belongs to the support of any invariant probability measure, and therefore such measure is unique. 

Since $x_t$ has an invariant probability measure $\hat{\mu}$, \cref{cor:noinv} implies that $x_t$ is recurrent and  by \cref{prop:existinv}, $\mu$ in~\eqref{eqn:unnorm} 
 is $\sigma$-finite.

Since $\hat{\mu}$ is the unique invariant probability measure, it is ergodic and  we proved that the support of $\hat{\mu}$ is $\X$. Then, 
by Birkhoff's Ergodic Theorem, for any compact set $K\subset \X$
\begin{align}
\label{eqn:ergodic}
\lim_{T \to \infty} \frac{1}{T}\int_0^T \mathbf{1}_K(x_t) \, dt = \hat{\mu}(K)
\end{align}  
for almost every $x\in \textrm{supp}(\hat{\mu}) = \X$. 
Hence, by the invariance of $\mu$, for any compact set $K\subset \X$ and $t \geq 0$, we have  
\begin{align*}
\mu(K) =  \int_{\X} \E_x \mathbf{1}_K(x_t) \mu(dx) =
\lim_{T\rightarrow \infty} \int_{\X} \mu(dx) \E_x \frac{1}{T}\int_0^T \mathbf{1}_K(x_s)  \, ds  \geq \mu(\X) \hat{\mu}(K) \,,
\end{align*}   
where the last inequality follows from Fatou's lemma and~\eqref{eqn:ergodic}.  
Since there exists a compact $K\subseteq \X$ so that $\hat{\mu}(K)>0$ and by $\sigma$-finiteness $\mu(K)< \infty$ we have shown that $\mu(\X) < \infty$.  

After normalization, $\mu$, and therefore $\tilde{\mu}$, is a well defined invariant probability measure and, by uniqueness, $\tilde{\mu} = \hat{\mu}$, as desired. 
\end{proof}

\begin{Remark}
Note that one can choose arbitrarily the sets $U, V$, $\Gamma_1=\partial V$ and $\Gamma_2=\partial U$ in the definition of $\mu$
in~\eqref{eqn:unnorm}, as long as $\overline{U}\subseteq V\subseteq \overline{V} \subseteq \X$ and $U,V$ are  nonempty bounded, open with smooth boundaries $\Gamma_1, \Gamma_2$, respectively.  
By  \cref{prop:unique}, different choices of $U$ and $V$ induce the same measure $\mu$ up to a normalization constant.  
\end{Remark}

The final result in this section establishes that in our context, the existence of an invariant probability measure implies that $x_t$ is positive recurrent.  

\begin{Theorem}
Suppose $x_t$ is a nice diffusion.  Then, 
$x_t$ has an invariant probability measure if and only if $x_t$ is positive recurrent.  
\end{Theorem}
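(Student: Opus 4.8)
The plan is to prove the two implications of the biconditional separately. That positive recurrence implies the existence of an invariant probability measure is exactly the second assertion of \cref{prop:existinv}, so there is nothing to add. The substance of the theorem is the converse: assuming $x_t$ has an invariant probability measure $\tilde{\mu}$, I would show that $x_t$ is positive recurrent.

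First, since $x_t$ has an invariant probability measure, \cref{cor:noinv} rules out transience, and the dichotomy of \cref{prop:dicho} forces $x_t$ to be recurrent. Next, following the proof of \cref{prop:unique}, set up the cycle construction of \cref{prop:existinv} with open balls $\overline{U_0}\subseteq V_0\subseteq\overline{V_0}\subseteq\X$, boundaries $\Gamma_2=\partial U_0$ and $\Gamma_1=\partial V_0$, Krylov--Bogolyubov measure $\nu$ on $\Gamma_2$, stopping times $\sigma_1,\sigma_2,\dots$, and cycle measure $\mu$ as in \eqref{eqn:unnorm}. By \cref{prop:unique}, $\mu=N\tilde{\mu}$ with
\[
N=\int_{\Gamma_2}\nu(dx)\,\E_x\sigma^{\X}=\int_{\Gamma_2}\nu(dx)\,\E_x\sigma_2<\infty ,
\]
since $\sigma^{\X}=\sigma_2$ is the length of one cycle. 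Writing $\sigma_2=\sigma_1+(\sigma_2-\sigma_1)$ and invoking the strong Markov property at $\sigma_1$ --- together with the observation that $x_{\sigma_1}\in\partial V_0$ lies outside $\overline{U_0}$, so $\sigma_2-\sigma_1$ equals the first hitting time of $\overline{U_0}$ for the process restarted at $x_{\sigma_1}$ --- gives $\E_x\sigma_2=\E_x\sigma_1+\E_x h(x_{\sigma_1})$, where $h(y):=\E_y\tau_{\overline{U_0}^c}$ for $y\in\X\setminus\overline{U_0}$ and $h\equiv 0$ on $\overline{U_0}$. Since $\sigma_1\le\tau_{\overline{V_0}}$, \cref{prop:logexit} applied to $V_0$ yields $\sup_{x\in\overline{V_0}}\E_x\sigma_1<\infty$, hence $\int_{\Gamma_2}\nu(dx)\,\E_x h(x_{\sigma_1})\le N<\infty$. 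In particular there exists $y_0\in\X\setminus\overline{U_0}$ with $h(y_0)<\infty$.

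It remains to upgrade finiteness of $h$ at $y_0$ to finiteness everywhere on $\X\setminus\overline{U_0}$. For $t>0$, decomposing $\tau_{\overline{U_0}^c}=(\tau_{\overline{U_0}^c}\wedge t)+(\tau_{\overline{U_0}^c}-t)^+$ and applying the Markov property gives
\[
h(y_0)\ \ge\ \E_{y_0}\big[\,h(x_t)\,\mathbf{1}\{\tau_{\overline{U_0}^c}>t\}\,\big]\ =\ \int_{\X\setminus\overline{U_0}}h(z)\,\tilde{p}_t(y_0,z)\,dz ,
\]
where $\tilde{p}_t$ is the smooth density on $\X\setminus\overline{U_0}$ of the process stopped at $\partial U_0$, furnished by \cref{thm:kol} (its hypotheses following from \hyperlink{(PH)}{{\bf(PH)}}, with the unbounded domain $\X\setminus\overline{U_0}$ treated as in the proof of \cref{prop:posrec} using \hyperlink{(NE)}{{\bf(NE)}}). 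Choosing $U_0$ so that $\X\setminus\overline{U_0}$ is connected, the parabolic H\"{o}rmander condition together with irreducibility (via the support theorem) makes $\tilde{p}_t(y_0,\cdot)$ strictly positive there, so the displayed inequality forces $h<\infty$ on a dense subset of $\X\setminus\overline{U_0}$. Since recurrence gives $\PP_x\{\tau_{\overline{U_0}^c}<\infty\}=1$ for every $x$, \cref{prop:expexit}(ii), applied with domain $\X\setminus\overline{U_0}$, shows $h\in C^\infty(\X\setminus\overline{U_0})$; in particular $\E_y\tau_{\overline{U_0}^c}<\infty$ for all $y\in\X$. This is positive recurrence with the ball $U_0$, and \cref{prop:posrec}(ii) upgrades it to $\E_y\tau_{V^c}<\infty$ for all $y\in\X$ and every bounded open $V$ with $\overline{V}\subseteq\X$.

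I expect the final step to be the main obstacle: the cycle identity only produces $\E_x\sigma_2<\infty$ for $\nu$-almost every $x$, not a uniform bound over $\Gamma_2$, so one genuinely obtains finiteness of $h$ at only a single point and must lean on the hypoelliptic smoothing (\cref{thm:kol}, \cref{prop:expexit}) and the strict positivity of the stopped transition density to spread it over $\X\setminus\overline{U_0}$. Secondary technical points are the legitimacy of applying \cref{thm:kol} and \cref{prop:expexit}(ii) on the unbounded set $\X\setminus\overline{U_0}$ --- valid because $x_t$ never leaves $\X$ by \hyperlink{(NE)}{{\bf(NE)}}, just as in the proof of \cref{prop:posrec} --- and the choice of $U_0$ guaranteeing connectedness of $\X\setminus\overline{U_0}$.
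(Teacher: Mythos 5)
Your forward implication and the first half of the converse (recurrence via \cref{cor:noinv}, finiteness of $N$, the strong Markov decomposition $\E_x\sigma_2=\E_x\sigma_1+\E_x h(x_{\sigma_1})$, and hence $h(y_0)<\infty$ for at least one $y_0$) are sound. The gap is in the spreading step. You claim that the parabolic H\"{o}rmander condition together with irreducibility makes $\tilde p_t(y_0,\cdot)$ strictly positive on $\X\setminus\overline{U_0}$ for a \emph{fixed} $t$. This is not available from the tools in the paper and is false in general: irreducibility in the sense of \cref{def:trav} only guarantees that the process reaches a neighborhood of a given point at \emph{some} finite time (depending on the target), not at a prescribed time, and \cref{ex:non} already exhibits a diffusion satisfying the parabolic H\"{o}rmander condition whose time-$t$ law is supported in a half-space (the first coordinate is strictly decreasing), so its density vanishes on large open sets. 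Matters are worse for the \emph{stopped} density: even allowing $t$ to vary, the process started at $y_0$ may be unable to reach a given open set $B\subset\X\setminus\overline{U_0}$ without first entering $U_0$, in which case $\tilde{\mathcal{P}}_t(y_0,B)=0$ for all $t$ and your inequality says nothing about $h$ on $B$. So finiteness of $h$ at the single point $y_0$ does not propagate to a dense set, and the hypothesis of \cref{prop:expexit}(ii) is not verified.

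The paper avoids this by never trying to spread from one point. Instead, for each target $x\in\X\setminus\overline{V}$ and each $\epsilon>0$ it uses the full-support property of $\tilde\mu$ established in \cref{prop:unique} ($\tilde\mu(B_{\epsilon/2}(x))>0$, which comes from hypoellipticity of $L^*$ applied to the invariant density, not from positivity of the transition density) to find a point $x_*$ on $\Gamma_2$ with $\E_{x_*}\sigma_2<\infty$ and $\PP_{x_*}\{\eta<\sigma_2\}>0$, where $\eta$ is the hitting time of $\overline{B_{\epsilon/2}(x)}$; the strong Markov property at $\eta$ then yields some $y\in B_\epsilon(x)$ with $\E_y\tau_{\overline{V}^c}<\infty$. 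Repeating this at every $x$ and $\epsilon$ gives finiteness on a dense set directly, after which \cref{prop:expexit}(ii) applies exactly as you intended. Replacing your density-positivity argument with this localization via the support of $\tilde\mu$ would close the gap.
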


\begin{proof}
If $x_t$ is positive recurrent, then there is an invariant probability measure, which is moreover unique, according to \cref{prop:existinv} and~\cref{prop:unique}. 

Conversely, if $x_t$ has an invariant probability measure $\tilde{\mu}$, then it is unique and given by~\eqref{eqn:hatmu}.  
Fix an open, non-empty, bounded set $V \subset \X$ with $\overline{V}\subset \X$. 
It suffices to show that $\E_x \tau_{\overline{V}^c}< \infty$ for all $x$ in a dense set $D \subset \X\setminus \overline{V} $, because then recurrence combined with \cref{prop:expexit}(ii) implies $\E_x \tau_{\overline{V}^c}< \infty$ for all $x \in \X\setminus \overline{V}$ and the result follows.

 Fix $x\in \X\setminus \overline{V}$ and $\epsilon >0$ such that $B_{\epsilon}(x)\subset\X\setminus \overline{V}$.  
 By hypothesis and \eqref{eqn:unnorm}, $\tilde{\mu}(\X)=1< \infty$, which implies $\E_y \sigma_2 < \infty$, for every $y\in \Gamma_2' \subset \Gamma_2$, where $\nu(\Gamma_2') =1$ .   
 By  \cref{prop:unique} one has $\tilde{\mu}(B_{\epsilon/2}(x))>0$, and there exists $x_* \in \Gamma_2'$ for which $\PP_{x_*}\{ \tau_{B^c_{\epsilon/2}(x)} < \sigma_2 \} >0$.  Denoting for simplicity $\eta = \tau_{B^c_{\epsilon/2}(x)}$,  the strong Markov property implies
\begin{align}
\infty &> \E_{x_*} \sigma_2 \geq \E_{x_*} \mathbf{1}\{ \eta <  \sigma_2\} \sigma_2 \\
&= \E_{x_*} \eta \mathbf{1}\{ \eta <  \sigma_2\} + 
\E_{x_*} \mathbf{1}\{ \eta <  \sigma_2\} \E_{x_*}[(\sigma_2 - \eta)| \mathcal{F}_{\eta}] 
\geq \E_{x_*} \mathbf{1}\{ \eta <  \sigma_2\} \E_{x_\eta} \tau_{\overline{V}^c}. 
\end{align}     
Since $\PP_{x_*} \{ \eta < \sigma_2 \} >0$, there exists $y\in B_{\epsilon}(x)$ for which $\E_y \tau_{\overline{V}^c} < \infty$.  This finishes the proof.  
\end{proof}

\begin{Remark}
\label{rem:gaps}
We indicate a gap in the proof of~\cite[Lemma~3.7]{Kliem_87} and note some missing details in~ \cite[Lemma~4.4]{Kliem_87}.  

The result~~\cite[Lemma~3.7]{Kliem_87} is crucially used   to establish the dichotomy for transient and recurrent points (see~\cite[Theorem~3.2]{Kliem_87}). 
 In our notation, the context of the argument is as follows.  It is assumed that for \emph{fixed} $x$, $\int_0^\infty \mathcal{P}_t(x, V) \, dt < \infty$ for some open neighborhood $V$ of $x$.  It is then claimed that 
\begin{align*}
\limsup_{y\to x}\int_0^\infty \mathcal{P}_t(y, V) \, dt\leq \int_0^\infty \limsup_{y\to x} \mathcal{P}_t(y, V) \, dt \leq \int_0^\infty \mathcal{P}_t(x, V) \, dt< \infty.\end{align*}
Thus it then follows that there exists an open neighborhood of $x$ where the integral is finite.  To the best of our knowledge, the limit-integral exchange was not justified and we could not find a
 simple solution.  In essence, our fix of the arguments in~\cite{Kliem_87} were presented this section.    

In addition, there are missing details in the proof of \cite[Lemma~4.4]{Kliem_87}.  Indeed it is claimed that Bony's form of the Harnack inequality applies without the assumptions made in Bony's original paper~\cite[Theorem~7.1]{Bony_69}.  Although we have seen that this is indeed true as claimed (see~\cref{thm:harnack}), it requires some nontrivial arguments 
like those presented in this paper.  Similarly, the claim in~\cite[Lemma~4.4]{Kliem_87}  is effectively~\cref{prop:expexit}.          
\end{Remark}

\subsubsection*{Acknowledgments}
J.F. and D.P.H. graciously acknowledge support from National Science Foundation grants DMS-1816408 (J.F.) and DMS-1855504 (D.P.H.).  We also acknowledge fruitful conversations on the topic of this paper with Nathan Glatt-Holtz.

\newcommand{\etalchar}[1]{$^{#1}$}

\end{document}